\newtheorem{thm}{\protect\theoremname}[section]
\newtheorem*{thm*}{\protect\theoremname}
  \theoremstyle{plain}
  \newtheorem{prop}[thm]{\protect\propositionname}
  \theoremstyle{definition}
  \newtheorem{example}[thm]{\protect\examplename}
  \theoremstyle{plain}
  \newtheorem{cor}[thm]{\protect\corollaryname}
  \theoremstyle{definition}
  \newtheorem{defn}[thm]{\protect\definitionname}
  \theoremstyle{remark}
  \newtheorem{rem}[thm]{\protect\remarkname}
  \theoremstyle{plain}
  \newtheorem{lem}[thm]{\protect\lemmaname}
\title{Root data in character varieties}
\subjclass[2020]{Primary: 14M35. Secondary: 14L24, 14D20, 14C15, 17B22}
\keywords{Character varieties, Root data, Geometric Invariant Theory, Langlands duality.}
\author[\'A. Gonz\'alez-Prieto]{\'Angel Gonz\'alez-Prieto}
 \address{Departamento de
  \'Algebra, Geometr\'ia y Topolog\'ia, Facultad de Ciencias Matemáticas, Universidad Complutense de Madrid, Plaza Ciencias 3, 28040 Madrid, Spain}
  \address{Instituto de Ciencias Matem\'aticas (CSIC-UAM-UCM-UC3M), C/ Nicol\'as Cabrera 13-15, 28049 Madrid, Spain}
  \email{angelgonzalezprieto@ucm.es}
\author[A. Zamora]{Alfonso Zamora}
 \address{Departamento de Matemática Aplicada a las TIC, ETSI Informáticos, Universidad Politécnica de Madrid, Campus de Montegancedo, 28660 Madrid, Spain}
  \email{alfonso.zamora@upm.es}
\renewcommand{\hom}{\mathrm{Hom}}
\def\quot{/\!\!/}
\def\sym{\mathsf{Sym}}
\newcommand{\PP}{\mathbb{P}}
\newcommand{\ZZ}{\mathbb{Z}}
\newcommand{\CC}{\mathbb{C}}
\newcommand{\GG}{\mathbb{G}}
\newcommand{\FF}{\mathbb{F}}
\newcommand{\Spec}{\operatorname{Spec}}
\newcommand{\Hom}{\operatorname{Hom}}
\newcommand{\GL}{\operatorname{GL}}
\newcommand{\SL}{\operatorname{SL}}
\newcommand{\SO}{\operatorname{SO}}
\newcommand{\PGL}{\operatorname{PGL}}
\newcommand{\Sp}{\operatorname{Sp}}
\newcommand{\KVar}{\textup{K}\textbf{\textup{Var}}}
\providecommand{\corollaryname}{Corollary}
  \providecommand{\definitionname}{Definition}
  \providecommand{\examplename}{Example}
  \providecommand{\lemmaname}{Lemma}
  \providecommand{\propositionname}{Proposition}
  \providecommand{\remarkname}{Remark}
\providecommand{\theoremname}{Theorem}
\begin{document}

\begin{abstract}
Given $G$ an algebraic reductive group over an algebraically closed field of characteristic zero and $\Gamma$ a finitely generated group, we provide a stratification of the $G$-character variety of $\Gamma$ in terms of conjugacy classes of parabolic subgroups of $G$. Each stratum has the structure of a pseudo-quotient, which is a relaxed GIT notion capturing the topology of the quotient and, therefore, behaving well for motivic computations of invariants of the character varieties. These stratifications are constructed by analyzing the root datum of $G$ to encode parabolic classes. Finally, detailed and explicit motivic formulae are provided for cases with Dynkin diagram of types $A$, $B$, $C$ and $D$. 
\end{abstract}

\maketitle

\begin{center}
    \textit{Dedicated to the memory of Prof.\ Peter E.\ Newstead.}
\end{center}

\section{Introduction}

Let $G$ be an algebraic reductive group and let $\Gamma = \langle \gamma_1, \ldots, \gamma_s\,\mid r_{\alpha}(\gamma_1, \ldots, \gamma_s) = 1\rangle$ be a finitely generated group, where $\gamma_1, \ldots, \gamma_s$ are the generators and $r_{\alpha}$ are the relations satisfied by these generators.
Denote by $\mathcal{R}_G(\Gamma)=\hom(\Gamma,G)$ the set of group homomorphisms $\rho: \Gamma \to G$. This set has the natural structure of an affine algebraic variety given by the natural identification with the algebraic set
$$
    \mathcal{R}_G(\Gamma) \cong \left\{(g_1, \ldots, g_s) \in G^s\,\mid\, r_{\alpha}(g_1, \ldots, g_s) = 1\right\}.
$$
With the algebraic structure described above, the variety
$$
    \mathcal{R}_G(\Gamma) = \hom(\Gamma,G)
$$
is known as the \emph{$G$-representation variety} of $\Gamma$.     
The geometry of these representation varieties has been deeply studied in the literature, specially when $\Gamma = \pi_1(M)$ is the fundamental group of a certain manifold $M$, as in the case of surfaces \cite{hausel2008mixed,logares2013hodge,mereb2015polynomials,martinez2016polynomials,gonzalez2020virtual} or $3$-dimensional manifolds \cite{munoz2016geometry,gonzalez2022motive,gonzalez2023representation}. It has also been studied for nilpotent groups \cite{florentino2024mixed}, among others.

Despite its importance, the representation variety only parametrizes the whole set of representations, without identifying isomorphic ones. To address this issue, we consider the action of the group $G$ on $\mathcal{R}_G(\Gamma)$
by conjugation of representations, i.e.\ $(g \cdot \rho)(\gamma) = g \rho(\gamma)g^{-1}$ for $g \in G$, $\rho \in \mathcal{R}_G(\Gamma)$ and $\gamma \in \Gamma$. 
In this way we can define the Geometric Invariant Theory (GIT) quotient    
\[
\mathcal{X}_G(\Gamma) =\mathcal{R}_G(\Gamma)\quot G,
\]
which is an algebraic variety known as the \emph{$G$-character variety} of $\Gamma$.

Since $\mathcal{X}_G(\Gamma)$ parametrizes isomorphism classes of representations, this space is also known as the moduli space of representations of $\Gamma$
into $G$ or the Betti moduli space in the context of non-abelian Hodge correspondence \cite{corlette1988flat,simpson1992higgs,simpson1994moduliI,simpson1994moduliII}, in particular in the celebrated $P=W$ conjecture \cite{hausel2022p,maulik2022p}. These character varieties play a central role in modern geometry, as the wide literature about them shows, such as \cite{hausel2011arithmetic,de2012topology,baraglia2017arithmetic,gonzalez2022motive,mellit2020poincare} for surfaces, \cite{culler1983varieties,gonzalez2024character} for $3$-manifolds, or \cite{lawton2008minimal,lawton2009poisson,florentino2009topology,florentino2012singularities,florentino2014topology,lawton2016polynomial,casimiro2016topology,florentino2021serre,florentino2021hodge} for free (possibly abelian) groups, among many other works. 

In the case when $G = \GL_n$, the homomorphisms $\rho: \Gamma \to \GL_n$ are genuine linear representations of rank $n$, which allows us to use some linear algebra tools to understand the character variety, such as reducibility and semisimplicity. Recall that a representation $\rho$ is said to be reducible if there exists a proper subspace $0 \neq V \subsetneq \CC^n$ such that $\rho(\gamma)(V) \subseteq V$ for all $\gamma \in \Gamma$; otherwise we say that $\rho$ is irreducible. Irreducible representations are very well-behaved, for instance, their isotropy group for the conjugacy action is the smallest possible one, and they are stable points for the GIT quotient.

Furthermore, in this case $G=\GL_{n}$, the whole character variety can be understood in terms of semisimple representations, i.e.\ direct sums of irreducible representations. This natural stratification of $\mathcal{X}_{\GL_n}(\Gamma)$ was constructed in \cite{FNZ2023generating} and is indexed by what the authors call the `partition type'.
Let us briefly review it to understand the flavour of the results to come. A partition of $n\in\mathbb{N}$ is
denoted by $[k]=[1^{k_{1}}\cdots j^{k_{j}}\cdots n^{k_{n}}]$, where
the exponent $k_{j}$ means that $[k]$ has $k_{j}\geq0$ parts of
size $j\in\{1,\ldots,n\}$, so that $n=\sum_{j=1}^{n}j\cdot k_{j}$. For example, $[1^{3}\,2\, 4]$ is the partition $9=1\cdot 3+2+4$.
The set of partitions of $n$ will be denoted by $\mathcal{P}_{n}$. In this way, we say that $\rho\in\mathcal{R}_{\GL_n}(\Gamma)$
is {$[k]$-polystable} if $\rho$ is conjugate to a representation of the form
\begin{equation*}
\bigoplus_{j=1}^{n}\rho_{j}\label{eq:polystable-type}
\end{equation*}
where each $\rho_{j}$ is a direct sum of $k_{j} \geq 0$ irreducible
representations of $\mathcal{R}_{\GL_{j}}(\Gamma)$, for $j=1,\ldots,n$. We denote $[k]$-polystable representations by
$\mathcal{R}^{[k]}_{\GL_n}(\Gamma) \subseteq \mathcal{R}_{\GL_n}(\Gamma)$ and analogously
for their equivalence classes under conjugation by $\mathcal{X}^{[k]}_{\GL_n}(\Gamma)\subseteq \mathcal{X}_{\GL_n}(\Gamma)$.

It can be proven that each stratum $\mathcal{R}^{[k]}_{\GL_n}(\Gamma)$ is polystable (i.e.\ their orbits in the GIT quotient are closed) and actually the union of these strata is the polystable locus of $\mathcal{R}_{\GL_n}(\Gamma)$. Furthermore, the stable GIT locus of $\mathcal{R}_{\GL_n}(\Gamma)$ is precisely the open stratum $\mathcal{R}^{[n]}_{\GL_n}(\Gamma) = \mathcal{R}^{\ast}_{\GL_n}(\Gamma)$ of irreducible representations, corresponding to the trivial partition $[n] = [n^{1}]$. With this information, in \cite[Proposition 4.3]{FNZ2023generating} the authors proved that the character variety $\mathcal{X}_{\GL_n}(\Gamma)$ can be written as a
disjoint union, labelled by partitions $[k]\in\mathcal{P}_{n}$, of
locally closed quasi-projective varieties of $[k]$-polystable equivalence
classes
\begin{equation}\label{eq:decomposition-gl}
\mathcal{X}_{\GL_n}(\Gamma)=\bigsqcup_{[k]\in\mathcal{P}_{n}}\mathcal{X}^{[k]}_{\GL_n}(\Gamma).
\end{equation}

This decomposition can be better understood through Levi subgroups. For each partition $[k]\in\mathcal{P}_{n}$, denote by $L_{[k]}$ the reductive subgroup 
\begin{equation*}
L_{[k]} =\GL_{1}^{k_{1}}\times\cdots\times \GL_{n}^{k_{n}}\subseteq \GL_{n},\label{eq:k-Levi}
\end{equation*}
which we call the $[k]$-Levi of $\GL_{n}$. In fact, all Levi subgroups
of $\GL_{n}$ are conjugate to one obtained in this way. In this way, the $[k]$-polystable representations $\mathcal{R}^{[k]}_{\GL_n}(\Gamma)$ are exactly the representations $\rho: \Gamma \to L_{[k]}$ that are irreducible as $L_{[k]}$-representations. Furthermore, if we consider the group $N_{[k]} = S_{[k]} \rtimes L_{[k]}$, where $S_{[k]} = S_{k_1} \times \ldots \times S_{k_n} \subseteq S_n$ acts on $L_{[k]}$ by permutation of blocks of equal size, we have that $N_{[k]}$ is exactly the $\GL_n$-normalizer of $L_{[k]}$. In this manner, we can actually identify
$$
\mathcal{X}^{[k]}_{\GL_n}(\Gamma) \cong \mathcal{R}^{[k]}_{\GL_n}(\Gamma)\quot (L_{[k]}\rtimes S_{[k]}) = \left(\prod_{j=1}^{n}\mathcal{X}^{\ast}_{\GL_j}(\Gamma)^{k_{j}}\right)/S_{[k]},
$$
where $S_{[k]} = N_{[k]} / L_{[k]}$ can be though as some sort of Weyl group associated to the partition $[k]$.

The goal of this work is to generalize the decomposition (\ref{eq:decomposition-gl}) to a general reductive group $G$ over an algebraically closed field of characteristic zero. To this aim, we shall need to rephrase many of the statements above in an intrinsic way not depending on the linear embedding of $G$. In this direction, it turns out that the theory of parabolic and Levi subgroups of $G$ provides the perfect framework to set these ideas.

Once we fix a Borel subgroup $B \subseteq G$ and a maximal torus $T \subseteq G$, this defines a finite set of simple roots $\Delta$ of $G$, and subsets $I \subseteq \Delta$ are in correspondence with standard parabolic subgroups $P_I \subseteq G$, i.e.\ parabolic subgroups containing $B$, as well as subtori $T_I \subseteq T$. The Levi subgroup $L_I$ associated to $P_I$ is thus the centralizer of $T_I$, and is a reductive group whose root system is generated by the roots of $I$. Attached to this subset we also find the normalizer $N_I$ of $T_I$ and the Weyl group of $I$, $W_I := N_I / L_I$.

In this spirit, we will say that a representation $\rho: \Gamma \to G$ of $G$ is {reducible} if there exists a proper parabolic subgroup $P \subsetneq G$ such that $\rho(\Gamma) \subseteq P$; and irreducible otherwise. The set of irreducible representations forms an open set $\mathcal{R}_G^{\ast}(\Gamma) \subseteq \mathcal{R}_G(\Gamma)$. Taking into account that parabolic subgroups of $\GL_n$ are in correspondence with stabilizers of flags of $\CC^n$ (see Section \ref{sec:parabolic-stratification} for details) this definition of irreducibility can be seen as a generalization of the classical notion for a general reductive group $G$. In this form, we can consider $\mathcal{R}_{L_I}^\ast(\Gamma) \subseteq \mathcal{R}_G(\Gamma)$  to be the collection of representations $\rho$ with $\rho(\Gamma) \subseteq L_I$ and such that $\rho$ is irreducible as an $L_I$-representation. 

With these notions, the main result of this paper is the following decomposition. Here, we will work on the Grothendieck ring $\KVar$ of algebraic varieties, generated by isomorphism classes $[X]$ of algebraic varieties, called virtual classes, modulo cut-and-paste relations.

\begin{thm*}[Corollary \ref{cor:main-result-good-situation}]
Let $G$ be a reductive group and let $\Gamma$ be any finitely generated group. Suppose that, for any subset $I \subseteq \Delta$ of simple roots we can decompose the normalizer as a semidirect product $N_I = L_I \rtimes W_I$ of the Levi and the Weyl groups of $I$. Then, we have that the virtual class in $\KVar$ of the character variety $\mathcal{X}_G(\Gamma)$ can be written as
\begin{equation}\label{eq:decomposition-intro}
    \left[\mathcal{X}_G(\Gamma)\right] = \sum_{I} \left[\mathcal{X}_{L_I}^\ast(\Gamma) \sslash W_I\right],
\end{equation}
where the sum runs over a collection of subsets $I \subseteq \Delta$ that are independent under the action of the Weyl group of $G$.
\end{thm*}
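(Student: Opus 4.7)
The plan is to partition $\mathcal{X}_G(\Gamma)$ according to the $G$-conjugacy class of the minimal Levi subgroup $L_I$ into which each polystable class can be conjugated, identify the corresponding stratum with $\mathcal{X}_{L_I}^\ast(\Gamma) \sslash W_I$, and finally sum up in $\KVar$ using additivity on locally closed decompositions.

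For the construction of the strata, I would first build, for each $I \subseteq \Delta$, a natural morphism
\begin{equation*}
\Phi_I : \mathcal{R}_{L_I}^\ast(\Gamma) \longrightarrow \mathcal{X}_G(\Gamma)
\end{equation*}
by composing $\mathcal{R}_{L_I}(\Gamma) \hookrightarrow \mathcal{R}_G(\Gamma)$ with the GIT quotient map. The crucial observation is that if $\rho$ is $L_I$-irreducible, then any $g \in G$ such that $g\rho g^{-1}$ still lands in $L_I$ must normalize $L_I$, so $g \in N_I$. The hypothesis $N_I = L_I \rtimes W_I$ then makes the conjugation action of $N_I$ descend to a genuine action of $W_I$ on $\mathcal{X}_{L_I}^\ast(\Gamma)$, and $\Phi_I$ factors through an injection $\overline\Phi_I : \mathcal{X}_{L_I}^\ast(\Gamma) \sslash W_I \hookrightarrow \mathcal{X}_G(\Gamma)$ whose image I denote by $\mathcal{X}_G^I(\Gamma)$.

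Next I would establish that every polystable class lies in exactly one stratum $\mathcal{X}_G^I(\Gamma)$, with $[I]$ running over $W(G)$-orbits of subsets of $\Delta$. \emph{Existence:} given a polystable $\rho$, pick a parabolic $P$ of minimal dimension containing $\rho(\Gamma)$; polystability provides a reductive stabilizer and allows a $G$-conjugation into the Levi $L_I$ of $P$, while minimality of $P$ is precisely the condition that $\rho$ becomes $L_I$-irreducible there. \emph{Uniqueness:} any two such minimal parabolics are $G$-conjugate, hence the Levi type $[I]$ is a well-defined $W(G)$-orbit invariant of the class. This yields the disjoint stratification
\begin{equation*}
\mathcal{X}_G(\Gamma) = \bigsqcup_{[I]} \mathcal{X}_G^I(\Gamma).
\end{equation*}
To pass to virtual classes, I would verify local closedness of the strata using the natural partial order on Levi types (so that the closure of a stratum is a union of strata of smaller Levis), and then apply additivity in $\KVar$ together with the identification $[\mathcal{X}_G^I(\Gamma)] = [\mathcal{X}_{L_I}^\ast(\Gamma) \sslash W_I]$, which is exactly where the pseudo-quotient formalism of the paper plays its role: it guarantees that the $W_I$-quotient represents the correct virtual class even when the honest GIT quotient is not geometric.

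The main obstacle is the uniqueness step, namely showing that the type $[I]$ of the minimal parabolic containing $\rho(\Gamma)$ is unambiguously determined by the polystable $G$-orbit of $\rho$. This is a Richardson-type argument that leans on the reductivity of the stabilizer, and it is precisely here that the semidirect decomposition $N_I = L_I \rtimes W_I$ is essential: without a canonical splitting of $W_I$ inside $N_I$, the Weyl group would act only projectively on $\mathcal{X}_{L_I}^\ast(\Gamma)$ and the identification of the stratum with $\mathcal{X}_{L_I}^\ast(\Gamma) \sslash W_I$ would fail to hold on the nose.
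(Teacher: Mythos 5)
Your overall architecture mirrors the paper's: stratify by the Weyl-orbit of the subset $I \subseteq \Delta$ (equivalently, the conjugacy class of the Levi into which a closed orbit can be pushed), identify each stratum with $\mathcal{X}_{L_I}^\ast(\Gamma) \sslash W_I$, and add up in $\KVar$. However, the step you flag as ``crucial'' contains a genuine error. It is \emph{not} true that any $g \in G$ with $g\rho g^{-1} \in \mathcal{R}_{L_I}^\ast(\Gamma)$ must normalize $L_I$. For instance, in $G = \GL_4$ with $L_I = \GL_2 \times \GL_2$, take $\rho$ to be the direct sum of two copies of the same irreducible rank-$2$ representation; then $Z_G(\rho(\Gamma)) \cong \GL_2$ embedded block-wise, and a generic element $g$ of this centralizer satisfies $g\rho g^{-1} = \rho \in \mathcal{R}_{L_I}^\ast(\Gamma)$ while $gL_Ig^{-1} \neq L_I$. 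What is true --- and what the injectivity of your map $\overline\Phi_I$ actually needs --- is the weaker statement that conjugation by $g$ agrees \emph{on $\rho(\Gamma)$} with conjugation by some $g_0 \in N_I$. This is the content of the paper's Proposition \ref{prop:normalizer-conjugation}, and proving it is the technical heart of the argument: one passes to the Zariski closure $H$ of the image of $\rho$, shows that $L_I$-irreducibility makes $H$ ``sufficiently representative'' so that the fixed maximal torus $T$ and $g^{-1}Tg$ are both maximal tori of $Z_G(H)$, and then corrects $g$ by an element of $Z_G(H)$ conjugating one to the other. Your proposal skips this by asserting the false stronger claim.

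Two smaller points. First, a bijection of points between $\mathcal{X}_{L_I}^\ast(\Gamma)\sslash W_I$ and your stratum $\mathcal{X}_G^I(\Gamma)$ does not by itself give equality of virtual classes; the paper obtains this by proving that $(\mathcal{R}_{L_I}^\ast, N_I)$ is a \emph{core} for the $G$-action on the locally closed set $\widehat{\mathcal{R}}_{P_I}^\ast$ (Theorem \ref{thm:core-key}), which in particular requires the separation property (iii) of Definition \ref{defn:pseudo-quotient}: disjoint closed $N_I$-invariant subsets of $\mathcal{R}_{L_I}^\ast$ must have disjoint $G$-saturations. Your appeal to ``the pseudo-quotient formalism'' needs to be filled in with exactly this verification, which again rests on Proposition \ref{prop:normalizer-conjugation} together with the polystability of points of $\mathcal{R}_{L_I}^\ast$. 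Second, the splitting $N_I = L_I \rtimes W_I$ plays no role in the uniqueness of the Levi type: the paper's Theorem \ref{thm:main-result} holds without it, giving $\sum_I [\mathcal{R}_{L_I}^\ast(\Gamma)\sslash N_I]$, and the splitting is used only at the very end to rewrite $\mathcal{R}_{L_I}^\ast(\Gamma)\sslash N_I$ as $\left(\mathcal{R}_{L_I}^\ast(\Gamma)\sslash L_I\right)\sslash W_I = \mathcal{X}_{L_I}^\ast(\Gamma)\sslash W_I$; attributing the uniqueness step to it is a misdiagnosis.
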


Notice that $W_I$ is a finite group so in particular all the quotients appearing in the previous formula are regular quotients. It is worth mentioning that the hypothesis that we can decompose $N_I = L_I \rtimes W_I$ is not very restrictive, and indeed it is fulfilled by classical groups. Furthermore, in the case this condition does not hold for a certain $I$, we can still get a decomposition by replacing the corresponding summand by $\mathcal{R}_{L_I}^\ast(\Gamma) \sslash N_I$.

The proof of this result will make crucial use of the theory of pseudo-quotients for GIT, as developed in \cite{gonzalez2024pseudo}. In particular, a key result will be to show that $(\mathcal{R}_{L_I}^\ast(\Gamma), N_I)$ is a core for the action (c.f.\ Section \ref{ssec:cores}). Roughly speaking, this means that any orbit of a $P_I$-representation has an element of $\mathcal{R}_{L_I}^\ast(\Gamma)$ in its closure, and two representations of $\mathcal{R}_{L_I}^\ast(\Gamma)$ are $G$-equivalent if and only if they are $N_I$-equivalent. To address this problem, we will need several subtle results about Levi and parabolic subgroups of a reductive group.

As application of the main result, we shall obtain novel decompositions for groups of Dynkin diagram of type $A$, $B$, $C$ and $D$. In particular, we study in full detail the cases where $G$ is $\GL_n, \SL_n$ and $\PGL_n$ (Dynkin diagram $A_{n-1}$), $\SO_{2n+1}$ (Dynkin diagram $B_n$), $\Sp_{2n}$ (Dynkin diagram $C_n$) and $\SO_{2n}$ (Dynkin diagram $D_n$), but a similar analysis can be applied to obtain decompositions for more general groups.

Apart from providing a better understanding of the structure of character varieties, this work also aims to address a fundamental question: What is the role of the root datum of $G$ in the $G$-character variety? So far, the available techniques to study character varieties exploit no information of the root data. The arithmetic techniques, as developed by Hausel and Rodr\'iguez-Villegas in \cite{hausel2011arithmetic}, only make use of the representation theory of the finite group $G(\FF_q)$ over the finite field of $q$ elements; whereas the geometric techniques, developed by Logares, Muñoz and Newstead in \cite{logares2013hodge}, take advantage only of the geometry of the orbit space $G/G$. Despite this information is obviously linked with the root datum of $G$, the connection has never been cleared up. 

Revealing the involvement of the root datum in the character variety is definitely a first step towards a better understanding of the geometric Langlands programme and the mirror symmetry conjectures \cite{beilinson1991quantization,hausel2003mirror,kapustin2006electric}. Recall that given an algebraic group $G$, its Langlands dual ${^L}G$ is the only reductive group whose root datum is the dual root datum of $G$. In this way, the geometric mirror symmetry conjecture, as proposed in \cite{hausel2003mirror}, predicts an equality of virtual classes in $\KVar$
$$
    \left[\mathcal{X}_G(\Gamma)\right] = \left[\mathcal{X}_{{^L}G}(\Gamma)\right].
$$
Several weaker versions can be also considered, such us restricting ourselves to the case where $\Gamma$ the fundamental group of a surface, considering a coarser cohomological invariant than virtual classes known as the $E$-polynomial, or twisting this equality by considering a modified version known as the stringy $E$-polynomial. To the best of our knowledge, this conjecture has only been verified in some particular cases, see \cite{martinez2017polynomial,groechenig2020mirror,maulik2021endoscopic,mauri2021topological,florentino2021serre}.

In this direction, the decompositions obtained in this work provide strong evidences supporting the geometric mirror symmetry conjecture. Indeed, the sum in the stratification (\ref{eq:decomposition-intro}) is indexed by the same objects for Langlands dual groups, so the conjecture can be reduced to verify that
$$
    \left[\mathcal{X}_{L_I}^\ast(\Gamma) \sslash W_I\right] = \left[\mathcal{X}_{{^L}L_I}^\ast(\Gamma) \sslash W_I\right]
$$
for any subset $I \subseteq \Delta$ of simple roots. Furthermore, the Levi subgroups $L_I$ and ${^L}L_I$ tend to be much simpler than the original group $G$, since their Dynkin diagram is a subset of the original one. In particular, in many cases, the resulting Levi subgroups are of type $A$ and the predicted symmetry reduces to the symmetry for $G = \SL_n$ and ${^L}G = \PGL_n$. In any other cases, the Levi subgroup may have a Dynkin diagram of the same shape as the original group, but with less vertices, so the symmetry easily follows from our results by induction on the rank of the group.

The only stratum that remains elusive to this argument is precisely the stable loci $\mathcal{X}_{G}^\ast(\Gamma) $ and $\mathcal{X}_{{^L}G}^\ast(\Gamma)$ of irreducible representations, that correspond to the largest stratum $I = \Delta$.
In this way, as by-product of this argument and the decompositions provided for the $A,B,C$ and $D$ cases, we also obtain that the geometric mirror symmetry conjecture holds for an arbitrary $G$ if and only if it holds for $G$ of type $A$ and for the irreducible locus of the $G$-character variety. Furthermore, with the techniques developed in this paper, we expect to address the exceptional group cases as well as other Langlands dual pairs in a future work.

\subsection*{Structure of the manuscript} In Section \ref{sec:GIT_and_pseudo} we review the main results of Geometric Invariant Theory and pseudo-quotients that will be used throughout the paper. Section \ref{sec:reductive-groups-general} is devoted to the study of root data in reductive groups, including some auxiliary results that can be of independent interest, with special attention to root data of type $A$, $B$, $C$ and $D$. Section \ref{sec:parabolic-stratification} is the core of this paper, and there we prove the main results of this work in Theorem \ref{thm:main-result} and Corollary \ref{cor:main-result-good-situation}. Finally, in Section \ref{sec:parabolic-stratification-classical-groups} we apply the previous results to explicitly state these decompositions for some families of classical groups.

\subsection*{Acknowledgements}

The authors acknowledge Will Sawin for useful conversations regarding the realization of parabolic subgroups as flags. The key idea for the proof of Proposition \ref{prop:normalizer-conjugation} was kindly provided to us by the SeanC user in the MathOverflow forum.

The first-named author has been partially supported by Spanish Ministerio de Ciencia e Innovaci\'on project PID2019-106493RB-I00, through the COMPLEXFLUIDS grant awarded by the BBVA Foundation, and by the Madrid Government (\textit{Comunidad de Madrid} – Spain) under the Multiannual
Agreement with the Universidad Complutense de Madrid in the line Research Incentive for Young PhDs, in the context of the V PRICIT (Regional Programme of Research and Technological Innovation) through the project PR27/21-029.
The second-named author has been partially supported by Spanish Ministerio de Ciencia, Innovaci\'on e Universidades project PID2022-142024NB-I00.

\section{Geometric Invariant Theory and pseudo-quotients}
\label{sec:GIT_and_pseudo}

This this section, we shall briefly review some of the fundamental results of Geometric Invariant Theory (GIT) that will be used in this work. For a complete introduction on the techniques to be used, see \cite{newstead1978introduction} and \cite{gonzalez2024pseudo}. For simplicity, throughout this paper we shall work on an algebraically closed field of characteristic zero, mainly over the complex numbers $\CC$, but many results can be adapted to work in positive characteristic, see Remark \ref{rem:positive-char}.

\subsection{Review of GIT}
\label{ssec:reviewGIT}

Let $X$ be an algebraic variety let $G$ be an algebraic group acting on $X$. Denote by $G\cdot x$ the orbit of a point $x\in X$ by $G$ and by $\mathcal{O}_{X}$ the sheaf of regular functions on $X$. 

\begin{defn}
A pair $(Y,\pi)$, where $Y$ is an algebraic variety and $\pi:X \to Y$ is a regular $G$-invariant morphism, is called a \emph{categorical quotient} for the action of $G$ on $X$ if, for any other regular $G$-invariant morphism $f:X\to Z$ into an algebraic variety $Z$, there exists a unique morphism $g:Y\to Z$ such that the following diagram commutes
$$
\xymatrix{
X \ar[r]^\pi\ar[rd]_f & Y \ar@{--{>}}[d]^g\\
& Z}
$$
\end{defn}

\begin{defn}
The pair $(Y,\pi)$ where $\pi:X \to Y$ is a regular morphism is called a \emph{good quotient} of $(X,G)$ if:
\begin{enumerate}
\item $\pi$ is surjective.
\item $\pi$ is $G$-invariant.
\item If $W\subseteq X$ is a closed $G$-invariant subset, then the image $\pi(W)\subseteq Y$ is closed. 
\item For every two closed $G$-invariant subsets $W_1, W_2\subseteq X$, we have that $W_1\cap W_2=\emptyset $ if and only if $\pi(W_1)\cap \pi(W_2)=\emptyset$. 
\item For every open set $V\subseteq Y$, $\pi$ induces an isomorphism $\pi^{\ast}:\mathcal{O}_{Y}(V)\cong \mathcal{O}_{X}(\pi^{-1}(V))^{G}\subseteq \mathcal{O}_{X}(\pi^{-1}(V))$ with the subring of $G$-invariant functions. 
\end{enumerate}
\end{defn}

\begin{rem}
If $(Y, \pi)$ is a good quotient, then it is a categorical quotient (c.f.\ \cite[Corollary 3.5.1]{newstead1978introduction}). Moreover, categorical quotients, when exist, are unique.
\end{rem}

\begin{defn}
The pair $(Y,\pi)$ is called a \emph{geometric quotient} for the action of $G$ on $X$ if it is a good quotient which is an orbit space, i.e. $\pi^{-1}(y)=G\cdot y$ for every $y\in Y$ (equivalently $G\cdot x$ is closed in $X$ for every $x\in X$). 
\end{defn}

When the group $G$ is reductive, there exists a procedure to construct good quotients for its action on $X$, known as Geometric Invariant Theory (GIT). First, suppose that $X = \Spec(A)$ is affine. Then, the GIT quotient of $X$ by $G$ is
$$
    X \sslash G = \Spec(A^G),
$$
where $A^G$ denotes the subring of $G$-invariant functions on $A$, which is finitely generated by a theorem of Nagata \cite{nagata1963invariants}. The inclusion map $A^G \hookrightarrow A$ induces a morphism $X \to X \sslash G$ that can be proven to be a good quotient for the action (c.f.\ \cite[Theorem 3.5]{newstead1978introduction}).

In the general case of a quasi-projective variety $X$, to carry out this construction, we need an extra piece of information given by a linearization of the action, which is essentially an embedding of $X$ into a projective space $\PP^N$ such that the action on $X$ is the restriction of a linear action of $G$ on $\PP^N$. A linearization defines an open $G$-invariant subset $X^{\textup{SS}} \subseteq X$, called the \emph{semistable} locus and, on it, the same construction can be performed by glueing together GIT quotients constructed on each affine patch, giving rise a good quotient on $X^{\textup{SS}}$, also denoted by $X^{\textup{SS}} \sslash G$. Furthermore, there exists another open set $X^{\textup{S}} \subseteq X^{\textup{SS}}$, called the \emph{stable} locus on which the GIT quotient is a geometric quotient. Moreover, there exists a bigger subvariety $X^{\textup{PS}}$, called the \emph{polystable} locus, with $X^{\textup{S}} \subseteq  X^{\textup{PS}} \subseteq X^{\textup{SS}}$ but not necessarily open, on which the GIT quotient is also a good quotient. If $X$ is affine, then it is naturally endowed with a linearlization and for this linearization we have $X^{\textup{SS}} = X$. For further details, please refer to \cite[Chapter 3]{newstead1978introduction}.

\subsection{Pseudo-quotients}
\label{ssec:pseudo_quotients}

Despite their importance and geometric interpretation, good quotients are not well behaved motivically. For instance, they may not commute with stratifications. In order to get a better-behaved quotient, we should use pseudo-quotients instead, as introduced in \cite{gonzalez2024pseudo}. These pseudo-quotients are a weaker notion of good quotient capturing the topological relations but omitting the algebraic correspondence between $G$-invariant functions. 

\begin{defn}
A pair $(Y,\pi)$ is called a \emph{pseudo-quotient} for the action of $G$ on $X$ if 
$\pi:X \to Y$ is a regular morphism satisfying properties (1)-(4) in the definition of good quotient. 
\end{defn}

\begin{rem}
For every open set $V\subseteq Y$, $\pi$ induces a morphism 
$\pi^{\ast}:\mathcal{O}_{Y}(V)\cong \mathcal{O}_{X}(\pi^{-1}(V))$ which factors through $\mathcal{O}_{X}(\pi^{-1}(V))^{G}\subseteq \mathcal{O}_{X}(\pi^{-1}(V))$ because $\pi$ is $G$-invariant. However, in a pseudo-quotient, this map $\pi^{\ast}:\mathcal{O}_{Y}(V) \to \mathcal{O}_{X}(\pi^{-1}(V))^{G}$ does not have to be an isomorphism, as required in property (5) of a good quotient. 
\end{rem}

Pseudo-quotients might not be unique (in particular, they are not categorical quotients), since they do not capture the algebraic information of the structure sheaf of the quotient. However, it turns out that they are unique up to cut-and-paste relationships, as formalized by the Grothendieck ring of algebraic varieties.

\begin{defn}
The \emph{Grothendieck ring of algebraic varieties} $\KVar$ is the ring generated by isomorphism classes $[X]$ of algebraic varieties $X$, modulo the so-called cut-and-paste relations
$$
    [X] = [Y] + [X-Y],
$$
for any closed subvariety $Y\subseteq X$. Multiplication is given by cartesian product of varieties.
\end{defn}

Let us review some important features of pseudo-quotients that will be useful in the following.

\begin{prop}\cite[Proposition 3.7 and Corollaries 3.8 and 4.3]{gonzalez2024pseudo}
Let $(Y, \pi)$ be a pseudo-quotient of the action of $G$ on $X$. 
\begin{enumerate}
\item If $Y$ is normal and $X$ admits a pseudo-quotient $\xi: X\to Z$ which is also a categorical quotient, then $Y$ is isomorphic to $Z$ and $\pi$ is also a categorical quotient. 
\item If $X$ is irreducible and $Y$ is normal, then $(Y,\pi)$ is a good quotient.
\item Any other pseudo-quotient $(Z,\xi)$ verifies that $[Y]=[Z]$ in $\KVar$. 
\end{enumerate}
\end{prop}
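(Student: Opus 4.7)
The plan is to treat the three items in turn, leveraging the orbit-separation axioms (1)--(4) of a pseudo-quotient to build comparison morphisms, and then using the geometric hypotheses (normality, irreducibility) to upgrade pointwise statements to morphisms of varieties or to cut-and-paste identities in $\KVar$.

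For item (1), I would apply the universal property of the categorical quotient $(Z, \xi)$ to the $G$-invariant morphism $\pi$, obtaining a unique morphism $\phi: Z \to Y$ with $\pi = \phi \circ \xi$. Surjectivity of $\phi$ is immediate from surjectivity of $\pi$. For injectivity, if $\phi(z_1) = \phi(z_2) = y$, then $\xi^{-1}(z_1)$ and $\xi^{-1}(z_2)$ are closed $G$-invariant subsets of $X$ whose $\pi$-images are both $\{y\}$, so by property (4) applied to $(Y, \pi)$ the two preimages meet in $X$, forcing $z_1 = z_2$. With $\phi$ bijective and $Y$ normal, Zariski's Main Theorem promotes $\phi$ to an isomorphism, after which $\pi$ inherits the categorical-quotient property from $\xi$ by composition with $\phi^{-1}$.

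For item (2), the missing condition beyond pseudo-quotient is the sheaf-theoretic property (5). Injectivity of $\pi^*: \mathcal{O}_Y(V) \to \mathcal{O}_X(\pi^{-1}V)^G$ is automatic from surjectivity of $\pi$. For surjectivity, given $f \in \mathcal{O}_X(\pi^{-1}V)^G$, I would define $\bar f: V \to \mathbb{A}^1$ pointwise by $\bar f(y) = f(x)$ for any $x \in \pi^{-1}(y)$; well-definedness follows because two points in a common fibre have orbit closures meeting in $X$ (by property (4)) and hence take equal values under the continuous $G$-invariant $f$. The graph $\Gamma_{\bar f} = (\pi, f)(\pi^{-1} V) \subseteq V \times \mathbb{A}^1$ is closed by property (3) and irreducible by irreducibility of $X$, and projects bijectively onto $V$; normality of $V$ and Zariski's Main Theorem then identify the projection with an isomorphism, exhibiting $\bar f$ as a regular function.

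For item (3), I would induct on $\dim X$. A set-theoretic bijection $\phi: Y \to Z$ is obtained by the same orbit-closure matching as in (1), which is symmetric in the two pseudo-quotients. On an open dense $G$-invariant $U \subseteq X$ of nicely behaved orbits (e.g.\ the stable-like locus, where orbits are closed of uniform dimension), both pseudo-quotients restrict to geometric quotients, so the images $V = \pi(U)$ and $W = \xi(U)$ are isomorphic, yielding $[V] = [W]$ in $\KVar$. The complements $Y \setminus V$ and $Z \setminus W$ are pseudo-quotients of the closed $G$-invariant $X \setminus U$ of strictly smaller dimension, so induction gives $[Y \setminus V] = [Z \setminus W]$, and cut-and-paste yields $[Y] = [Z]$. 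The main technical obstacle I expect is in item (2), where promoting the pointwise $\bar f$ to a genuine regular function requires delicate combined use of normality of $Y$ and irreducibility of $X$; a secondary subtlety for (3) is organising the induction so that the locally closed strata of $Y$ and $Z$ genuinely inherit pseudo-quotient structures from the corresponding $G$-invariant loci of $X$.
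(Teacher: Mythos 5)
The paper does not actually prove this proposition --- it is quoted verbatim from \cite{gonzalez2024pseudo} --- so there is no internal argument to compare yours against; I can only assess your reconstruction on its own terms. Your item (1) is correct as written: the universal property of the categorical quotient $(Z,\xi)$ yields $\phi\colon Z\to Y$ with $\pi=\phi\circ\xi$, property (4) applied to the closed $G$-invariant fibres $\xi^{-1}(z_i)$ gives injectivity, and in characteristic zero a bijective morphism onto a normal variety is an isomorphism.

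Items (2) and (3), however, each contain a genuine gap. In (2), the closedness of $\Gamma_{\bar f}=(\pi,f)(\pi^{-1}V)$ does \emph{not} follow from property (3): that axiom controls $\pi$-images of closed $G$-invariant subsets of $X$, not images under the auxiliary map $(\pi,f)$ into $V\times\mathbb{A}^1$. What you actually need is that $\pi\times\mathrm{id}_{\mathbb{A}^1}$ again satisfies property (3) --- i.e.\ stability of pseudo-quotients under product with $\mathbb{A}^1$ --- which is not part of the definition and is essentially as hard as the statement being proved; writing $\Gamma_{\bar f}=\bigcup_{c}\pi(f^{-1}(c))\times\{c\}$ only shows each horizontal slice is closed, and an infinite union of closed sets need not be closed. (A secondary issue: the orbit closures used for well-definedness of $\bar f$ are taken in $X$ and may meet outside $\pi^{-1}(V)$, where $f$ is undefined.) In (3), the pivotal claim that on a dense invariant open $U$ ``both pseudo-quotients restrict to geometric quotients, so $\pi(U)$ and $\xi(U)$ are isomorphic'' is unsupported on two counts. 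First, a pseudo-quotient restricted to an invariant open is not automatically a pseudo-quotient of it: one needs $U$ saturated, and even then property (4) for the restriction is not formal, since orbit closures may meet only outside $U$. Second, and more seriously, pseudo-quotients carry no universal property --- condition (5) is exactly what has been dropped --- so two orbit spaces $\pi(U)$ and $\xi(U)$ come equipped with no morphism between them whatsoever; ``two geometric quotients are isomorphic'' is a consequence of the categorical property you no longer have. Some external input is required before the Noetherian induction can start, e.g.\ Rosenlicht's theorem supplying an honest categorical quotient on a dense saturated invariant open, combined with the fact that in characteristic zero a bijective morphism already forces equality of classes in $\KVar$.
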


\begin{rem}\label{rem:positive-char}
The previous proposition requires that we are working on an algebraically closed field of characteristic zero. However, result (3) remains valid in positive characteristic if we pass to the Grothendieck ring of constructible sets.
\end{rem}

\begin{rem}
In the case of complex algebraic varieties, a slightly coarser invariant can also be consider. Given a complex algebraic variety $X$, recall that its compactly-supported cohomology $H^k_c(X; \CC)$ is naturally equipped with a mixed Hodge structure, which provides the data of an increasing filtration $W_\bullet$, called the weight filtration, as well as a decreasing filtration $F^\bullet$, called the Hodge filtration. With this information at hand, we define the $E$-polynomial of $X$ as 
$$
    e(X) = \sum_{k,p,q}(-1)^k h_c^{k;p,q}(X) \,u^pv^q \in \ZZ[u,v],
$$
where $h_c^{k;p,q}(X) = \dim_\CC \textup{Gr}_p^{F^\bullet}\textup{Gr}^{p+q}_{W_\bullet}H_c^k(X;\CC)$ are the so-called Hodge numbers of $X$, with $\textup{Gr}^{\ell}_{W_\bullet}H_c^k(X;\CC) = W_\ell \left(H_c^k(X;\CC)\right) / W_{\ell-1} \left(H_c^k(X;\CC)\right)$ is the graded complex of $W_\bullet$, and similarly for $F^\bullet$. This invariant satisfies the cut-and-paste relations, so it factorizes as a ring homomorphism $e: \KVar \to \ZZ[u,v]$.
\end{rem}

\subsection{Cores}
\label{ssec:cores}

The idea of a core is the following. Given an algebraic variety $X$ and an action by $G$, suppose that there exists a subvariety $Y\subseteq X$ such that the closure of each $G$-orbit, $\overline{G\cdot x}$, intersects $Y$. This way, all the equivalence classes of points in a good quotient of $X$ by $G$ contain a representative in the subvariety $Y$. However, $Y$ does not need to be a slicing: it must be quotiented by the subgroup $H \subseteq G$ leaving $Y$ invariant. 

\begin{defn}\label{defn:pseudo-quotient}
Let $X$ be an algebraic variety with an action of an algebraic group $G$. A \emph{core} is a pair $(Y, H)$ where $Y\subseteq X$ is an algebraic subvariety and $H \subseteq G$ is an algebraic subgroup such that
\begin{itemize}
\item[(i)] $Y$ is orbitwise-closed for the $H$-action, i.e.\ the closure of the $H$-orbit satisfies $\overline{H\cdot y}\subseteq Y$, for all $y\in Y$. 
\item[(ii)] For every $x\in X$, we have $\overline{G\cdot x}\cap Y\neq \emptyset$. 
\item[(iii)] For every two $W_1, W_2\subseteq Y$ disjoint closed (in $Y$) $H$-invariant subsets, we have that $\overline{G\cdot W_1}\cap \overline{G\cdot W_2} = \emptyset$. 

\end{itemize}

\end{defn}

\begin{prop}\cite[Proposition 5.8]{gonzalez2024pseudo}
Suppose that the $G$-action on $X$ has a core $(Y, H)$, and there exists a pseudo-quotient $\pi:X\rightarrow \overline{X}$ for the $G$-action. Then $\pi$ restricts to a pseudo-quotient $\pi|_{Y}:Y\rightarrow \overline{X}$ for the $H$-action on $Y$. 
\end{prop}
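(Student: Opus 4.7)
The plan is to verify directly the four defining properties of a pseudo-quotient for the action of $H$ on $Y$, namely surjectivity, $H$-invariance, closedness of images of $H$-invariant closed sets, and the separation property, using the three core properties (i)--(iii) to transfer them from the corresponding properties of $\pi$. Before addressing them, I would establish a preliminary observation: for every $x\in X$ one has $\pi(\overline{G\cdot x})=\{\pi(x)\}$. Indeed, $\pi(G\cdot x)=\{\pi(x)\}$ by $G$-invariance, and since $\overline{G\cdot x}$ is closed and $G$-invariant, property (3) of $\pi$ ensures its image is a closed subset of $\overline{X}$ containing $\pi(x)$ and contained in $\overline{\{\pi(x)\}}=\{\pi(x)\}$ (working with closed points over the algebraically closed field).

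Surjectivity is then immediate: given $\bar{x}\in\overline{X}$, pick any $x\in\pi^{-1}(\bar{x})$ and, using core property (ii), produce $y\in\overline{G\cdot x}\cap Y$; the preliminary observation yields $\pi(y)=\pi(x)=\bar{x}$. The $H$-invariance of $\pi|_{Y}$ is automatic since $H\subseteq G$. For the separation property, suppose $W_{1},W_{2}\subseteq Y$ are disjoint closed $H$-invariant subsets; core property (iii) guarantees that $\overline{G\cdot W_{1}}$ and $\overline{G\cdot W_{2}}$ are disjoint closed $G$-invariant subsets of $X$, so property (4) of $\pi$ gives $\pi(\overline{G\cdot W_{1}})\cap\pi(\overline{G\cdot W_{2}})=\emptyset$, whence $\pi(W_{1})\cap\pi(W_{2})=\emptyset$. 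The converse implication is trivial from $\pi(W_{1}\cap W_{2})\subseteq\pi(W_{1})\cap\pi(W_{2})$.

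The main obstacle will be verifying that $\pi(W)$ is closed in $\overline{X}$ for every closed $H$-invariant $W\subseteq Y$. The natural strategy is to enlarge $W$ to $Z=\overline{G\cdot W}$, a closed $G$-invariant subset of $X$, and apply property (3) of $\pi$ to deduce that $\pi(Z)$ is closed in $\overline{X}$; it then suffices to prove $\pi(W)=\pi(Z)$. The inclusion $\pi(W)\subseteq\pi(Z)$ is clear, while the reverse demands, for each $z\in Z$, a point of $W$ with the same image under $\pi$. To produce it, core property (ii) first supplies some $y\in\overline{G\cdot z}\cap Y$, and the preliminary observation gives $\pi(y)=\pi(z)$. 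Then, to push $y$ inside $W$, I would consider $\overline{H\cdot y}\subseteq Y$, which is closed and $H$-invariant by core property (i), and argue by contradiction: if $W\cap\overline{H\cdot y}=\emptyset$, property (iii) would force $\overline{G\cdot W}\cap\overline{G\cdot\overline{H\cdot y}}=\emptyset$, contradicting the fact that $y$ lies in both sets. Hence there exists $y''\in W\cap\overline{H\cdot y}\subseteq W\cap\overline{G\cdot y}$, and the preliminary observation applied to the orbit closure through $y$ forces $\pi(y'')=\pi(y)=\pi(z)$, completing the argument. This last step is the delicate point of the proof: it is precisely here that all three core properties and the orbit-closure observation must combine, so that the $H$-picture on $Y$ can be glued to the ambient $G$-picture on $X$.
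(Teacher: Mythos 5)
Your argument is correct: the preliminary observation $\pi(\overline{G\cdot x})=\{\pi(x)\}$ (which in fact already follows from continuity and $G$-invariance of $\pi$, without needing property (3)), the transfer of surjectivity via core property (ii), of separation via (iii), and the key step $\pi(W)=\pi(\overline{G\cdot W})$ via the contradiction argument combining (i)--(iii) all check out. Note that the paper does not prove this statement itself but cites it from \cite[Proposition 5.8]{gonzalez2024pseudo}, so there is no internal proof to compare against; your write-up is the natural argument one would expect there.
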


\begin{cor}\label{cor:equality-pseudo-quotients}
In the hypotheses of the previous proposition, if $X$ admits a categorical quotient then, for any pseudo-quotient $X\rightarrow \overline{X}$ for the $G$-action on $X$ and any pseudo-quotient $Y \rightarrow \overline{Y}$ for the $H$-action on $Y$, we have
$[\overline{X}]=[\overline{Y}]$ in $\KVar$.
\end{cor}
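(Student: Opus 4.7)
The plan is to chain together the restriction statement from the preceding proposition on cores with the cut-and-paste uniqueness of pseudo-quotients, namely item (3) of the earlier proposition in Section \ref{ssec:pseudo_quotients}. The corollary is essentially immediate from these two ingredients, so there is no genuine obstacle; the only subtlety is keeping track of which action one is comparing pseudo-quotients for at each step.

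First, I would take the pseudo-quotient $\pi: X \to \overline{X}$ given by hypothesis. The preceding proposition tells us that the restriction $\pi|_Y: Y \to \overline{X}$ is itself a pseudo-quotient for the $H$-action on $Y$. Hence $\overline{X}$ appears, via $\pi|_Y$, as the target of a pseudo-quotient of the $H$-action on $Y$.

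Next, I would compare this $\pi|_Y$ with the given pseudo-quotient $Y \to \overline{Y}$: both are pseudo-quotients of the $H$-action on $Y$, with targets $\overline{X}$ and $\overline{Y}$ respectively. Applying item (3) of the earlier proposition—which asserts that any two pseudo-quotients of the same action have the same class in $\KVar$—to the $H$-action on $Y$, we immediately obtain $[\overline{X}] = [\overline{Y}]$. The assumption that $X$ admits a categorical quotient enters only to guarantee that $[\overline{X}]$ is intrinsic to the $G$-action (again by item (3), now applied on the $G$-side), so that the stated equality does not depend on the particular pseudo-quotient $\pi$ chosen at the outset.
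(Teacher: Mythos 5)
Your argument is correct and is precisely the intended one: the paper states this corollary without proof because it follows immediately by combining the restriction property of cores (the preceding proposition, which exhibits $\pi|_Y\colon Y\to\overline{X}$ as a pseudo-quotient for the $H$-action on $Y$) with the uniqueness of virtual classes of pseudo-quotients, applied to the two pseudo-quotients $\pi|_Y$ and $Y\to\overline{Y}$ of the same $H$-action. The only imprecision is your reading of the categorical-quotient hypothesis: it is not merely there to make $[\overline{X}]$ intrinsic on the $G$-side (item (3) would already give that), but rather because the uniqueness statement of \cite[Corollary 4.3]{gonzalez2024pseudo} is established under the assumption that a categorical quotient exists, so this hypothesis is what ultimately licenses the comparison of virtual classes you perform in the second step.
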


\section{Root data of type ABCD groups}\label{sec:reductive-groups-general}

In this section, we shall briefly review some key results of the theory of reductive groups that will be needed in the following. For a detailed exposition of this rich and fascinating theory, please check \cite{conrad2020reductive}, \cite{borel2012linear} or \cite{humphreys2012linear}, among others.

\subsection{Root data of reductive algebraic groups}\label{sec:reductive-groups}

Let $G$ be an algebraic group over an algebraically closed field of characteristic zero (we shall typically think on $\CC$). Recall that $G$ is said to be \emph{reductive} if the unipotent radical of the connected component of $G$ is trivial. Most of the classical groups are reductive, such as the general linear group $\GL_n$, the special linear group $\SL_n$, the special orthogonal group $\SO_n$ or the symplectic group $\Sp_n$. In particular, a semisimple group, i.e.\ one whose solvable (or, equivalently, abelian) connected closed normal subgroups are trivial, is reductive.  A reductive group is automatically linear, meaning that it can be realized as a subgroup of $\GL_n$ for some $n$ or, equivalently, it admits a faithful linear representation. From now on, we shall suppose that $G$ is a reductive group.

A \emph{Borel} subgroup $B\subseteq G$ is a maximal Zariski closed and connected solvable algebraic subgroup. If we fix a faithful representation of $G$ on a vector space $V$, then the quotient $G/B$ can be identified with the space of full flags on $V$. For this reason, $G/B$ is usually known as the \emph{full-flag variety} and is a complete variety. In general, a subgroup $P\subseteq G$ is said to be \emph{parabolic} if $G/P$ is a complete variety or equivalently if $B \subseteq P$. This way, a Borel subgroup $B$ is a minimal parabolic subgroup.

A \emph{maximal torus} $T \subseteq G$ is a connected abelian subgroup that is maximal with respect to the inclusion. Any maximal tori are $G$-conjugated. Let us fix a maximal torus $T$ of $G$ and a Borel subgroup $B$ such that $T\subseteq B\subseteq G$. Each conjugacy class of parabolic subgroups $\mathcal{P}$ contains a unique representative $P$ satisfying $T\subseteq B\subseteq P\subseteq G$. We will call these subgroups $P$ the \emph{standard parabolic subgroups} (with respect to $T$ and $B$). 

Let us denote by $X^{\ast}(T)$ the \emph{lattice of characters} of $T$, 
\[X^{\ast}(T)=\{\chi:T\rightarrow \GG_m\},\]
where $\GG_m = \CC^*$ is the multiplicative group of non-zero scalars, and by $X_{\ast}(T)$ the \emph{lattice of cocharacters} of $T$ (or $1$-parameter subgroups)
\[X_{\ast}(T)=\{\lambda:\GG_m\rightarrow T\},\]
which are dual free abelian groups with duality integral pairing $(\chi, \lambda )\in \mathbb{Z}$ for $\chi \in X^\ast(T)$ and $\lambda \in X_\ast(T)$. We denote by $\Phi\subseteq X^{\ast}(T)$ the finite subset of roots, i.e.\ of characters of $T$ that arise as weights for the adjoint action of $T$ on the Lie algebra of $G$. These roots are in bijection with the set of coroots $\Phi^{\vee}\subseteq X_{\ast}(T)$ by $\Phi \ni \alpha\leftrightarrow \alpha^{\vee}\in \Phi^{\vee}$, and $(\alpha, \alpha^{\vee})=2$. For each root and coroot, define the reflections
\[s_{\alpha}:X^{\ast}(T)\rightarrow X^{\ast}(T)\;,\quad x\mapsto x-(x, \alpha^{\vee})\alpha,\]
\[s_{\alpha}^{\vee}:X_{\ast}(T)\rightarrow X_{\ast}(T)\;,\quad y\mapsto y-(\alpha, y)\alpha^{\vee}.\]
The \emph{Weyl group} $W$ of $G$ is the group of automorphisms of $X^{\ast}(T)$ generated by the reflections $s_{\alpha}$. This group is isomorphic to $N_G(T)/Z_G(T)$, where $N_G(H)$ denotes the \emph{normalizer} in $G$ of a subgroup $H \subseteq G$ and $Z_G(H)$ denotes its \emph{centralizer} in $G$. Recall that for a maximal torus $T$ we have that $Z_G(T) = T$.

In abstract terms, a \emph{root datum} is a tuple $R = (X^{\ast}, \Phi, X_{\ast}, \Phi^{\vee})$ satisfying the properties above. There is a bijection between root data and reductive algebraic groups. The dual root datum of $R=(X^{\ast}, \Phi, X_{\ast}, \Phi^{\vee})$ is the tuple $R^\vee = (X_{\ast}, \Phi^{\vee}, X^{\ast}, \Phi)$ that switches characters with cocharacters and roots with coroots. Given a reductive algebraic group $G$ with root datum $R$, its Langlands dual $^L G$ is the reductive group whose root datum is the dual $R^\vee$. 

The choice of a Borel $B$ containing a maximal torus $T$ (there are two opposite Borel subgroups whose intersection is exactly $T$) picks out one root from each pair $\pm \alpha \in \Phi$ and we denote by $\Phi^+$ the subset of \emph{positive roots} satisfying that there exists a cocharacter $\lambda\in X_{\ast}(T)$ with $(\alpha, \lambda)\neq 0$, for every $\alpha\in \Phi$, and such that $\Phi^+=\{\alpha\in \Phi : (\alpha, \lambda)>0\}.$
Two Borel subgroups $B_1, B_2\subseteq G$ are said to be adjacent if $\dim (B_1\cap B_2) = \dim B_1 - 1 = \dim B_2 -1$. The associated systems of positive roots, $\Phi_1^+$ and $\Phi_2^+$, are adjacent if  $\Phi_1^+\cap \Phi_2^+$ has one element less than each  $\Phi_i^+$, $i=1,2$. In this case, there exists a unique root $\alpha\in \Phi_1^+$ such that $s_{\alpha} \Phi_1^+=\Phi_2^+$. 

Given the system of positive roots $\Phi^+$, let $\Delta\subseteq \Phi^+$ be the subset of positive roots such that $\Phi^+$ and $s_{\alpha}\Phi^+$ are adjacent. %
We call $\Delta$ a set of \emph{simple roots} and it turns out that $\Delta$ is a basis of the lattice generated by the roots $\Phi$ and, hence, $\Delta^{\vee} = \{\alpha^\vee\mid \alpha \in \Delta\}$ is a basis of the lattice generated by the coroots $\Phi^{\vee}$. This $\Delta$ corresponds to the nodes of the Dynkin diagram of the semisimple part of the group $G$. The Weyl group $W$ is, indeed, generated by just these reflections $s_{\alpha}$ for $\alpha\in \Delta$.

Fixed a Borel subgroup $B\subseteq G$, there is an inclusion-preserving bijection between subsets $I \subseteq \Delta$ and standard parabolic subgroups. The standard parabolic subgroup associated to $I$ will be denoted by $P_I$. 
Indeed, associated to a subset $I\subseteq \Delta$ there is a cocharacter $\lambda_I : \GG_m \rightarrow T$ such that $(\alpha, \lambda)=0$, for every $\alpha\in I$. Then we define 
\begin{equation}\label{eq:parabolic}
    P_I=\left\{g\in G\; \left| \; \lim_{t \to 0} \lambda_I(t)g \lambda_I(t)^{-1} \; \text{exists} \right.\right\},
\end{equation}
which is a parabolic subgroup, and each parabolic subgroup of $G$ arises in this way (c.f.\ \cite[Section 8.4]{springer1998groups}). Notice, in particular, that this characterization implies that, given an embedding $G \subseteq \GL_n$, every parabolic subgroup of $G$ is the restriction of a parabolic subgroup of $\GL_n$.

\begin{example}\label{ex:subset-extreme}
The emptyset $I=\emptyset$ corresponds to the Borel subgroup $B_\emptyset = B$, the maximal torus $T_{\emptyset}=T$ and $L_{\emptyset}=T$, whereas the full $I=\Delta$ corresponds to $P_\Delta = G$, $T_{\Delta}$ being the center of $G$ and $L_{\Delta}=G$. 
\end{example}

\begin{rem}
If we do not fix a Borel subgroup, then the bijection above is between subsets $I \subseteq \Delta$ and conjugacy classes of parabolic subgroups $\mathcal{P}_I$. However, chosing a Borel subgroup $B$ of $G$ selects a particular parabolic subgroup $P_I \in \mathcal{P}_I$, namely, the standard one.
\end{rem}

For each $I\subseteq \Delta$, let $\Phi_I\subseteq \Phi$ be the subset of roots generated by the elements $\alpha\in I$, and define $\Phi_I^{\vee}$ similarly.
Define
\begin{equation}\label{eq:torus-subset}
    T_I= \bigcap_{\alpha\in I} \ker \alpha,
\end{equation}
which is a subtorus of the maximal torus $T$. Notice that the relation $I \mapsto T_I$ is now inclusion-reversing. Let us denote by $L_I=Z_G(T_I)$ the centralizer of this torus, called the \emph{Levi subgroup} associated to $I$. 
This $L_I$ is a connected semisimple subgroup of $G$ with maximal torus $T_I$ and Borel subgroup $B_I= B\cap L_I$. Reciprocally, we can also recover the torus $T_I$ associated to a Levi subgroup $L_I$ as the connected component of the identity of the center of $L_I$.
The subgroups $L_{\alpha}:=L_{\{\alpha\}}=\ker \alpha$, where $\alpha$ ranges in all roots $\Phi$ generate the whole group $G$, and ranging in $\Phi^{+}$ generate the Borel $B$. Then the subgroups $L_{\alpha}$, where $\alpha$ ranges in $\Phi_I$, generate $L_I$.

Finally, we shall also consider the \emph{Weyl group} of $I$, given by $W_I = N_G(T_I) / Z_G(T_I) = N_G(T_I) / L_I$. Note that the Weyl group $W_I$ does not coincide with the Weyl group of $L_I$, as the Example \ref{ex:GL_n} shows.

\begin{rem}\label{rem:root-datum-levi}
    A remarkable property of the Levi subgroup $L_I \subseteq G$ associated to a subset of simple roots $I\subseteq \Delta$ is that it is characterized by the fact that its root datum is $(X^\ast(T), \Phi_I, X_\ast(T), \Phi_I^\vee)$, i.e.\ it has the same lattice of characters and cocharacters as $G$, but its root system is generated by only the roots of $I$. In particular, the Dynkin diagram of $L_I$ is the full subdiagram of the Dynkin diagram of $G$ with only the vertices of $I$ and edges between them, with the same multiplicity and orientation as in $G$.
\end{rem}

We characterize when two Levi subgroups are conjugate in the following lemma, which we will use in the proof of the main result in Theorem \ref{thm:main-result}.

\begin{lem}\label{lem:conjugate-levi-weil}
    Fix a Borel subgroup $B \subseteq G$ and let $L$ and $L'$ be two standard Levi subgroups. Then, $L$ and $L'$ are conjugate by $G$ if and only if they are conjugate by an element of the Weyl group of $G$.
\end{lem}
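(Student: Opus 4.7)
The plan is to prove the non-trivial implication (``only if''), since the converse is immediate: any $w\in W=N_G(T)/Z_G(T)$ lifts to some $n\in N_G(T)\subseteq G$, and $L=nL'n^{-1}$ in $G$ follows at once from $L=wL'w^{-1}$ (one checks that this $W$-conjugation is independent of the chosen lift because $T\subseteq L'$ is normalized by $T$).

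For the forward direction, I would start from $g\in G$ with $L=gL'g^{-1}$ and try to modify $g$, by multiplying on the left by an element of $L$, to land in $N_G(T)$. The key geometric observation is that both $L$ and $L'$ are standard Levis, so they both contain the fixed maximal torus $T$. Therefore $gTg^{-1}$ is a subtorus of $gL'g^{-1}=L$, and it has the same dimension as $T$; since $T$ is maximal in $G$, in particular $gTg^{-1}$ is a maximal torus of $L$. Now invoke the standard fact that any two maximal tori of a connected reductive group (here $L$) are conjugate: there exists $h\in L$ such that $h(gTg^{-1})h^{-1}=T$. Setting $n:=hg$, we have $n\in N_G(T)$ by construction, and
\[
nL'n^{-1}=hg\,L'\,g^{-1}h^{-1}=hLh^{-1}=L,
\]
because $h\in L$. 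Projecting $n$ to $W=N_G(T)/T$ produces the required Weyl element.

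The one point that deserves care, and which I would address just after the above, is to verify that the conjugation action on standard Levi subgroups descends cleanly to $W$. Concretely, if $n,n'\in N_G(T)$ differ by an element of $T$, then $nL'n^{-1}=n'L'(n')^{-1}$ because $T\subseteq L'$ and $T$ normalizes (indeed centralizes) itself, so the two conjugates coincide. This makes the statement ``$L$ and $L'$ are conjugate by an element of $W$'' meaningful and matches the $G$-conjugation exhibited above.

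I do not expect a serious obstacle: the argument is essentially the classical principle that centralizers of equal-dimensional tori are conjugate via normalizer elements, and the only ingredient beyond the definitions is the conjugacy of maximal tori inside the reductive group $L$. If one wished to avoid even that appeal, one could alternatively invoke the parallel statement for parabolics (standard parabolics in the same $G$-conjugacy class are $W$-conjugate) and apply it to $P_I$ and $P_{I'}$, since every Levi subgroup of $G$ determines, together with $B$, a unique standard parabolic for which it is the standard Levi factor, and conjugation of Levis by $g\in G$ translates into conjugation of the associated standard parabolics after the same adjustment by an element of $L$.
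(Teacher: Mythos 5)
Your proof is correct and follows essentially the same strategy as the paper's: both arguments adjust the conjugating element $g$ by an element of the Levi subgroup, produced by the conjugacy of maximal tori inside the connected reductive group $L$, so that the modified element lands in $N_G(T)$ and still carries $L'$ to $L$. Your version is in fact slightly more streamlined, since you note directly that $gTg^{-1}\subseteq gL'g^{-1}=L$ because the standard Levi $L'$ contains the fixed maximal torus, whereas the paper works with the central tori of the Levi subgroups and needs an extra computation to verify that the conjugate of the maximal torus centralizes them.
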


\begin{proof}
    Let $g \in G$ such that $gLg^{-1} = L'$, which implies that $gTg^{-1} = T'$ for the tori $T$ and $T'$ associated to the Levi subgroups $L$ and $L'$, respectively. Notice that, since both $L$ and $L'$ are standard Levi subgroups, the tori $T$ and $T'$ lie inside the same maximal torus $\tilde{T}$. Now, we obviously have $\tilde{T} \subseteq L$ since $L$ is the centralizer of $T \subseteq \tilde{T}$. But, furthermore, we also have $g^{-1}\tilde{T}g \subseteq L$ since for any $\tilde{t} \in \tilde{T}$ and $t \in T$, we have
    $$
        g^{-1}\tilde{t}gtg^{-1}\tilde{t}^{-1}g = g^{-1}gtg^{-1}\tilde{t}\tilde{t}^{-1}g = t.
    $$
    Here, in the first equality we used that $gtg^{-1} \in gTg^{-1} \subseteq \tilde{T}$, which is abelian.
    
    Therefore, both $\tilde{T}$ and $g^{-1}\tilde{T}g$ are tori of $L$, and they are maximal since $\tilde{T}$ is maximal in $G$, so there exists $\ell \in L$ such that $\ell g^{-1}\tilde{T}g\ell^{-1}=\tilde{T}$. Now, by definition, the element $g_0 := g \ell^{-1}$ normalizes $\tilde{T}$, but also satisfies that $g_0Tg_0^{-1} = g \ell^{-1}T \ell g^{-1} = gTg^{-1} = T'$ as $\ell$ commutes with $T$. Therefore, we also have $g_0Lg_0^{-1} = L'$ and thus the class of $g_0$ in the Weyl group provides the required element.
\end{proof}

\subsection{Root data for ABCD type groups}

There is an useful characterization of parabolic subgroups of a classical group $G$ as stabilizers of a flag of vector subspaces, possibly with extra conditions reflecting the structure of the group $G$. We will make precise this notion for the $A$, $B$, $C$ and $D$ series of the Dynkin diagram, through the classical four groups $\SL_n:=\SL(n, \CC)$, $\SO_{2n+1}:=\SO(2n+1,\CC)$, $\Sp_{2n}:=\Sp(2n,\CC)$ and $\SO_{2n}:=\SO(2n,\CC)$, whose Lie algebras correspond to these Dynkin series. We will start, though, by considering the reductive non-simple type $A$ group $\GL_n := \GL(n, \CC)$ and the adjoint-type $A$ group $\PGL_n := \PGL(n, \CC)$, which is the Langlands dual of $\SL_n$. 

\subsubsection{$G=\GL_n$, reductive non-simple,  with Dynkin diagram $A_{n-1}$}\label{sec:lie-gl}

\quad

Fixing a maximal torus $T$ of $\GL_n$ means fixing a basis of $\mathbb{C}^n$ such that $T$ are the diagonal matrices $T=\{ {\rm diag}(a_1, \ldots, a_n) \,\mid a_i\neq 0\}$; in this way elements of $\GL_n$ correspond to $n\times n$ non-singular matrices. Fixing a Borel $B\subseteq \GL_n$ containing $T$ is to choose between upper/lower triangular matrices in the given basis: let us fix $B$ to be the upper-triangular matrices. 

The lattice of characters of $T$ is 
\[X^{\ast}(T)=\{\chi:T\rightarrow \GG_m\;, \quad {\rm diag}(a_1, \ldots, a_n)\mapsto \chi(a_1, \ldots, a_n)\},\]
which is generated by the characters $\chi_i(a_1, \ldots, a_n) = a_i$ for $1 \leq i \leq n$. Therefore, $X^\ast(T) \cong \mathbb{Z}^n$.
In the same vein, the dual lattice of cocharacters is 
\[X_{\ast}(T)=\{\lambda:\GG_m \rightarrow T\;, \quad t\mapsto {\rm diag}(t^{\lambda_1},\ldots, t^{\lambda_n})\}\cong \mathbb{Z}^n\]
The pairing is given by $(\chi, \lambda)=n\in\mathbb{Z}$ if $(\chi\circ \lambda)(t)=t^n$.

The roots $\Phi=\{\alpha_{ij}:1\leq i,j\leq n, i\neq j\}$ are given by the characters
\[\alpha_{ij}:T\rightarrow \GG_m\;, \quad {\rm diag}(a_1, \ldots, a_n)\mapsto a_i\cdot a_j^{-1}.\]
The coroots $\Phi^{\vee}=\{\alpha^{\vee}_{ij}:1\leq i,j\leq n, i\neq j\}$ are the characters 
\[\alpha^{\vee}_{ij}:\GG_m \rightarrow T\;, \quad t\mapsto {\rm diag}(1,\ldots, 1,t,1,\ldots,1,t^{-1}, 1, \ldots, 1)\]
with the entry $t$ in the position $i$ and the entry $t^{-1}$ in the position $j$. Note that this verifies $(\alpha_{ij}, \alpha^{\vee}_{ij})=2$. Under the isomorphism between $X_{\ast}(T)$, $X^{\ast}(T)$ and $\mathbb{Z}^n$, the roots $\alpha_{ij}$ and coroots $\alpha^{\vee}_{ij}$ correspond to vectors $e_i-e_j$, where $e_1,\ldots, e_n$ is the canonical basis of $\mathbb{Z}^n$, the pairing being the standard inner product in $\mathbb{Z}^n$. 

The choice of the Borel subgroup being the upper triangular matrices yields that the positive roots $\Phi^{+}$ are those $\alpha_{ij}$ with $i<j$. A cocharacter $\lambda\in X_{\ast}(T)$ such that $(\alpha_{ij},\lambda)>0$ if $i<j$ is 
\[\lambda:\GG_m \rightarrow T\;, \quad t\mapsto {\rm diag}(t^{n},t^{n-1},\ldots, t^2, t).\]
Associated with each root $\alpha_{ij}$, the reflection $s_{\alpha_{ij}}$ acts as
\[s_{\alpha_{ij}}(a_1,\ldots, a_i, \ldots, a_j, \ldots, a_n)=(a_1,\ldots, a_j, \ldots, a_i, \ldots, a_n),\]
interchanging the entries $(i,j)$. From this we get that the basis of simple roots is given by $\Delta=\{\alpha_{12}, \alpha_{23}, \ldots, \alpha_{(n-1) n}\}$. The Weyl group $W$ of $\GL_n$ is the group generated by the reflections $s_{\alpha_{ij}}$, which can be interpreted as transpositions $(i,j)$, $i\neq j$, $1\leq i,j\leq n$ generating the symmetric group in $n$ elements $S_n$. Note that $W$ is generated by those transpositions of the form $\alpha_{i( i+1)}$, which are the ones in $\Delta$. There are $2^{|\Delta|}=2^n$ parabolic subgroups (up to conjugation).

\subsubsection{$G=\SL_n$, simple and simply connected, with Dynkin diagram $A_{n-1}$.}\label{sec:lie-sl}

\quad

Fixing a maximal torus $T_{\SL}$ of $\SL_n$ means fixing a basis of $\mathbb{C}^n$ such that $T_{\SL}$ are the diagonal matrices, so we can take $T_{\SL}=\{ {\rm diag}(a_1, \ldots, a_n) \,\mid a_1\cdots a_n=1\}$. Fixing a Borel $B_{\SL} \subseteq \SL_n$ is again to choose between upper/lower triangular matrices and we fix $B$ to be the upper-triangular matrices. 

The lattice of characters of $T_{\SL}$ is now
\[X^{\ast}(T_{\SL})=\{\chi:T_{\SL}\rightarrow \GG_m\;, \quad {\rm diag}(a_1, \ldots, a_n)\mapsto \chi(a_1, \ldots, a_n)\},\]
generated by the characters $\chi_i(a_1, \ldots, a_n) = a_i$ for $1 \leq i \leq n$, which are now dependant since $\chi_1\chi_2\cdots \chi_n = 1$. In other words, the map $\ZZ^n \to X^\ast(T_{\SL})$, given by $(x_1, \ldots, x_n) \mapsto \chi_1^{x_1}\cdots\chi_n^{x_n}$, has kernel equal to the span of $(1, \ldots, 1) \in \ZZ^n$, so we have an isomorphism
$$
    X^\ast(T_{\SL}) \cong \left\{(x_1, \ldots, x_n) \in \ZZ^n \right\}/(1, \ldots, 1) \cong \ZZ^{n-1}.
$$
The dual lattice of cocharacters is
\[X_{\ast}(T_{\SL})=\{\lambda:\GG_m \rightarrow T_{\SL}\;, \quad t\mapsto {\rm diag}(t^{\lambda_1},\ldots, t^{\lambda_n})\}\cong \{(\lambda_1, \ldots, \lambda_n) \in \ZZ^n \,\mid\, \lambda_1 +\ldots + \lambda_n = 0 \} \cong \mathbb{Z}^{n-1},\]
where the pairing is the same as in $\GL_n$.

Roots and coroots of $\SL_n$ are also the same as in $\GL_n$.
Observe that now $\Phi$ and $\Phi^{\vee}$ span $X^\ast(T_{\SL})$ and $X_\ast(T_{\SL})$, respectively. Positive and negative roots, then, remain the same as in $\GL_n$, and the basis of simple roots is $\Delta=\{\alpha_{12}, \alpha_{23}, \ldots, \alpha_{(n-1) n}\}$, with Weyl group generated by the reflections $s_{\alpha_{ij}}$, equalling the symmetric group $S_n$ in $n$ elements.

\subsubsection{$G=\PGL_n$, simple of adjoint type, with Dynkin diagram $A_{n-1}$}\label{sec:lie-pgl}

\quad 

Again, fixing a maximal torus $T_{\PGL}$ of $\PGL_n$ is the same as fixing a basis of $\mathbb{C}^n$ such that $T_{\PGL}=\{ {\rm diag}(a_1, \ldots, a_n) \,\mid a_i\neq 0\}/\GG_m$ are the diagonal matrices, and choose the Borel subgroup $B_{\PGL}$ to be the upper-triangular matrices up to scalar. 

The group $\PGL_n$ is the Langlands dual of $\SL_n$, then its root datum is given by interchanging that of $\SL_n$. In particular, the lattice of characters is \[X^{\ast}(T_{\PGL})=\{
\chi:T_{\PGL}\rightarrow \GG_m\;, \quad \left[{\rm diag}(a_1, \ldots, a_n)\right]\mapsto \chi(a_1, \ldots, a_n)\}\]
where the map $\ZZ^n \ni (x_1, \ldots, x_n) \mapsto \chi_1^{x_1} \ldots \chi_n^{x_n} $ is well-defined in $T_{\PGL}$ only if the resulting character has degree zero. Hence, we have an identification
\[X^{\ast}(T_{\PGL}) \cong \{(x_1, \ldots, x_n) \in \ZZ^n\,\mid\, x_1 + \cdots + x_n = 0\} = X_{\ast}(T_{\SL}) \cong \ZZ^{n-1}.
\] 
The dual lattice of cocharacters is
\[X_{\ast}(T_{\PGL})=\left\{\lambda:\GG_m \rightarrow T_{\PGL}\;, \quad t\mapsto \left[{\rm diag}(t^{\lambda_1},\ldots, t^{\lambda_n})\right]\right\}\cong X^{\ast}(T)/{\rm diag}(t,\ldots,t) = X^{\ast}(T_{\SL}) \cong \mathbb{Z}^{n-1}.\]
The pairing is the same as in $\GL_n$  and the roots and coroots of $\PGL_n$ are interchanged from those of $\SL_n$.

\subsubsection{$G=\Sp_{2n}$, simple with Dynkin diagram $C_{n}$}\label{sec:lie-sp}

\quad 

Let $(V, \omega)$ be a symplectic complex vector space, which is the data of a complex vector space $V$ of dimension $2n$ together with a complex symplectic bilinear form $\omega: V\times V\rightarrow \mathbb{C}$, which is bilinear, alternating ($\omega(v,v)=0$ for all $v\in V$) and non-degenerate ($\omega(u,v)=0$ for all $v\in V$ implies $u=0$). The group $\Sp_{2n}$ is the group of automorphisms $A: V \to V$ such that $\omega(Av, Av) = \omega(v,v)$ for all $v \in V$ or, equivalently, such that $A\omega A^t = \omega$.

A Borel subgroup of $\Sp_{2n}$ is the intersection of a Borel subgroup of $\GL_{2n}$ with $\Sp_{2n}$. Then we can assume that fixing a Borel in $\Sp_{2n}$ is equivalent to fixing a basis of $\CC^n$ of the form $(x_1, x_2,\ldots, x_n,y_1, y_2,\ldots, y_n)$ of $V$ such that, in that basis, the symplectic form $\omega$ adopts its standard form
$$
    \omega = \left(
\begin{array}{cc}
0 & I_n\\
-I_n & 0
\end{array}
\right),
$$
i.e.\ $\omega(x_i, x_j)=\omega(y_i, y_j)=0$ and $\omega(x_i, y_j)=-\omega(y_j, x_i)=\delta_{ij}$. 

In terms of root data, a maximal torus of $\Sp_{2n}$ is given by diagonal matrices of the form $T=\{ {\rm diag}(a_1, \ldots, a_{n}, a_1^{-1},\ldots, a_n^{-1})\,\mid a_i\neq 0\}$ (i.e.\ it identifies with a maximal torus of $\GL_n$).
The lattice of characters of $T$ is 
\[X^{\ast}(T)=\{\chi:T\rightarrow \GG_m\;, \quad {\rm diag}(a_1, \ldots, a_{n}, a_1^{-1},\ldots, a_n^{-1})\mapsto \chi(a_1, \ldots, a_{n})\}\cong \mathbb{Z}^n,\]
with dual lattice of cocharacters 
\[X_{\ast}(T)=\{\lambda:\GG_m \rightarrow T\;, \quad t\mapsto {\rm diag}(t^{\lambda_1},\ldots, t^{\lambda_n}, t^{-\lambda_{1}}, \ldots, t^{-\lambda_n})\}\cong \mathbb{Z}^n\]
where pairing is given by $(\chi, \lambda)=n\in\mathbb{Z}$ if $(\chi\circ \lambda)(t)=t^n$.

The roots are $\Phi=\{\alpha_{i, \epsilon}:{1\leq i\leq n, \epsilon=\pm 1}\}\cup \{\beta_{ij, \delta \kappa} : {1\leq i,j\leq n, i\neq j, \delta, \kappa=\pm 1}\}$ and are given by the characters
\[\alpha_{i, \epsilon}:T\rightarrow \GG_m\;, \quad {\rm diag}(a_1, \ldots, a_{n}, a_1^{-1},\ldots, a_n^{-1})\mapsto a_i^{2 \epsilon},\]
\[\beta_{ij, \delta \kappa}:T\rightarrow \GG_m\;, \quad {\rm diag}(a_1, \ldots, a_{n}, a_1^{-1},\ldots, a_n^{-1})\mapsto a_i^{\delta}\cdot a_j^{\kappa}.\]
The coroots $\Phi^{\vee}=\{\alpha_{i, \epsilon}^{\vee}:{1\leq i\leq n, \epsilon=\pm 1}\}\cup \{\beta^{\vee}_{ij, \delta\kappa}:{1\leq i,j\leq n, i\neq j, \delta,\kappa=\pm 1}\}$ are the cocharacters 
\[\alpha^{\vee}_{i,\epsilon}:\GG_m \rightarrow T\;, \quad t\mapsto {\rm diag}(1,\ldots, 1,t^{\epsilon},1,\ldots, t^{-\epsilon},1,\ldots,1),\]
with the entry $t^{\epsilon}$ in the position $i$ and the entry $t^{-\epsilon}$ is in the position $n+i$, and 
\[\beta^{\vee}_{ij, \delta\kappa}:\GG_m \rightarrow T\;, \quad t\mapsto {\rm diag}(1,\ldots, 1,t^{\delta},1,\ldots,1,t^{\kappa}, 1, \ldots, 1, t^{-\delta},1,\ldots,1,t^{-\kappa}, 1, \ldots, 1),\]
with the entry $t^{\delta}$ in the position $i$ the entry $t^{-\delta}$ is in the position $n+i$, the entry $t^{\kappa}$ in the position $j$ and the entry $t^{-\kappa}$ is in the position $n+j$. 
Note that this verifies $(\alpha_{i,\epsilon}, \alpha^{\vee}_{i,\epsilon})=2$ and $(\beta_{ij,\delta\kappa}, \beta^{\vee}_{ij,\delta\kappa})=2$. Under the isomorphism between $X_{\ast}(T)$, $X^{\ast}(T)$ and $\mathbb{Z}^n$, the roots $\alpha_{i,\epsilon}$ and $\beta_{ij,\delta\kappa}$ relate to vectors $2\epsilon e_i$ and $\delta e_i + \kappa e_j$, where $(e_1,\ldots, e_n)$ is the canonical basis of $\mathbb{Z}^n$, and the coroots $\alpha_{i,\epsilon}^{\vee}$ correspond to $\epsilon e_i$, the pairing being the standard inner product in $\mathbb{Z}^n$. 

The Weyl group of $\Sp_{2n}$ is $W= \ZZ_2^{n}\rtimes S_n$, where $S_n$ is generated by reflections coming from roots $\beta_{ij,\delta\kappa}$, yielding permutation of blocks between coordinates  from $1$ to $n$ (which give the same permutation in coordinates from $n+1$ to $2n$), and each factor $\ZZ_2$ acts by switching the coordinate $a_i$ with $a_i^{-1}$ in the diagonal. In terms of the basis $(x_1, \ldots, x_n, y_1, \ldots, y_n)$ this has the effect of permuting blocks among the $x$'s coordinates (and then permuting the $y$'s coordinates accordingly), and switching coordinate $x_i$ with $y_i$. 

A basis $\Delta$ of simple roots is given by $\Delta=\{\beta_{12,+ -}, \beta_{23,+-}, \beta_{34,+-},\ldots, \beta_{(n-1) n, +-}, \alpha_{n, +}\}$, the last one being the unique long root in the Dynkin diagram. There are $2^{|\Delta|}=2^n$ parabolic subgroups (up to conjugation). 

\begin{rem}
    As an example of sporadic isomorphisms in low dimension, we have that $\Sp_{2} = \SL_2$. This can be seen from the fact that both spaces have maximal torus $T=\CC^*$, lattice of characters equal to $X^\ast(T)=\ZZ$ and with the unique root $\alpha(a) = a^2$.
\end{rem}

\subsubsection{$G=\SO_{2n+1}$, simple with Dynkin diagram $B_{n}$.}\label{sec:lie-so-odd}

\quad

Let $V$ be a $(2n+1)$-dimensional complex vector space. We consider a non-degenerate symmetric bilinear form $Q: V \times V \to \CC$. Then $\SO_{2n+1}$ is the group of linear automorphisms $A: V \to V$ such that $Q(Av, Av) = Q(v,v)$ for all $v \in V$. After a change of basis, we can suppose that $V = \CC^{2n+1}$ with associated quadratic form
$$
    Q(a) = a_1a_{n+2} + a_2a_{n+3} + \ldots + a_{n}a_{2n+1} + a_{n+1}^2,
$$
where $a = (a_1, \ldots, a_{2n+1})$.

A maximal torus is given by diagonal matrices $T=\{ {\rm diag}(a_1, \ldots, a_{n},1, a_1^{-1},\ldots, a_n^{-1}) \,\mid a_i\neq 0\}$. 
We fix a Borel $B\subseteq \SO_{2n+1}$ by choosing the upper-triangular orthogonal matrices. The group $\SO_{2n+1}$ is the Langlands dual of $\Sp_{2n}$, therefore its root datum is recovered from that of the simplectic group by interchanging characters and roots by cocharacters and coroots. We include here the full description for completeness. 

The lattice of characters of $T$ is 
\[X^{\ast}(T)=\{\chi:T\rightarrow \GG_m\;, \quad {\rm diag}(a_1, \ldots, a_{n}, 1, a_1^{-1},\ldots, a_n^{-1})\mapsto \chi(a_1, \ldots, a_{n})\}\cong \mathbb{Z}^n\]
and the lattice of cocharacters is
\[X_{\ast}(T)=\{\lambda:\GG_m \rightarrow T\;, \quad t\mapsto {\rm diag}(t^{\lambda_1},\ldots, t^{\lambda_n}, 1, t^{-\lambda_{1}}, \ldots, t^{-\lambda_n})\}\cong \mathbb{Z}^n.\]
The roots are $\Phi=\{\alpha_{i, \epsilon} : {1\leq i\leq n, \epsilon=\pm 1}\}\cup \{\beta_{ij, \delta \kappa}:{1\leq i,j\leq n, i\neq j, \delta, \kappa=\pm 1}\}$ with
\[\alpha_{i, \epsilon}:T\rightarrow \GG_m\;, \quad {\rm diag}(1, a_1, \ldots, a_{n}, a_1^{-1},\ldots, a_n^{-1})\mapsto a_i^{\epsilon},\]
\[\beta_{ij, \delta \kappa}:T\rightarrow \GG_m\;, \quad {\rm diag}(1, a_1, \ldots, a_{n}, a_1^{-1},\ldots, a_n^{-1})\mapsto a_i^{\delta}\cdot a_j^{\kappa}, \]
and the coroots are $\Phi^{\vee}=\{\alpha_{i, \epsilon}^{\vee}:{1\leq i\leq n, \epsilon=\pm 1}\}\cup \{\beta^{\vee}_{ij, \delta\kappa}:{1\leq i,j\leq n, i\neq j, \delta,\kappa=\pm 1}\}$ with
\[\alpha^{\vee}_{i,\epsilon}:\GG_m \rightarrow T\;, \quad t\mapsto {\rm diag}(1,\ldots, 1,t^{2\epsilon},1,\ldots, t^{-2\epsilon},1,\ldots,1),\]
\[\beta^{\vee}_{ij, \delta\kappa}:\GG_m \rightarrow T\;, \quad t\mapsto {\rm diag}(1,\ldots, 1,t^{\delta},1,\ldots,1,t^{\kappa}, 1, \ldots, 1, t^{-\delta},1,\ldots,1,t^{-\kappa}, 1, \ldots, 1).\]

The Weyl group of $\SO_{2n+1}$ is $W= \ZZ_2^{n}\rtimes S_n$, where $S_n$ is generated by reflections coming from roots $\beta_{ij,\delta\kappa}$, yielding permutation of blocks between coordinates  from $2$ to $n+1$ (which give the same permutation in coordinates from $n+2$ to $2n+1$), and each $\ZZ_n$ acting switching the coordinate $a_i$ with $a_i^{-1}$ in the diagonal. A basis $\Delta$ of simple roots is given by $\Delta=\{\beta_{12,+ -}, \beta_{23,+-}, \beta_{34,+-},\ldots, \beta_{(n-1) n, +-}, \alpha_{n, +}\}$, the last one being the unique short root in the Dynkin diagram, and there are $2^{|\Delta|}=2^n$ parabolic subgroups (up to conjugation).

\subsubsection{$G=\SO_{2n}$, simple with Dynkin diagram $D_{n}$}\label{sec:lie-so-even}

\quad

With the same notations of the odd orthogonal group $\SO_{2n+1}$, we take $\CC^{2n}$ the $2n$-dimensional complex vector space with a chosen basis, and we equip it with the quadratic form
$$
    Q(a) = a_1a_{n+1} + a_2a_{n+2} + \ldots + a_{n}a_{2n}
$$
where $a = (a_1, \ldots, a_{2n})$. In this way, $\SO_{2n}$ is the group of automorphisms of $\CC^{2n}$ that preserve $Q$. A maximal torus is given by diagonal matrices $T=\{ {\rm diag}( a_1, \ldots, a_{n}, a_1^{-1},\ldots, a_n^{-1}) \,\mid a_i\neq 0\}$ and we fix a Borel subgroup $B\subseteq \SO_{2n}$ by choosing the upper-triangular orthogonal matrices. 

The lattices of characters and cocharacters of $T$ now are
\[X^{\ast}(T)=\{\chi:T\rightarrow \GG_m\;, \quad {\rm diag}(a_1, \ldots, a_{n}, a_1^{-1},\ldots, a_n^{-1})\mapsto \chi(a_1, \ldots, a_{n})\}\cong \mathbb{Z}^n\]
\[X_{\ast}(T)=\{\lambda:\GG_m \rightarrow T\;, \quad t\mapsto {\rm diag}( t^{\lambda_1},\ldots, t^{\lambda_n}, t^{-\lambda_{1}}, \ldots, t^{-\lambda_n})\}\cong \mathbb{Z}^n\]
and the roots are $\Phi=\{\beta_{ij, \delta \kappa}:{1\leq i,j\leq n, i\neq j, \delta, \kappa=\pm 1}\}$ given by the characters
\[\beta_{ij, \delta \kappa}:T\rightarrow \GG_m\;, \quad {\rm diag}(a_1, \ldots, a_{n}, a_1^{-1},\ldots, a_n^{-1})\mapsto a_i^{\delta}\cdot a_j^{\kappa},\]
coroots being $\Phi^{\vee}=\{\beta^{\vee}_{ij, \delta\kappa}:{1\leq i,j\leq n, i\neq j, \delta,\kappa=\pm 1}\}$
\[\beta^{\vee}_{ij, \delta\kappa}:\GG_m \rightarrow T\;, \quad t\mapsto {\rm diag}(1,\ldots, 1,t^{\delta},1,\ldots,1,t^{\kappa}, 1, \ldots, 1, t^{-\delta},1,\ldots,1,t^{-\kappa}, 1, \ldots, 1).\]
Here the entry $t^{\delta}$ lies in the position $i$, the entry $t^{-\delta}$ is in the position $n+i$, the entry $t^{\kappa}$ in the position $j$ and the entry $t^{-\kappa}$ is in the position $n+j$, verifying $(\beta_{ij,\delta\kappa}, \beta^{\vee}_{ij,\delta\kappa})=2$. Under the isomorphism between $X_{\ast}(T)$, $X^{\ast}(T)$ and $\mathbb{Z}^n$, the roots and coroots $\beta_{ij,\delta\kappa}$ and $\beta_{ij,\delta\kappa}^{\vee}$ relate to vectors $\delta e_i + \kappa e_j$, where $(e_1,\ldots, e_n)$ is the canonical basis of $\mathbb{Z}^n$.

Let $H_n$ be the the kernel of the morphism $\ZZ_2^{n}\rightarrow \ZZ_2$ given by $(\epsilon_1, \cdots , \epsilon_n) \mapsto \epsilon_1\cdots \epsilon_n$, that is $H_{n}$ is the subgroup of $\ZZ_2^n$ with an even number of non-identity elements. The Weyl group in this case is $W= H_{n} \rtimes S_n$.
A basis $\Delta$ of simple roots is given by $\Delta=\{\beta_{12,+ -}, \beta_{23,+-},\ldots, \beta_{(n-2)(n-1), +-}, \beta_{(n-1) n, +-}, \beta_{(n-1) n, ++}\}$, the last two being the branching in the Dynkin diagram (note that we have product of roots given by $(\beta_{(n-2)(n-1), +-},\beta_{(n-1) n, +-})=(\beta_{(n-2)(n-1), +-}, \beta_{(n-1) n, ++})=-1$, meaning that these are dots connected by an arrow in the diagram, but last two roots are not because of $(\beta_{(n-1) n, +-}, \beta_{(n-1) n, ++})=0$). There are $2^{|\Delta|}=2^n$ parabolic subgroups (up to conjugation).

\section{Parabolic stratification of $G$-character varieties}\label{sec:parabolic-stratification}

Let us fix a reductive group $G$ over an algebraically closed field of characteristic zero, and a finitely generated group $\Gamma$. Let us consider the representation variety $\mathcal{R}_G(\Gamma) = \Hom(\Gamma, G)$. To lighten the notation, throughout this section we shall remove the reference to $\Gamma$ when it is clear from the context, and we shall denote the representation variety just by $\mathcal{R}_G$.

\begin{defn}
The GIT quotient    
\[
\mathcal{X}_G(\Gamma) =\mathcal{R}_G(\Gamma)\quot G
\]
with respect to the action of $G$ on the representation variety $\mathcal{R}_G(\Gamma)$ is an algebraic variety known as the \emph{$G$-character variety} of $\Gamma$ .
\end{defn}

A representation $\rho: \Gamma \to G$ of $G$ is said to be \emph{reducible} if there exists a proper parabolic subgroup $P \subsetneq G$ such that $\rho(\Gamma) \subseteq G$. Otherwise, $\rho$ is said to be \emph{irreducible}. The set of irreducible representations forms an open sets $\mathcal{R}_G^{\ast}(\Gamma) \subseteq \mathcal{R}_G(\Gamma)$ and $\mathcal{X}_G^{\ast}(\Gamma) \subseteq \mathcal{X}_G(\Gamma)$ of closed orbits. However, the situation for reducible representations is more complicated since their orbits may not be closed and thus non-conjugated representations may be identified in the GIT quotient.

The idea of this section will be to decompose this GIT quotient into a locally closed stratification given by pseudo-quotients as in Section \ref{ssec:pseudo_quotients}. The strategy will be to partition the representation variety into invariant subsets associated with parabolic subgroups of $G$, indexed by subsets of simple roots. Then it occurs that the restriction of the $G$-conjugacy action to each of these pieces we study has a core as in Section \ref{ssec:cores}, for certain subvariety and certain subgroup, which are carefully identified in terms of root data. This way, we obtain a motivic decomposition of the $G$-character variety in terms of these cores, capturing all of its topology in Theorem \ref{thm:main-result}.

Given a parabolic subgroup $P \subseteq G$, we will say that a representation $\rho: \Gamma \to G$ \emph{preserves $P$} if $\rho(\gamma) \in P$ for all $\gamma \in \Gamma$ or, equivalently, if it defines a representation $\rho: \Gamma \to P$. Given $I \subseteq \Delta$ a subset of the set of simple roots of $G$, we will denote by $\mathcal{P}_I$ the set of parabolic subgroups conjugate to the standard parabolic subgroup $P_I$ determined by $I$. We will say that $\rho: \Gamma \to G$ is \emph{of type $I$} if $\rho$ preserves $P$ for some $P \in \mathcal{P}_I$, not necessarily standard. Let us denote by $\widehat{\mathcal{R}}_{{P}_I}$ the set of representations of type $I$ and notice that
$$
    \widehat{\mathcal{R}}_{{P}_I} = \bigcup_{P \in \mathcal{P}_I} \mathcal{R}_{P} = G \cdot {\mathcal{R}}_{P_I}
$$
for the conjugacy action of $G$.

\begin{prop}
For any subgroup $I \subseteq \Delta$, the set $\widehat{\mathcal{R}}_{{P}_I}$ is a locally closed subvariety of the representation variety $\mathcal{R}_G$.

\begin{proof}
Fix a Borel subgroup $B$ of $G$. Hence, there is a distinguished parabolic subgroup $P_I \in \mathcal{P}_I$ that contains $B$, the standard one. The set of representations preserving $P_I$, $\mathcal{R}_{P_I} \subseteq \mathcal{R}_G$, is a closed subset since $P_I \subseteq G$ is a closed subgroup. Now, we have that $\widehat{\mathcal{R}}_{{P}_I}$ is the collection of $G$-orbits of $\mathcal{R}_{P_I}$, and thus it is a locally closed set.
\end{proof}
\end{prop}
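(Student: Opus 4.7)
The plan is to show the stronger statement that $\widehat{\mathcal{R}}_{P_I}$ is actually closed in $\mathcal{R}_G$, by exhibiting it as the image of a proper morphism from an incidence variety. This is a standard trick that exploits the fact that $G/P_I$ is a complete variety (since $P_I$ is parabolic), together with the self-normalizing property $N_G(P_I)=P_I$ that gives a natural bijection between $\mathcal{P}_I$ and $G/P_I$ via $gP_I \leftrightarrow gP_Ig^{-1}$.

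First I would introduce the incidence variety
\[
Z = \bigl\{(gP_I,\rho)\in (G/P_I)\times \mathcal{R}_G \;\big|\; \rho(\gamma)\in gP_Ig^{-1} \text{ for all } \gamma\in\Gamma\bigr\}.
\]
A quick verification shows the defining condition depends only on the coset $gP_I$: replacing $g$ by $gp$ with $p\in P_I$ transforms $g^{-1}\rho(\gamma)g\in P_I$ into $p^{-1}(g^{-1}\rho(\gamma)g)p\in P_I$, which holds since $P_I$ is a subgroup.

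Next I would show that $Z$ is closed. Writing $\Gamma = \langle \gamma_1,\ldots,\gamma_s\mid r_\alpha\rangle$, the condition $\rho(\gamma)\in gP_Ig^{-1}$ for all $\gamma$ reduces to the $s$ closed conditions $g^{-1}\rho(\gamma_i)g \in P_I$. Consider the morphism
\[
\Phi: G\times \mathcal{R}_G \longrightarrow G^s,\qquad (g,\rho)\longmapsto \bigl(g^{-1}\rho(\gamma_1)g,\ldots,g^{-1}\rho(\gamma_s)g\bigr).
\]
Its preimage $\Phi^{-1}(P_I^s)$ is closed in $G\times \mathcal{R}_G$, and by the well-definedness above it is invariant under the right $P_I$-action on the first factor, hence descends to a closed subset of $(G/P_I)\times \mathcal{R}_G$, which is precisely $Z$.

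Finally, consider the projection $p\colon (G/P_I)\times \mathcal{R}_G \to \mathcal{R}_G$. Since $P_I$ is parabolic, $G/P_I$ is complete, so $p$ is a proper morphism and therefore closed. By construction the image $p(Z)$ is the set of representations $\rho$ admitting some $gP_I$ with $\rho(\Gamma)\subseteq gP_Ig^{-1}$, i.e.\ $p(Z)=\widehat{\mathcal{R}}_{P_I}$. Hence $\widehat{\mathcal{R}}_{P_I}$ is closed, and in particular locally closed, in $\mathcal{R}_G$. The only subtle point in this argument is verifying that the incidence condition is genuinely $P_I$-invariant so that $Z$ is a well-defined closed subscheme of $(G/P_I)\times \mathcal{R}_G$; beyond this, the proof is a formal consequence of the completeness of the partial flag variety $G/P_I$.
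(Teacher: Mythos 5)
Your proof is correct, but it takes a genuinely different route from the paper's. The paper's argument is a two-line assertion: $\mathcal{R}_{P_I}$ is closed because $P_I$ is a closed subgroup, and then $\widehat{\mathcal{R}}_{P_I} = G\cdot\mathcal{R}_{P_I}$ is declared locally closed because it is a union of $G$-orbits of a closed set; as written, that last step only yields constructibility by Chevalley-type arguments, since the saturation of a closed set under a group action need not be locally closed in general. You instead run the classical incidence-variety argument: the correspondence $Z\subseteq (G/P_I)\times\mathcal{R}_G$ is closed (your descent from the $P_I$-saturated closed set $\Phi^{-1}(P_I^s)$ in $G\times\mathcal{R}_G$ is fine, since $G\to G/P_I$ is an open surjection), and the projection is closed because $G/P_I$ is complete, so $p(Z)=\widehat{\mathcal{R}}_{P_I}$ is in fact \emph{closed}, not merely locally closed. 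This is both more rigorous and strictly stronger than what the paper establishes, and the closedness is what one actually wants elsewhere (e.g.\ it is exactly why the irreducible locus $\mathcal{R}_G^\ast$ is open). The aside about $N_G(P_I)=P_I$ giving a bijection $\mathcal{P}_I\leftrightarrow G/P_I$ is true but not needed: surjectivity of $gP_I\mapsto gP_Ig^{-1}$ onto $\mathcal{P}_I$, which holds by the definition of $\mathcal{P}_I$ as a conjugacy class, already gives $p(Z)=\widehat{\mathcal{R}}_{P_I}$.
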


From now on, we shall fix a Borel subgroup $B \subseteq G$ which in particular determines a maximal torus $T \subseteq B$ and a standard parabolic subgroup $ P_I \supseteq B$ for any $I\subseteq \Delta$. Now, consider ${\mathcal{R}}_{P_I}^\ast$ the collection of representations $\rho: \Gamma \to P_I$ that are irreducible as $P_I$-representations, i.e.\ such that there exists no proper parabolic subgroup $P$ of $P_I$ such that $\rho(\Gamma) \subseteq P$. In other words,
$$
    {\mathcal{R}}_{P_I}^\ast = {\mathcal{R}}_{P_I} - \bigcup_{I' \subseteq I} {\mathcal{R}}_{{P}_{I'}}.
$$
We also consider $\widehat{\mathcal{R}}_{{P}_I}^\ast$ as the collection of representations conjugated to one of ${\mathcal{R}}_{P_I}^\ast$, that is
$$
    \widehat{\mathcal{R}}_{\mathcal{P}_I}^\ast = G\cdot {\mathcal{R}}_{P_I}^\ast
$$

Notice that each stratum ${\mathcal{R}}_{\widehat{P}_I}^\ast$ is invariant under conjugation.
These spaces induce a decomposition (not necessarily disjoint) of the representation variety by subsets of the Dynkin diagram of the form
\begin{equation}\label{eq:decomposition-representation}
    \mathcal{R}_G = \bigcup_{I \subseteq \Delta} \widehat{\mathcal{R}}_{{P}_I}^\ast.
\end{equation}
Furthermore, observe that each stratum $\widehat{\mathcal{R}}_{{P}_I}^\ast$ is orbitwise-closed i.e.\ the Zariski closure of the orbit of any element of $\widehat{\mathcal{R}}_{{P}_I}^\ast$ is contained in $\widehat{\mathcal{R}}_{{P}_I}^\ast$. 

Recall that, as explained in Section \ref{sec:reductive-groups}, associated to a subset $I \subseteq \Delta$, we also have a unique torus $T_I \subseteq G$ contained in the Borel subgroup $B$, as given in (\ref{eq:torus-subset}). Its centralizer $L_I = Z_G(T_I)$ is the associated Levi subgroup of $P_I$. 
Using it, we can consider the {Weyl group} of $I$ as
$$
    W_I = N_I / L_I,
$$
where $N_I = N_G(T_I)$ denotes the normalizer of $T_I$ in $G$. Observe that $N_I = N_G(L_I)$ since $N_G(T) = N_G(Z_G(T)) = N_G(L_I)$. 

\begin{rem}
As discussed in Example \ref{ex:subset-extreme}, in the case $I = \emptyset$ we have $T_\emptyset = L_\emptyset = T$, the maximal torus of $G$ contained in the Borel subgroup. In particular, we have that $W_\emptyset = N_G(T) / T$ is the usual Weyl group of $G$.
\end{rem}

\begin{prop}\label{lem:orbit-parabolic}
Let $x \in P$, for a parabolic subgroup $P$. Then the Zariski closure of the conjugacy orbit of $x$ in $G$ intersects the Levi subgroup of $P$.
\end{prop}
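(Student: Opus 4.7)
The plan is to exploit the characterization of parabolic subgroups via limits along one-parameter subgroups, as recalled in~(\ref{eq:parabolic}). Since every parabolic subgroup is $G$-conjugate to a standard one and conjugation sends Levi to Levi, it suffices to treat the case $P = P_I$ with its standard Levi $L_I$.

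Given $x \in P_I$, the definition in~(\ref{eq:parabolic}) guarantees that the element
\[
y := \lim_{t \to 0} \lambda_I(t)\, x\, \lambda_I(t)^{-1}
\]
exists in $G$. First I would check that $y \in L_I$: for every $s \in \GG_m$,
\[
\lambda_I(s)\, y\, \lambda_I(s)^{-1} = \lim_{t \to 0} \lambda_I(st)\, x\, \lambda_I(st)^{-1} = y,
\]
so $y$ centralizes the one-parameter subgroup $\lambda_I(\GG_m)\subseteq T_I$. By the defining property of $\lambda_I$, the roots $\alpha \in \Phi$ satisfying $(\alpha, \lambda_I) = 0$ are precisely those in $\Phi_I$, and consequently the $G$-centralizer of $\lambda_I(\GG_m)$ coincides with $Z_G(T_I) = L_I$; hence $y \in L_I$.

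Next, I would verify that $y \in \overline{G \cdot x}$. The morphism $\GG_m \to G$ given by $t \mapsto \lambda_I(t) x \lambda_I(t)^{-1}$ factors through the orbit $G \cdot x$, and the existence of the limit at $t=0$ extends it to a morphism $\mathbb{A}^1 \to G$ sending $0 \mapsto y$. Therefore $y \in \overline{G \cdot x} \cap L_I$, which is the desired intersection point.

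The only non-trivial ingredient is the identification $Z_G(\lambda_I(\GG_m)) = L_I$, and I expect this to be the subtlest point of the argument. It is however essentially structural and follows from the root-data setup reviewed in Section~\ref{sec:reductive-groups}: the adjoint action of $\lambda_I$ on $\mathfrak{g}$ decomposes it into weight spaces indexed by $(\alpha, \lambda_I)$ for $\alpha \in \Phi$, the weight-zero part being spanned by the Cartan together with the root spaces $\mathfrak{g}_\alpha$ with $\alpha \in \Phi_I$, which is exactly the Lie algebra of $L_I$.
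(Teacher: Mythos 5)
Your proposal is correct and follows essentially the same route as the paper: reduce to the standard parabolic $P_I$, take $y=\lim_{t\to 0}\lambda_I(t)x\lambda_I(t)^{-1}$, show it commutes with $\lambda_I(s)$ by the same reindexing of the limit, and conclude $y\in \overline{G\cdot x}\cap L_I$. Your extra care in justifying $Z_G(\lambda_I(\GG_m))=L_I$ via the weight decomposition, and in extending $t\mapsto \lambda_I(t)x\lambda_I(t)^{-1}$ to $\mathbb{A}^1$, only fills in details the paper leaves implicit.
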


\begin{proof}
Without loss of generality, we can suppose that $P=P_I$ for $I\subseteq \Delta$ and let $\lambda_I$ be the cocharacter associated to $P_I$. The element $y:=\underset{t\rightarrow 0}{\lim}\, \lambda_I(t)x \lambda_I(t)^{-1}$ exists by definition of $P_I$, as used in (\ref{eq:parabolic}), and $y\in \overline{G\cdot x}$ by the way it is obtained, where $G\cdot x$ denotes the $G$-conjugacy orbit of $x$ and $\overline{G\cdot x}$ its closure. We see that this $y$ belongs to $L_I$. Indeed, for all $s \in \GG_m$, we have
\[
 \lambda(s) y \lambda(s)^{-1} = \lim_{t\to 0} \lambda(s) \lambda(t) x\lambda(t)^{-1}\lambda(s)^{-1} =
\lim_{t\to 0} \lambda(st) x \lambda(st)^{-1} =y
.\]
Hence, $y\in \overline{G\cdot x}\cap L_I$.
\end{proof}

Let us come back to the representation variety. Given a subset $I \subseteq \Delta$, we can consider the variety $\mathcal{R}_{L_I}^\ast$ of irreducible representations on $L_I$. Again, irreducibility is considered in $L_I$: a representation $\rho: \Gamma \to P_I$ is in $\mathcal{R}_{L_I}^\ast$ if there exists no proper parabolic subgroup $P$ of $L_I$ with $\rho(\Gamma) \subseteq P$.
Then, as an application of Proposition \ref{lem:orbit-parabolic}, we get the following result.

\begin{cor}\label{cor:intersection-core}
If $\rho \in \widehat{\mathcal{R}}_{{P}_I}^\ast$, then the Zariski closure of the $G$-conjugacy orbit of $\rho$ intersects $\mathcal{R}_{L_I}^\ast$.
\end{cor}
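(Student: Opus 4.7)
The plan is to reduce to the case where the starting representation actually lies in $\mathcal{R}_{P_I}^\ast$ and then to exhibit a specific limit point in the closure of its $G$-orbit that lies in $\mathcal{R}_{L_I}^\ast$, by running the one-parameter contraction from Proposition \ref{lem:orbit-parabolic} simultaneously on the image of every generator of $\Gamma$.

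First I would write $\rho = g \cdot \rho'$ with $\rho' \in \mathcal{R}_{P_I}^\ast$, which is possible by the very definition $\widehat{\mathcal{R}}_{P_I}^\ast = G \cdot \mathcal{R}_{P_I}^\ast$; since $\overline{G\cdot \rho} = \overline{G\cdot \rho'}$, it suffices to prove the claim for $\rho'$. Let $\lambda_I : \GG_m \to T$ be the cocharacter associated to $I$ as in (\ref{eq:parabolic}), and consider, for each $t \in \GG_m$, the automorphism $c_t(x) = \lambda_I(t) x \lambda_I(t)^{-1}$ of $G$. Because $c_t$ is a group automorphism and $\rho'$ takes values in $P_I$, the pointwise assignment $\tilde{\rho}(\gamma) := \lim_{t\to 0} c_t(\rho'(\gamma))$ exists for every $\gamma \in \Gamma$ by Proposition \ref{lem:orbit-parabolic}, and lies in $L_I$. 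Moreover, applying $c_t$ to all generators simultaneously shows that $\tilde{\rho}$ is itself a group homomorphism $\Gamma \to L_I$, and $\tilde{\rho} \in \overline{G\cdot \rho'}$ since it is obtained as a limit inside a single $G$-orbit in the affine variety $\mathcal{R}_G$.

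The main step, and the one that requires some care, is to verify that $\tilde{\rho}$ is actually $L_I$-irreducible. The key observation is that $\tilde{\rho}$ coincides with $\pi \circ \rho'$, where $\pi : P_I \twoheadrightarrow L_I = P_I/U_I$ is the projection onto the Levi quotient by the unipotent radical $U_I$. Indeed, $\lambda_I(t)$ fixes $L_I$ elementwise and contracts $U_I$ to the identity as $t\to 0$, so writing any $x \in P_I$ uniquely as $x = u\ell$ with $u\in U_I$ and $\ell \in L_I$ yields $c_t(u\ell) = c_t(u)\ell \to \ell = \pi(x)$. Now suppose, toward a contradiction, that $\tilde{\rho}$ is $L_I$-reducible: then $\tilde{\rho}(\Gamma) \subseteq P'$ for some proper parabolic $P' \subsetneq L_I$, whence $\rho'(\Gamma) \subseteq \pi^{-1}(P') = P'\cdot U_I$. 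By standard Levi-decomposition theory (as reviewed in Section \ref{sec:reductive-groups-general}), $P'\cdot U_I$ is a proper parabolic subgroup of $P_I$ (and hence of $G$), contradicting the assumption that $\rho' \in \mathcal{R}_{P_I}^\ast$.

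I expect the main obstacle to be this last identification: both the equality $\tilde{\rho} = \pi \circ \rho'$ and the structural fact that the preimage of a parabolic of $L_I$ under the Levi projection is a parabolic of $P_I$ are clean, but they rely on the detailed description of $P_I = L_I \ltimes U_I$ recalled earlier. Once these two ingredients are in place, the statement follows immediately: $\tilde{\rho}$ provides a concrete element of $\overline{G\cdot \rho} \cap \mathcal{R}_{L_I}^\ast$, proving the intersection is non-empty.
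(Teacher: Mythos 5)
Your proposal is correct, and its first half coincides with the paper's proof: both arguments apply the one-parameter contraction of Proposition \ref{lem:orbit-parabolic} simultaneously to the images of the generators of $\Gamma$ to produce a limit homomorphism $\tilde{\rho} \in \mathcal{R}_{L_I} \cap \overline{G\cdot\rho}$. Where you genuinely diverge is in verifying that this limit is $L_I$-irreducible. The paper argues abstractly: it observes that $\tilde{\rho}$ still lies in $\widehat{\mathcal{R}}_{P_I}^\ast$ because that stratum is orbitwise-closed, then conjugates a hypothetical proper parabolic $P' \subsetneq L_I$ containing $\tilde{\rho}(\Gamma)$ into a standard parabolic of $L_I$, associates to it a proper subset $I' \subsetneq I$ via Remark \ref{rem:root-datum-levi}, and contradicts membership in $\widehat{\mathcal{R}}_{P_I}^\ast$. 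You instead identify the limit explicitly as $\pi\circ\rho'$ for the Levi projection $\pi: P_I \to P_I/U_I \cong L_I$ (using that $\lambda_I$ takes values in $T_I$, so it centralizes $L_I$ and contracts the unipotent radical), and pull $P'$ back to the proper parabolic $\pi^{-1}(P') = P'\ltimes U_I \subsetneq P_I$ of $G$, contradicting $\rho' \in \mathcal{R}_{P_I}^\ast$ directly. Your route buys a concrete description of the limit point and a contradiction against the unhatted set $\mathcal{R}_{P_I}^\ast$, whose definition is unambiguous (the paper's final step, contradicting membership in the $G$-saturated set $\widehat{\mathcal{R}}_{P_I}^\ast$, is slightly more delicate since those strata are not a priori disjoint); the price is that you invoke the Levi decomposition $P_I = U_I \rtimes L_I$ and the standard correspondence between parabolic subgroups of $L_I$ and parabolic subgroups of $G$ contained in $P_I$, facts which are classical but which the paper does not set up explicitly. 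Both arguments are sound.
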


\begin{proof}
    Let $\rho \in \widehat{\mathcal{R}}_{{P}_I}^\ast$ so that $g\rho g^{-1} \in \mathcal{R}_{{P}_I}^\ast$ for a certain $g \in G$. By applying Proposition \ref{lem:orbit-parabolic} to the image of each of the generators of $\Gamma$, we get that $g\rho g^{-1}$, and thus $\rho$, has an element $\rho' \in \mathcal{R}_{L_I}$ in the Zariski closure of its orbit. Notice that, since $\widehat{\mathcal{R}}_{{P}_I}^\ast$ is orbitwise-closed, we also have $\rho' \in \widehat{\mathcal{R}}_{{P}_I}^\ast$.

    Now, suppose that this representation $\rho'$ is not $L_I$-irreducible. This means that there exists a proper parabolic subgroup $P'$ of $L_I$ containing the image of $\rho'$. But this implies that, for some $\ell \in L_I$, $\ell P' \ell^{-1}$ is a standard parabolic subgroup of $L_I$ and thus corresponds to a subset $I' \subseteq I$, since $I$ is the set of simple roots of $L_I$ by Remark \ref{rem:root-datum-levi}. In particular, the image of $\ell\rho'\ell^{-1}$ is contained in the standard parabolic subgroup $P_{I'} \supseteq P'$ of $G$ corresponding to $I' \subseteq \Delta$. But this contradicts the fact that $\ell\rho'\ell^{-1}$, and thus $\rho'$, lie in $\widehat{\mathcal{R}}_{{P}_I}^\ast$.
\end{proof}

\begin{prop}\label{prop:polystability}
Every element of $\mathcal{R}_{L_I}^\ast \subseteq \mathcal{R}_G$ is polystable for the adjoint action of $G$.
\end{prop}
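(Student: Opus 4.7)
The plan is to apply the Hilbert--Mumford criterion: since $\mathcal{R}_G$ is affine, $\rho$ is polystable for the $G$-action precisely when its $G$-orbit in $\mathcal{R}_G$ is Zariski closed. I would argue by contradiction. Assume $\rho \in \mathcal{R}_{L_I}^\ast$ has a non-closed $G$-orbit. By the Hilbert--Mumford criterion combined with Kempf's theorem on the canonical destabilizing parabolic, there is a cocharacter $\lambda \colon \GG_m \to G$ with $\rho(\Gamma) \subseteq P_\lambda$ and $\rho_\lambda := \lim_{t \to 0}\lambda(t)\rho\lambda(t)^{-1} \notin G \cdot \rho$, chosen so that the associated parabolic $P_\lambda$ is canonically attached to the orbit of $\rho$; in particular $P_\lambda$ is normalized by the stabilizer $Z_G(\rho(\Gamma))$.

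The key observation is that $T_I \subseteq Z_G(\rho(\Gamma))$: indeed, $\rho(\Gamma) \subseteq L_I = Z_G(T_I)$ means every element of $T_I$ commutes with the image of $\rho$. Hence $T_I \subseteq N_G(P_\lambda) = P_\lambda$. By the Levi--Malcev (Mostow) theorem, any reductive subgroup of $P_\lambda$ can be conjugated into the chosen Levi $L_\lambda = Z_G(\lambda)$ by an element of the unipotent radical $U_\lambda$; applied to the torus $T_I$, this yields $u \in U_\lambda$ with $T_I' := u T_I u^{-1} \subseteq L_\lambda$. Set $\tilde\rho := u\rho u^{-1}$ and $L_I' := uL_I u^{-1} = Z_G(T_I')$. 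The inclusion $T_I' \subseteq L_\lambda = Z_G(\lambda)$ forces $\lambda$ to commute with $T_I'$, so $\lambda(\GG_m) \subseteq Z_G(T_I') = L_I'$; that is, $\lambda$ is a cocharacter of the Levi $L_I'$, and $(L_I')_\lambda := L_I' \cap P_\lambda$ is the parabolic it determines inside $L_I'$.

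The argument then splits into two cases. If $\lambda$ is central in $L_I'$, then $(L_I')_\lambda = L_I'$ and conjugation by $\lambda(t)$ fixes $\tilde\rho$ pointwise; expanding, for each $\gamma \in \Gamma$,
\[
\lambda(t)\rho(\gamma)\lambda(t)^{-1} = \bigl(\lambda(t) u^{-1}\lambda(t)^{-1}\bigr)\,\tilde\rho(\gamma)\,\bigl(\lambda(t) u \lambda(t)^{-1}\bigr),
\]
and using that $u \in U_\lambda$ implies $\lambda(t) u \lambda(t)^{-1} \to e$ as $t \to 0$, one obtains $\rho_\lambda = u \rho u^{-1} \in G\cdot \rho$, contradicting the choice of $\lambda$. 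Otherwise $(L_I')_\lambda$ is a \emph{proper} parabolic of $L_I'$ containing $\tilde\rho(\Gamma)$; since conjugation by $u$ is a group isomorphism $L_I \to L_I'$ sending proper parabolics to proper parabolics, this contradicts the $L_I$-irreducibility of $\rho$.

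The main obstacle is securing the compatibility between $\lambda$ and $T_I$ encoded in the inclusion $T_I \subseteq P_\lambda$: this is exactly the content of Kempf's theorem on the canonicity of the destabilizing parabolic, without which one cannot arrange that $\lambda$ factor through a Levi conjugate of $L_I$ and then reduce the problem to the $L_I$-irreducibility hypothesis. Once this compatibility is in place, the remaining analysis is purely structural, using the interplay between Levi subgroups, parabolics, and their centralizers.
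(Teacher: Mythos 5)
Your proof is correct, but it takes a genuinely different route from the paper's. The paper verifies the Hilbert--Mumford polystability criterion directly: given \emph{any} one-parameter subgroup $\lambda$ for which $\lim_{t\to 0}\lambda(t)\rho\lambda(t)^{-1}$ exists, it conjugates $\lambda$ into the fixed maximal torus so that it becomes a standard cocharacter $\lambda_J$ for some $J\subseteq\Delta$, deduces that $I\subseteq J$ and hence $T_J\subseteq T_I$, and concludes that $\lambda_J$ already centralizes $\rho(\Gamma)\subseteq L_I=Z_G(T_I)$ --- no contradiction argument and no optimality of $\lambda$ is needed. You instead argue by contradiction and invoke two heavier inputs: Kempf's theorem that the optimal destabilizing parabolic $P_\lambda$ is normalized by (hence contains) the stabilizer $Z_G(\rho(\Gamma))\supseteq T_I$, and the Mostow conjugacy theorem to push $T_I$ into the Levi factor $Z_G(\lambda)$. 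This is precisely what lets you avoid moving $\lambda$ into the fixed torus: you get for free that $\lambda$ factors through a $U_\lambda$-conjugate $L_I'$ of $L_I$, and the dichotomy central/non-central in $L_I'$ cleanly separates the two possible failures (the limit stays in the orbit, versus a proper parabolic of $L_I'$ containing $\tilde\rho(\Gamma)$, contradicting irreducibility). The trade-off is the reliance on Kempf's canonicity, which is substantially stronger machinery than the paper uses; in exchange, your argument handles the destabilizing direction without the paper's tacit conjugation step (the paper's assertion that $\lim_{t\to 0}\lambda_J(t)\rho\lambda_J(t)^{-1}$ exists really requires replacing $\rho$ by its $g$-conjugate first). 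Both proofs ultimately rest on the same structural fact: $L_I$-irreducibility forces any destabilizing one-parameter subgroup to be central in a conjugate of $L_I$.
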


\begin{proof}
    We will use the Hilbert-Mumford criterion for polystability, as can be found in \cite[Theorem 3.3]{saiz2021geometric} and \cite{newstead1978introduction}. In this case, it states that $\rho \in \mathcal{R}_G$ is polystable if and only if, for any $1$-parametric subgroup $\lambda: \GG_m \to G$ such that $\lim_{t\to 0} \lambda(t)\rho\lambda(t)^{-1}$ exists, there exists an element $g \in G$ such that $\lambda(t) g\rho g^{-1} \lambda(t)^{-1} = g\rho g^{-1}$ for all $t$.

    In this way, take $\rho \in \mathcal{R}_{L_I}^\ast$ with $I \subseteq \Delta$ fixed, and suppose that $\lambda$ is a $1$-parametric subgroup satisfying that $\lim_{t\to 0} \lambda(t)\rho\lambda(t)^{-1}$ exists. Since the image of $\lambda$ is an abelian subgroup of $G$, it belongs to a maximal torus and, since all the maximal tori are conjugated, there exists $g \in G$ such that $g^{-1}\lambda g: \GG_m \to T$ is a cocharacter, where $T$ is the fixed maximal torus. In particular, $g^{-1}\lambda g = \lambda_J$ is the cocharacter associated to a certain subset $J \subseteq \Delta$. Notice that since $\lim_{t\to 0} \lambda(t)\rho\lambda(t)^{-1}$ exists, then also $\lim_{t\to 0} \lambda_J(t)\rho\lambda_J(t)^{-1}$ exists.
    
    Now, recall that $\rho(\Gamma) \subseteq P_I$, the parabolic subgroup of those $g \in G$ for which $\lim_{t\to 0} \lambda_I(t)g \lambda_I(t)^{-1}$ exists, where $\lambda_I$ is the cocharacter associated to $I$. This implies that $J \supseteq I$ or, equivalently, $T_J \subseteq T_I$ for the associated subtori. But $L_I = Z_G(T_I)$ and $\rho(\Gamma) \subseteq L_I$, so in particular $\lambda_J(t) \rho \lambda_J(t)^{-1} = \rho$. Unraveling the definition of $\lambda_J$, this implies that $\lambda(t) g\rho g^{-1} \lambda(t)^{-1} = g\rho g^{-1}$, as we wanted to prove.
\end{proof}

\begin{rem}
    As Corollary of the upcoming Theorem \ref{thm:core-key}, it will turn out that the polystable points of $\mathcal{R}_G$ are exactly the orbits of points of $\mathcal{R}_{L_I}^\ast$ for some $I\subseteq \Delta$.
\end{rem}

\begin{defn} Let $L$ be a Levi subgroup. A closed subgroup $H \subseteq L$ is said to be \emph{sufficiently representative} of $L$ if, for any Levi subgroup $L' \subseteq L$ with $H \subseteq L'$, we must have that $L' = L$.
\end{defn}

\begin{prop}\label{prop:normalizer-conjugation}
Let $L$ be Levi subgroup with associated torus $T$ and suppose that $H \subseteq L$ is a sufficiently representative subgroup of $L$. If $g \in G$ satisfies that $gHg^{-1} \subseteq L$, then there exists $g_0 \in N_G(T)$ such that $g_0hg_0^{-1} = ghg^{-1}$ for all $h \in H$.  
\end{prop}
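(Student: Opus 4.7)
The plan is to find $z \in Z_G(H)$ such that $gz^{-1}$ normalizes $T$; setting $g_0 := gz^{-1}$ then yields $g_0 h g_0^{-1} = gz^{-1}hzg^{-1} = ghg^{-1}$ for every $h \in H$, because $z$ centralizes $H$. The natural place to search for such a $z$ is the identity component $Z_G(H)^{\circ}$.

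First I would check that both $T$ and its conjugate $T^{g} := g^{-1}Tg$ lie in $Z_G(H)^{\circ}$. For $T$, note that $T = Z(L)^{\circ}$ centralizes $L$, hence it centralizes $H \subseteq L$; since $T$ is connected, $T \subseteq Z_G(H)^{\circ}$. For $T^{g}$, the hypothesis $gHg^{-1} \subseteq L$ rewrites as $H \subseteq g^{-1}Lg = Z_G(T^{g})$, so $T^{g}$ centralizes $H$, and again $T^{g} \subseteq Z_G(H)^{\circ}$ by connectedness.

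The crucial step, where sufficient representativity enters, is to show that $T$ is already a \emph{maximal} torus of $Z_G(H)^{\circ}$. Let $\widetilde{T}$ be any torus of $Z_G(H)^{\circ}$ containing $T$; then $Z_G(\widetilde{T}) \subseteq Z_G(T) = L$, and $\widetilde{T} \subseteq Z_G(H)$ implies $H \subseteq Z_G(\widetilde{T})$. Since the centralizer of a torus in a reductive group is a Levi subgroup, $Z_G(\widetilde{T})$ is a Levi subgroup of $G$ sitting between $H$ and $L$. Sufficient representativity forces $Z_G(\widetilde{T}) = L$, hence $\widetilde{T} \subseteq Z(L)$, and by connectedness $\widetilde{T} \subseteq Z(L)^{\circ} = T$, so $\widetilde{T} = T$.

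Finally, because $T^{g}$ has the same dimension as $T$, it is likewise a maximal torus of $Z_G(H)^{\circ}$. Conjugacy of maximal tori in the connected algebraic group $Z_G(H)^{\circ}$ (Borel's theorem, valid in characteristic zero) produces $z \in Z_G(H)^{\circ}$ with $zT^{g}z^{-1} = T$; rearranging, $(gz^{-1}) T (gz^{-1})^{-1} = T$, so $g_0 := gz^{-1}$ normalizes $T$ and acts on $H$ by the same conjugation as $g$. The main obstacle is the third paragraph: one has to translate the intrinsic condition of sufficient representativity of $H$ in $L$ into the external statement that $T$ admits no enlargement inside $Z_G(H)^{\circ}$; once this translation is secured, the remainder of the argument is standard reductive-group theory.
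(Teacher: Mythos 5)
Your proposal is correct and follows essentially the same route as the paper's proof: embed both $T$ and $g^{-1}Tg$ in $Z_G(H)$, use sufficient representativity to show $T$ is a maximal torus there, and conjugate by a centralizing element to land in $N_G(T)$. Your treatment is in fact slightly more careful at the step $Z_G(\widetilde{T}) = L \Rightarrow \widetilde{T} = T$, which you justify via $\widetilde{T} \subseteq Z(L)^{\circ} = T$ rather than asserting it directly.
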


\begin{proof}
    Recall that $L = Z_G(T)$, for $T$ the torus associated to $L$. Consider $\tilde{H} := Z_G(H)$ the centralizer of $H$. Since $T$ is the connected component of the identity of $Z_G(L)$, we have $T \subseteq Z_G(L)$, which jointly with $Z_G(L) \subseteq Z_G(H) = \tilde{H}$ implies that $T \subseteq \tilde{H}$. In fact, $T$ is a maximal torus of $\tilde{H}$ since, otherwise, a larger torus $T \subseteq T' \subseteq \tilde{H}'$ would satisfy
    $$
        Z_G(T) = L \supseteq Z_G(T') \supseteq Z_G(\tilde{H}) = Z_G(Z_G(H)) \supseteq H.
    $$
    Hence, since $H$ is a sufficiently representative subgroup of $L$, then the Levi subgroup $Z_G(T')$ coincides with $L = Z_G(T)$, implying that $T'=T$.

    Additionally, since $gHg^{-1} \subseteq L$, then $H \subseteq g^{-1}Lg$ and thus
    $$
        \tilde{H} = Z_G(H) \supseteq Z_G(g^{-1}Lg) = g^{-1}Z_G(L)g \supseteq g^{-1}Tg.
    $$
    Therefore, $g^{-1}Tg$ is another maximal torus of $\tilde{H}$. Since any two maximal tori are conjugate, there exists $\tilde{h} \in \tilde{H}$ such that $\tilde{h}g^{-1} T g\tilde{h}^{-1}=T$.
    
    In this situation, our desired element is $g_0 := g\tilde{h}^{-1} \in N_G(T)$, which satisfies that, for any $h \in H$, we have $g_0hg_0^{-1} = g\tilde{h}^{-1}h\tilde{h}g^{-1} = ghg^{-1}$, since $\tilde{h} \in \tilde{H} = Z_G(H)$.
\end{proof}

Indeed, by slightly adapting the previous proof, we can obtain a related result.

\begin{prop}\label{prop:conjugate-reps}
    Let $L$ and $L'$ be Levi subgroups and suppose that $H \subseteq L$ is a sufficiently representative subgroup of $L$. If, for some $g \in G$, we have $gHg^{-1} \subseteq L'$ and $gHg^{-1}$ is a sufficiently representative subgroup of $L'$, then $L$ and $L'$ are conjugate. 
\end{prop}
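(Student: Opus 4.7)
The statement generalizes Proposition \ref{prop:normalizer-conjugation} from the case $L = L'$ to arbitrary pairs of Levi subgroups, and the proof should parallel it almost step by step. Let $T$ and $T'$ denote the tori associated to $L$ and $L'$, so that $L = Z_G(T)$ and $L' = Z_G(T')$. The plan is to exhibit a single element of $G$ that conjugates $T$ into $T'$; taking centralizers then immediately gives the conjugacy of the Levi subgroups.

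\textbf{Key steps.} First, as in the proof of Proposition \ref{prop:normalizer-conjugation}, consider the centralizer $\tilde H := Z_G(H)$. The containment $T \subseteq Z_G(L) \subseteq Z_G(H) = \tilde H$ holds because $T$ is the connected component of the identity of $Z_G(L)$ and $H \subseteq L$. The hypothesis that $H$ is sufficiently representative of $L$ is used exactly as before to show that $T$ is a \emph{maximal} torus of $\tilde H$: if $T \subsetneq T' \subseteq \tilde H$ were a larger torus, then $Z_G(T'') \subseteq L$ would be a Levi subgroup containing $H$, forcing $Z_G(T'') = L$ and hence $T'' = T$.

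Second, from $gHg^{-1} \subseteq L'$ we derive $H \subseteq g^{-1}L'g$, and therefore
\[
\tilde H = Z_G(H) \supseteq Z_G(g^{-1}L'g) = g^{-1}Z_G(L')g \supseteq g^{-1}T'g,
\]
so $g^{-1}T'g$ is another torus of $\tilde H$. The crucial new ingredient, compared with Proposition \ref{prop:normalizer-conjugation}, is the extra hypothesis that $gHg^{-1}$ is sufficiently representative of $L'$: applying the same argument as in the first step to the pair $(gHg^{-1}, L')$ shows that $T'$ is maximal in $Z_G(gHg^{-1}) = g \tilde H g^{-1}$, and hence $g^{-1}T'g$ is maximal in $\tilde H$. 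So $T$ and $g^{-1}T'g$ are both maximal tori of $\tilde H$ and thus conjugate: there exists $\tilde h \in \tilde H$ with $\tilde h (g^{-1} T' g) \tilde h^{-1} = T$.

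\textbf{Conclusion.} Setting $g_0 := g \tilde h^{-1}$ gives $g_0 T g_0^{-1} = T'$, and therefore $g_0 L g_0^{-1} = g_0 Z_G(T) g_0^{-1} = Z_G(T') = L'$, as desired. \textbf{Main obstacle.} The only subtle point is recognizing that the symmetric hypothesis on $gHg^{-1}$ is precisely what is needed to promote $g^{-1}T'g$ from a mere torus in $\tilde H$ to a maximal torus in $\tilde H$; without it, the conjugacy of maximal tori step would fail. Once one sees this, the proof is a straightforward symmetrization of the previous one.
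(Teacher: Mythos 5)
Your proof is correct and follows essentially the same route as the paper's: both pass to $\tilde H = Z_G(H)$, show $T$ and $g^{-1}T'g$ are maximal tori there, conjugate them, and take centralizers. In fact you are slightly more explicit than the paper about where the hypothesis that $gHg^{-1}$ is sufficiently representative of $L'$ enters (to get maximality of $g^{-1}T'g$ in $\tilde H$, without which one would only conclude $g^{-1}T'g$ is conjugate into a subtorus of $T$); just fix the notational clash in your first step, where the auxiliary larger torus is written both as $T'$ and $T''$ while $T'$ already denotes the torus of $L'$.
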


\begin{proof}
    Let $T$ and $T'$ be the tori associated to $L$ and $L'$, respectively. As in the proof of Proposition \ref{prop:normalizer-conjugation} take $\tilde{H} := Z_G(H)$. By the same argument as above, we have that $T \subseteq \tilde{H}$ is a maximal torus. Furthermore, since $gHg^{-1} \subseteq L' = Z_G(T')$, we also have that $g^{-1}T'g \subseteq \tilde{H}$ is a maximal torus. Hence, both $g^{-1}T'g$ and $T$ are conjugate (by an element of $\tilde{H} = Z_G(H)$), implying that also $T'$ and $T$ are so. Given that conjugation commutes with centralizers, we obtain that $L'$ and $L$ are conjugate.
\end{proof}

\begin{cor}\label{cor:core-subgroup}
Consider $\rho \in \mathcal{R}_{L_I}^\ast$. If $g \rho g^{-1} \in \mathcal{R}_{L_I}^\ast$ for a certain $g \in G$, then $g \rho g^{-1} = g_0 \rho g_0^{-1}$ for some $g_0 \in N_I$.
\end{cor}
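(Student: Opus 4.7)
The plan is to apply Proposition \ref{prop:normalizer-conjugation} directly, taking $L = L_I$, its associated torus $T = T_I$, and $H = \rho(\Gamma) \subseteq L_I$. Once the hypotheses are verified, the conclusion will supply an element $g_0 \in N_G(T_I) = N_I$ such that $g_0 h g_0^{-1} = g h g^{-1}$ for every $h \in \rho(\Gamma)$, which is exactly the statement $g_0 \rho g_0^{-1} = g \rho g^{-1}$ as morphisms from $\Gamma$.

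There are two hypotheses to verify. First, $g H g^{-1} \subseteq L_I$: this is immediate because the assumption $g\rho g^{-1} \in \mathcal{R}_{L_I}^\ast \subseteq \mathcal{R}_{L_I}$ is precisely the statement that the image of the conjugated representation lies in $L_I$. Second, and this is the non-trivial input, we must show that $H = \rho(\Gamma)$ is a sufficiently representative subgroup of $L_I$. This is where the $L_I$-irreducibility hypothesis $\rho \in \mathcal{R}_{L_I}^\ast$ enters.

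To establish sufficient representativeness, suppose $L' \subseteq L_I$ is a Levi subgroup with $\rho(\Gamma) \subseteq L'$. The key fact I will invoke is that every proper Levi subgroup of a reductive group arises as the Levi factor of some proper parabolic subgroup. Hence, if $L' \subsetneq L_I$ were proper, there would exist a proper parabolic $P' \subsetneq L_I$ with $L' \subseteq P'$, and then $\rho(\Gamma) \subseteq P'$ would contradict the fact that $\rho$ is irreducible as an $L_I$-representation. Therefore $L' = L_I$, and $H$ is sufficiently representative, as required.

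The main (and only) subtle point is the above standard fact from the theory of reductive groups, but since we are working inside the Levi subgroup $L_I$, which is itself reductive with its own root datum $(X^\ast(T), \Phi_I, X_\ast(T), \Phi_I^\vee)$ as described in Remark \ref{rem:root-datum-levi}, proper Levi subgroups of $L_I$ correspond bijectively (up to conjugation) with proper subsets of $I$, each of which determines a proper standard parabolic subgroup of $L_I$ containing the corresponding Levi. Thus the implication goes through and the corollary follows directly from Proposition \ref{prop:normalizer-conjugation}.
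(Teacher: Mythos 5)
Your proof is correct and takes essentially the same route as the paper: both apply Proposition \ref{prop:normalizer-conjugation} with $H$ the image of $\rho$, after checking that $gHg^{-1}\subseteq L_I$ and that $H$ is sufficiently representative of $L_I$. The only differences are that the paper takes $H$ to be the \emph{Zariski closure} of the subgroup generated by the $\rho(\gamma_i)$ (since sufficient representativeness is defined for closed subgroups — a harmless fix you should incorporate), and that the paper merely asserts sufficient representativeness, whereas you correctly justify it via the fact that a proper Levi subgroup of $L_I$ sits inside a proper parabolic subgroup of $L_I$, contradicting $L_I$-irreducibility.
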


\begin{proof}
Let $\gamma_1, \ldots, \gamma_s \in \Gamma$ be set of generators of $\Gamma$ and take $H$ to be the Zariski closure of the subgroup generated by their images $\rho(\gamma_1), \ldots, \rho(\gamma_s) \subseteq L_I$. Since $\rho \in \mathcal{R}_{L_I}^\ast$, we have that $H$ is a sufficiently representative subgroup of $L_I$. Since the element $g$ satisfies that $g\rho(\gamma_i)g^{-1} \in L_I$ for all $i$, then $gHg^{-1} \subseteq L_I$ and the result follows from Proposition \ref{prop:normalizer-conjugation}.
\end{proof}

\begin{lem}\label{lem:intersection-G-orbits}
Suppose that $\rho_1, \rho_2 \in \mathcal{R}_{L_I}^\ast$ satisfy that $\overline{G \cdot \rho_1} \cap \overline{G \cdot \rho_2} \neq \emptyset$. Then, there exists $g_0 \in N_I$ such that $g_0 \cdot \rho_1 = \rho_2$.
\end{lem}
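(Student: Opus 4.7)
The plan is to reduce the statement to Corollary \ref{cor:core-subgroup} by first upgrading the hypothesis ``intersecting orbit closures'' to ``$\rho_1$ and $\rho_2$ are $G$-conjugate''. The upgrade step uses polystability, and the conjugation step then takes place inside $L_I$.

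First I would invoke Proposition \ref{prop:polystability} to deduce that both $\rho_1$ and $\rho_2$ are polystable for the $G$-action on $\mathcal{R}_G$. By the standard GIT characterization of polystability (property~(4) in the definition of good quotient applied to the GIT quotient $\mathcal{R}_G \sslash G$), the $G$-orbits $G\cdot \rho_1$ and $G\cdot \rho_2$ are therefore closed in $\mathcal{R}_G$. Since two closed $G$-invariant subsets with nonempty intersection of closures already intersect, the hypothesis $\overline{G\cdot \rho_1}\cap \overline{G\cdot \rho_2}\neq \emptyset$ forces $G\cdot \rho_1 = G\cdot \rho_2$. In particular, there exists some $g\in G$ with $g\rho_1 g^{-1} = \rho_2$.

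Now both $\rho_1$ and $g\rho_1 g^{-1} = \rho_2$ lie in $\mathcal{R}_{L_I}^\ast$, which is precisely the situation of Corollary \ref{cor:core-subgroup}. Applying that corollary produces an element $g_0\in N_I$ such that $g_0 \rho_1 g_0^{-1} = g\rho_1 g^{-1} = \rho_2$, which is the desired conclusion.

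The only non-trivial step is the passage from intersecting orbit closures to equality of orbits, and this is entirely controlled by Proposition \ref{prop:polystability}; the genuinely subtle content (that $G$-conjugacy inside $L_I$ can be realized by the finite-index overgroup $N_I$) was already isolated and proved in Corollary \ref{cor:core-subgroup}, via Proposition \ref{prop:normalizer-conjugation}. So I do not expect any real obstacle here; the lemma is essentially a bookkeeping assembly of the previously established polystability and normalizer-conjugation results.
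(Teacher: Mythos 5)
Your proof is correct and follows essentially the same route as the paper's: use Proposition \ref{prop:polystability} to conclude the $G$-orbits of $\rho_1$ and $\rho_2$ are closed, deduce from the intersecting closures that the two representations are $G$-conjugate, and then apply Corollary \ref{cor:core-subgroup} to replace the conjugating element by one in $N_I$. No issues.
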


\begin{proof}
By Proposition \ref{prop:polystability}, the points of $\mathcal{R}_{L_I}^\ast$ are polystable, and thus their orbits are closed. In particular, this means that any $\rho \in \overline{G \cdot \rho_1} \cap \overline{G \cdot \rho_2} = {G \cdot \rho_1} \cap {G \cdot \rho_2}$ satisfies $\rho = g_1 \rho_1 g_1^{-1} = g_2 \rho_2 g_2^{-1}$ for certain $g_1, g_2 \in G$. Hence, the existence of these elements implies that $g_2^{-1}g_1 \rho_1 g_1^{-1}g_2 = \rho_2$, so by Corollary \ref{cor:core-subgroup}, there exists $g_0 \in N_I$ such that $g_0 \cdot \rho_1 = g_0\rho_1 g_0^{-1} = \rho_2$, as desired.
\end{proof}

\begin{thm}\label{thm:core-key}
The pair $(\mathcal{R}_{L_I}^\ast, N_I)$ is a core for the action of $G$ on $\widehat{\mathcal{R}}_{P_I}^\ast$.
\end{thm}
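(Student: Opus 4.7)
The plan is to verify directly the three defining properties of a core from Definition \ref{defn:pseudo-quotient}, with $X = \widehat{\mathcal{R}}_{P_I}^\ast$, $Y = \mathcal{R}_{L_I}^\ast$ and $H = N_I$. For property (i), that $\mathcal{R}_{L_I}^\ast$ is orbitwise-closed for the $N_I$-action, the key observations are that $N_I = N_G(L_I)$ acts on $L_I$ by automorphisms and hence permutes its parabolic subgroups, thereby preserving $L_I$-irreducibility and the variety $\mathcal{R}_{L_I}^\ast$; and that every $\rho \in \mathcal{R}_{L_I}^\ast$ is $L_I$-stable (being irreducible), so $L_I \cdot \rho$ is closed in $\mathcal{R}_{L_I}$. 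Then the $N_I$-orbit of $\rho$, being a finite union of closed $L_I$-orbits since $W_I = N_I/L_I$ is finite, is itself closed in $\mathcal{R}_{L_I}^\ast$. Property (ii) is nothing but Corollary \ref{cor:intersection-core}.

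The substantive work lies in property (iii). Given disjoint closed $N_I$-invariant subsets $W_1, W_2 \subseteq \mathcal{R}_{L_I}^\ast$, I argue by contradiction: if $\rho \in \overline{G \cdot W_1} \cap \overline{G \cdot W_2}$, then $\overline{G \cdot \rho}$ is contained in both closures since each is closed and $G$-invariant. Because $\widehat{\mathcal{R}}_{P_I}^\ast$ is orbitwise-closed and $\rho \in \widehat{\mathcal{R}}_{P_I}^\ast$, Corollary \ref{cor:intersection-core} produces $\sigma \in \overline{G \cdot \rho} \cap \mathcal{R}_{L_I}^\ast$; by Proposition \ref{prop:polystability}, $G \cdot \sigma$ is closed, so $G \cdot \sigma \subseteq \overline{G \cdot \rho} \subseteq \overline{G \cdot W_1} \cap \overline{G \cdot W_2}$. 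It therefore suffices to prove that, for any closed $N_I$-invariant $W \subseteq \mathcal{R}_{L_I}^\ast$, if $\sigma \in \mathcal{R}_{L_I}^\ast$ satisfies $G \cdot \sigma \subseteq \overline{G \cdot W}$ then $\sigma \in W$; applied to $W = W_1$ and $W = W_2$ this forces $\sigma \in W_1 \cap W_2 = \emptyset$, the desired contradiction.

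To establish this key assertion I plan to pass to the good GIT quotient $\pi : \mathcal{R}_G \to \mathcal{X}_G$. By Lemma \ref{lem:intersection-G-orbits}, the restriction of $\pi$ to $\mathcal{R}_{L_I}^\ast$ identifies precisely the $N_I$-orbits, so $\pi(W)$ is in bijection with the geometric quotient $W/N_I$. Continuity of $\pi$ places $\pi(\sigma) \in \overline{\pi(W)}$, and combining the good-quotient property on $\mathcal{R}_G$ with the affine $N_I$-GIT quotient on $\mathcal{R}_{L_I}$ (where $W$ sits as a closed $N_I$-invariant subset of the open locus $\mathcal{R}_{L_I}^\ast$) will force $\pi(\sigma) \in \pi(W)$. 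Applying Corollary \ref{cor:core-subgroup} together with the $N_I$-invariance of $W$ then yields $\sigma \in W$. The main obstacle is precisely this closure step: upgrading the orbit-level identification of Lemma \ref{lem:intersection-G-orbits} to a closure-level statement, which amounts to showing that the natural morphism $\mathcal{R}_{L_I} /\!\!/ N_I \to \mathcal{X}_G$ is finite onto its image on the locus of interest; here the crucial input is that $N_I$ is the full normalizer of $L_I$ rather than merely a subgroup.
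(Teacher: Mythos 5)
Your handling of conditions (i) and (ii) is essentially the paper's: the paper deduces (i) from Proposition \ref{prop:polystability} (points of $\mathcal{R}_{L_I}^\ast$ have closed $G$-orbits, hence closed $N_I$-orbits) together with $N_I$-invariance of $\mathcal{R}_{L_I}^\ast$, which is the same content as your ``closed $L_I$-orbits plus finiteness of $W_I$'' argument, and (ii) is Corollary \ref{cor:intersection-core} in both cases. Your reduction of (iii) is also correct as far as it goes: from $\rho\in\overline{G\cdot W_1}\cap\overline{G\cdot W_2}$ you produce $\sigma\in\overline{G\cdot\rho}\cap\mathcal{R}_{L_I}^\ast$ via Corollary \ref{cor:intersection-core}, note that $G\cdot\sigma$ is closed by Proposition \ref{prop:polystability}, and reduce everything to the single claim that a point of $\mathcal{R}_{L_I}^\ast\cap\overline{G\cdot W}$, with $W\subseteq\mathcal{R}_{L_I}^\ast$ closed and $N_I$-invariant, must already lie in $W$.

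That claim, however, is exactly where your proof stops, and it is the crux of the theorem. You defer it to the assertion that $\mathcal{R}_{L_I}\sslash N_I\to\mathcal{X}_G$ is ``finite onto its image on the locus of interest,'' which you neither formulate precisely nor prove; and even granting such finiteness it would not immediately suffice, because $W$ is closed only in the open locus $\mathcal{R}_{L_I}^\ast$, so its image in the affine quotient $\mathcal{R}_{L_I}\sslash N_I$ need not be closed, and you would still have to separate $\pi(\sigma)$ from the image of the boundary $\overline{W}\setminus W$, which consists of $L_I$-reducible representations. So there is a genuine gap at the decisive step of (iii). For comparison, the paper does not pass through the global quotient $\mathcal{X}_G$ at all: it reduces to \emph{pairs of individual orbits}, choosing $\rho_1\in C_1$ and $\rho_2\in C_2$ whose $G$-orbit closures contain $\rho$ (using that points of $C_i$ have closed $G$-orbits), observing that then $\rho'\in\overline{G\cdot\rho}\subseteq\overline{G\cdot\rho_1}\cap\overline{G\cdot\rho_2}$, and applying Lemma \ref{lem:intersection-G-orbits} --- whose proof is precisely Proposition \ref{prop:polystability} plus Corollary \ref{cor:core-subgroup} --- to conclude $\rho'\in C_1\cap C_2=\emptyset$. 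If you replace your quotient-space argument by this pointwise reduction (supplying the passage from membership in $\overline{G\cdot W}$ to membership in $\overline{G\cdot w}$ for a single $w\in W$), the proof closes without any finiteness statement about $\mathcal{R}_{L_I}\sslash N_I\to\mathcal{X}_G$.
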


\begin{proof}
Notice that, as $N_I = N_G(L_I)$, the subvariety $\mathcal{R}_{L_I}^\ast$ is closed for the action of $N_I$. We will check that it fulfils the requirements of Definition \ref{defn:pseudo-quotient}. For part (i), observe that $\mathcal{R}_{L_I}^\ast$ is polystable and $N_I$-invariant, so it is automatically orbitwise-closed. Part (ii) follows immediately from Corollary \ref{cor:intersection-core}.
For part (iii), let $C_1, C_2 \subseteq \mathcal{R}_{L_I}^\ast$ be two disjoint $W_I$-invariants closed sets, and suppose that $\rho \in \overline{G \cdot C_1} \cap \overline{G \cdot C_2}$. Since $\widehat{\mathcal{R}}_{P_I}^\ast$ is orbitwise-closed, we have that $\rho \in \widehat{\mathcal{R}}_{P_I}^\ast$.
However, by Corollary \ref{cor:intersection-core}, we have that $\overline{G \cdot \rho} \cap \mathcal{R}_{L_I}^\ast \neq \emptyset$, let say $\rho' \in \overline{G \cdot \rho} \cap \mathcal{R}_{I}^0$. This means that the closures of the $G$-orbits of the three points $\rho_1, \rho_2$ and $\rho'$ intersect, so the three of them are related by a $N_I$-action by Lemma \ref{lem:intersection-G-orbits}. But this is impossible since $C_1$ and $C_2$ are disjoint and $N_I$-invariant.
\end{proof}

\begin{cor}\label{cor:virtual-core}
For any $I \subseteq \Delta$, we have the equality of virtual classes of $\KVar$
$$
    \left[\widehat{\mathcal{R}}_{P_I}^\ast(\Gamma) \sslash G\right]= \left[\mathcal{R}_{L_I}^\ast(\Gamma) \sslash N_I\right].
$$
\end{cor}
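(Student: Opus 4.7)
The corollary is an essentially direct application of the core machinery developed in Section \ref{ssec:cores}. The strategy is to combine Theorem \ref{thm:core-key}, which establishes that $(\mathcal{R}_{L_I}^\ast, N_I)$ is a core for the $G$-action on $\widehat{\mathcal{R}}_{P_I}^\ast$, with Corollary \ref{cor:equality-pseudo-quotients}, which asserts that any two pseudo-quotients constructed from a core have the same virtual class in $\KVar$.

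To apply Corollary \ref{cor:equality-pseudo-quotients}, I would verify three ingredients. First, that $\widehat{\mathcal{R}}_{P_I}^\ast(\Gamma)$ admits a categorical quotient for the $G$-action: this follows from standard GIT applied to the locally closed $G$-invariant subvariety $\widehat{\mathcal{R}}_{P_I}^\ast \subseteq \mathcal{R}_G$, where $\mathcal{R}_G$ is affine, giving the good (hence categorical, hence pseudo-) quotient $\widehat{\mathcal{R}}_{P_I}^\ast \sslash G$. Second, that $\mathcal{R}_{L_I}^\ast$ is $N_I$-invariant: this is immediate since $N_I = N_G(L_I)$ normalizes $L_I$, and conjugation by an element of $N_I$ sends parabolic subgroups of $L_I$ to parabolic subgroups of $L_I$, thereby preserving $L_I$-irreducibility. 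Third, that $\mathcal{R}_{L_I}^\ast \sslash N_I$ is a pseudo-quotient: this holds because $N_I$ is reductive (its identity component is the reductive group $L_I$, and the quotient $W_I = N_I/L_I$ is finite), so standard GIT furnishes the good quotient.

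With these three pieces in place, Theorem \ref{thm:core-key} supplies the core structure, and Corollary \ref{cor:equality-pseudo-quotients} then yields at once the desired equality $\bigl[\widehat{\mathcal{R}}_{P_I}^\ast(\Gamma) \sslash G\bigr] = \bigl[\mathcal{R}_{L_I}^\ast(\Gamma) \sslash N_I\bigr]$ in $\KVar$. No genuine obstacle arises at this stage: all the substantive content was absorbed into the proof of Theorem \ref{thm:core-key} (in particular, the sufficient-representativity arguments of Propositions \ref{prop:normalizer-conjugation} and \ref{prop:conjugate-reps}, together with the polystability statement of Proposition \ref{prop:polystability}), and the present corollary merely repackages that work into the motivic language of pseudo-quotients.
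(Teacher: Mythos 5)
Your proposal is correct and follows essentially the same route as the paper: both proofs simply combine the core structure from Theorem \ref{thm:core-key} with Corollary \ref{cor:equality-pseudo-quotients}, observing that the respective GIT quotients exist and are in particular pseudo-quotients. The extra verifications you spell out ($N_I$-invariance of $\mathcal{R}_{L_I}^\ast$, reductivity of $N_I$, existence of the categorical quotient) are left implicit in the paper but are consistent with its argument.
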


\begin{proof}
    By Corollary \ref{cor:equality-pseudo-quotients}, the virtual classes coincide for any pseudo-quotient of ${\mathcal{R}}_{P_I}^\ast(\Gamma)$ with respect to the $G$-action and for any pseudo-quotient of $\mathcal{R}_{L_I}^\ast(\Gamma)$ for the $N_I$-action. In particular, the usual GIT-quotients $\widehat{\mathcal{R}}_{P_I}^\ast(\Gamma) \to \widehat{\mathcal{R}}_{P_I}^\ast(\Gamma) \sslash G$ and $\mathcal{R}_{L_I}^\ast(\Gamma) \to \mathcal{R}_{L_I}^\ast(\Gamma) \sslash N_I$ are pseudo-quotients and thus their virtual classes agree.
\end{proof}

Notice that the Weil group $W$ acts on the set $\Phi$ of roots. Now, on the collection $2^{\Delta}$ of subsets $I \subseteq \Delta$, we have an equivalence relation $\sim_W$ as follows: Given $I = \{\alpha_{i_1}, \ldots, \alpha_{i_s}\} \subseteq \Delta$ and $I' \subseteq \Delta$, then $I \sim_W I'$ if and only if there exists $\sigma \in W$ such that $\sigma \cdot I = \{\sigma \cdot \alpha_{i_1}, \ldots, \sigma \cdot \alpha_{i_s}\} = I'$. Denote by $2^{\Delta}/\sim_W$ the quotient of $2^\Delta$ by this relation.

Notice that if $I \sim_W I'$, then the associated standard Levi subgroups $L_I$ and $L_{I'}$ are conjugate. Indeed, if $\sigma \cdot I = I'$, then $\sigma L_I \sigma^{-1} = L_{I'}$. In particular, ${\mathcal{R}}_{L_I}^\ast(\Gamma)$ is isomorphic to $\mathcal{R}_{L_{I'}}^\ast(\Gamma)$, being the isomorphism exactly conjugation by $\sigma$, and the isomorphism is equivariant for the respective actions of $N_I$ and $N_{I'}$. In particular, the virtual class $\left[\mathcal{R}_{L_I}^\ast(\Gamma) \sslash N_I\right] \in \KVar$ is well defined for $I \in 2^{\Delta}/\sim_W$ by taking any representative.

\begin{thm}\label{thm:main-result}
For every reductive group $G$ and every finitely generated group $\Gamma$, we have that
$$
    \left[\mathcal{X}_G(\Gamma)\right] = \sum_{I \in 2^{\Delta}/\sim_W} \left[\mathcal{R}_{L_I}^\ast(\Gamma) \sslash N_I\right],
$$
where $N_I$ is the normalizer of the Levi subgroup $L_I$ associated to $I$.
\end{thm}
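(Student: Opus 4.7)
The plan is to upgrade the non-disjoint cover $\mathcal{R}_G(\Gamma) = \bigcup_{I \subseteq \Delta} \widehat{\mathcal{R}}_{P_I}^\ast(\Gamma)$ from equation (\ref{eq:decomposition-representation}) into a locally closed stratification indexed by $2^\Delta/\sim_W$, then apply the pseudo-quotient machinery stratum by stratum using Corollary \ref{cor:virtual-core} together with the cut-and-paste relations in $\KVar$.

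The technical heart of the argument is to show that the strata satisfy $\widehat{\mathcal{R}}_{P_I}^\ast = \widehat{\mathcal{R}}_{P_{I'}}^\ast$ when $I \sim_W I'$ and $\widehat{\mathcal{R}}_{P_I}^\ast \cap \widehat{\mathcal{R}}_{P_{I'}}^\ast = \emptyset$ otherwise. The first equality is immediate: if $w \cdot I = I'$ for some $w \in W$, then $wL_Iw^{-1} = L_{I'}$ and the standard parabolics $P_I, P_{I'}$ are $G$-conjugate, so the conjugacy classes $\mathcal{P}_I, \mathcal{P}_{I'}$ coincide and the associated strata are the same. For the converse, suppose $\rho \in \widehat{\mathcal{R}}_{P_I}^\ast \cap \widehat{\mathcal{R}}_{P_{I'}}^\ast$. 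By Corollary \ref{cor:intersection-core}, the closure of the $G$-orbit of $\rho$ meets both $\mathcal{R}_{L_I}^\ast$ and $\mathcal{R}_{L_{I'}}^\ast$; since points of these sets are polystable by Proposition \ref{prop:polystability}, their orbits are already closed, yielding actual $G$-conjugate representations $\rho_1 \in \mathcal{R}_{L_I}^\ast$ and $\rho_2 \in \mathcal{R}_{L_{I'}}^\ast$ inside the orbit of $\rho$. The Zariski closures of the subgroups generated by $\rho_1(\Gamma)$ and $\rho_2(\Gamma)$ are sufficiently representative subgroups of $L_I$ and $L_{I'}$ respectively (precisely because $\rho_1$ and $\rho_2$ are irreducible as Levi representations), so Proposition \ref{prop:conjugate-reps} forces $L_I$ and $L_{I'}$ to be $G$-conjugate. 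Lemma \ref{lem:conjugate-levi-weil} then upgrades this to Weyl-conjugacy, yielding $I \sim_W I'$.

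With this disjointness in hand, the cover becomes a stratification
\begin{equation*}
\mathcal{R}_G(\Gamma) = \bigsqcup_{[I] \in 2^\Delta/\sim_W} \widehat{\mathcal{R}}_{P_I}^\ast(\Gamma)
\end{equation*}
by $G$-invariant locally closed subvarieties. The very same argument, applied directly to polystable representatives, shows that two orbits in different strata cannot have intersecting closures, so the stratification descends to a disjoint decomposition of $\mathcal{X}_G(\Gamma)$ into the images of the $\widehat{\mathcal{R}}_{P_I}^\ast(\Gamma)$, each of which is a pseudo-quotient of its stratum for the $G$-action. The cut-and-paste relations in $\KVar$ combined with Corollary \ref{cor:virtual-core} then give
\begin{equation*}
[\mathcal{X}_G(\Gamma)] = \sum_{[I] \in 2^\Delta/\sim_W} \bigl[\widehat{\mathcal{R}}_{P_I}^\ast(\Gamma) \sslash G\bigr] = \sum_{[I] \in 2^\Delta/\sim_W} \bigl[\mathcal{R}_{L_I}^\ast(\Gamma) \sslash N_I\bigr],
\end{equation*}
which is the desired identity.

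The main obstacle is precisely the disjointness in step one: without quotienting by $\sim_W$, a single Levi class would be counted multiple times (once for each representative of its Weyl-orbit), and showing that these are the \emph{only} coincidences requires the full strength of the ``sufficiently representative'' machinery of Proposition \ref{prop:conjugate-reps} together with the Weyl-conjugacy reduction of Lemma \ref{lem:conjugate-levi-weil}. Once this is secured, the passage to virtual classes is essentially formal.
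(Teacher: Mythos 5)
Your overall architecture matches the paper's---decompose by parabolic type, compute each piece via the core $(\mathcal{R}_{L_I}^\ast, N_I)$ and Corollary \ref{cor:virtual-core}, and control coincidences via Proposition \ref{prop:conjugate-reps} and Lemma \ref{lem:conjugate-levi-weil}---and your disjointness argument across inequivalent classes is correct and is essentially the paper's. However, the first half of your ``technical heart'' is false. If $w \cdot I = I'$ with $I \neq I'$, the standard parabolic subgroups $P_I$ and $P_{I'}$ are \emph{not} $G$-conjugate: each conjugacy class of parabolic subgroups contains a \emph{unique} standard representative (as recalled in Section \ref{sec:reductive-groups}), so $\mathcal{P}_I \neq \mathcal{P}_{I'}$ and in general $\widehat{\mathcal{R}}_{P_I}^\ast \neq \widehat{\mathcal{R}}_{P_{I'}}^\ast$. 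Concretely, for $G = \GL_3$ take $I = \{\alpha_{12}\}$ and $I' = \{\alpha_{23}\}$, which are $\sim_W$-equivalent; $\widehat{\mathcal{R}}_{P_I}^\ast$ consists of representations admitting a $2$-dimensional invariant subspace with irreducible restriction, while $\widehat{\mathcal{R}}_{P_{I'}}^\ast$ consists of those admitting a $1$-dimensional invariant subspace with irreducible quotient. A non-split extension of a $1$-dimensional representation by an irreducible $2$-dimensional one lies in the first set but not the second, whereas a direct sum lies in both: within a $\sim_W$-class the strata are neither equal nor disjoint, so the claimed stratification $\mathcal{R}_G(\Gamma) = \bigsqcup_{[I]}\widehat{\mathcal{R}}_{P_I}^\ast(\Gamma)$ does not hold (indeed the paper explicitly warns that the cover (\ref{eq:decomposition-representation}) is not disjoint).

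The gap is repairable, and the repair is what the paper does: the redundancies disappear only \emph{after} passing to the quotient. Since every orbit closure in $\widehat{\mathcal{R}}_{P_I}^\ast$ contains a unique closed orbit, which meets $G\cdot\mathcal{R}_{L_I}^\ast$ by Corollary \ref{cor:intersection-core} and Proposition \ref{prop:polystability}, the image of $\widehat{\mathcal{R}}_{P_I}^\ast$ in $\mathcal{X}_G(\Gamma)$ equals the image of $\mathcal{R}_{L_I}^\ast$; for $I \sim_W I'$ with $\sigma\cdot I = I'$ one has $\mathcal{R}_{L_{I'}}^\ast = \sigma\cdot\mathcal{R}_{L_I}^\ast$, so these images \emph{coincide} downstairs even though the strata upstairs differ, and your (correct) argument via Proposition \ref{prop:conjugate-reps} and Lemma \ref{lem:conjugate-levi-weil} shows that images attached to inequivalent classes are disjoint. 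One then applies cut-and-paste to the resulting decomposition of $\mathcal{X}_G(\Gamma)$ into pairwise disjoint constructible pieces, one per class in $2^{\Delta}/\sim_W$, each a pseudo-quotient of the corresponding $\widehat{\mathcal{R}}_{P_I}^\ast$ and hence of virtual class $\left[\mathcal{R}_{L_I}^\ast \sslash N_I\right]$ by Corollary \ref{cor:virtual-core}. As written, though, your proof derives the final identity from a stratification of the representation variety that does not exist.
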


\begin{proof}
    We start with the decomposition (\ref{eq:decomposition-representation}) of the representation variety $\mathcal{R}(\Gamma, G)$ into its parabolic parts as
$$
    \mathcal{R}_G(\Gamma) = \bigcup_{I \subseteq \Delta} \widehat{\mathcal{R}}_{P_I}^\ast(\Gamma).
$$
Now, when we take the quotient by the action of $G$ by conjugation, Theorem \ref{thm:core-key} shows that $[\widehat{\mathcal{R}}_{P_I}^\ast(\Gamma) \sslash G] = [\mathcal{R}_{L_I}^\ast(\Gamma) \sslash N_I]$. 

However, these quotients are not disjoint, since we may have $\rho \in \mathcal{R}_{L_I}^\ast(\Gamma)$ conjugate to $\rho' \in \mathcal{R}_{L_I'}^\ast(\Gamma)$ for $I \neq I'$. But, by Proposition \ref{prop:conjugate-reps}, we have that in this case $L_I$ and $L_{I'}$ are also conjugate. Furthermore, by Lemma \ref{lem:conjugate-levi-weil}, $L_I$ and $L_{I'}$ must by conjugate by an element of the Weil group $W$, and thus $I \sim_W I'$. Hence, by removing these redundancies picking a unique element on each equivalence class of $2^{\Delta}/\sim_W$, we do get a decomposition of $\mathcal{R}_G(\Gamma) \sslash G$ into disjoint pieces of the form $\mathcal{R}_{L_I}^\ast(\Gamma) \sslash N_I$, as required.
\end{proof}

The previous result can be slightly improved in the following setting. Notice that, for any $I \subseteq \Delta$ that defines a standard torus $T_I$, by definition we have a short exact sequence
$$
    1 \longrightarrow L_I = Z_G(T_I) \longrightarrow N_I = N_G(T_I) \longrightarrow W_I = N_I / L_I \longrightarrow 1.
$$
In general, this sequence does not split, but in many cases it does (c.f.\ \cite[Theorem 4.16]{adams2017lifting}), leading to a description of $N_I$ as semidirect product $N_I = L_I \rtimes W_I$.

In this situation, we can rewrite the quotient of Corollary \ref{cor:core-subgroup} as
$$
\mathcal{R}_{L_I}^\ast(\Gamma) \sslash N_I = \mathcal{R}_{L_I}^\ast(\Gamma) \sslash (L_I \rtimes W_I) =  \left(\mathcal{R}_{L_I}^\ast(\Gamma) \sslash L_I\right) \sslash W_I = \mathcal{X}_{L_I}^\ast(\Gamma) \sslash W_I,
$$
where $\mathcal{X}_{L_I}^\ast(\Gamma) \subseteq \mathcal{X}_{L_I}(\Gamma)$ is the open set of irreducible representations onto $L_I$. Notice that the later quotient is by $W_I$, which is a quotient by a finite group. Therefore, in this setting, we can improve Corollary \ref{cor:virtual-core} to get the following result.

\begin{cor}\label{cor:main-result-good-situation}
Take $I \subseteq \Delta$ and suppose that $N_I = L_I \rtimes W_I$, where $W_I = N_I/L_I$ is the Weyl group associated to $I$. Then, we have the equality of virtual classes of $\KVar$
$$
    \left[\widehat{\mathcal{R}}_{P_I}(\Gamma) \sslash G\right]= \left[\mathcal{X}_{L_I}^\ast(\Gamma) \sslash W_I\right].
$$
Moreover, provided that $N_I = L_I \rtimes W_I$ for any subset $I \subseteq \Delta$ of simple roots, we can describe the virtual class of the total character variety as
$$
    \left[\mathcal{X}_G(\Gamma)\right] = \sum_{I \in 2^{\Delta}/\sim_W} \left[\mathcal{X}_{L_I}^\ast(\Gamma) \sslash W_I\right].
$$
\end{cor}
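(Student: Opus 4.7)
My plan is to leverage the results already proved, namely Corollary \ref{cor:virtual-core} for the local statement and Theorem \ref{thm:main-result} for the global one, and then to unfold the iterated quotient coming from the semidirect product $N_I = L_I \rtimes W_I$.

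For the local identity, I would start from Corollary \ref{cor:virtual-core}, which gives
\[
\left[\widehat{\mathcal{R}}_{P_I}^\ast(\Gamma) \sslash G\right] = \left[\mathcal{R}_{L_I}^\ast(\Gamma) \sslash N_I\right].
\]
Under the hypothesis $N_I = L_I \rtimes W_I$, the $N_I$-action on $\mathcal{R}_{L_I}^\ast(\Gamma)$ decomposes as the conjugation action of the normal reductive factor $L_I$ followed by the residual action of the finite group $W_I$ on the resulting orbit space. Since $\mathcal{R}_{L_I}^\ast(\Gamma)$ consists by construction of $L_I$-irreducible representations, the good GIT quotient $\mathcal{R}_{L_I}^\ast(\Gamma) \sslash L_I$ equals the open stratum $\mathcal{X}_{L_I}^\ast(\Gamma)$ of the $L_I$-character variety, so I obtain $\left[\mathcal{R}_{L_I}^\ast(\Gamma) \sslash N_I\right] = \left[\mathcal{X}_{L_I}^\ast(\Gamma) \sslash W_I\right]$, the outer quotient being simply the regular quotient by a finite group.

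The step that actually requires justification is that the iterated quotient $(\mathcal{R}_{L_I}^\ast(\Gamma) \sslash L_I) \sslash W_I$ represents the same class in $\KVar$ as $\mathcal{R}_{L_I}^\ast(\Gamma) \sslash N_I$. This is where the pseudo-quotient machinery of Section \ref{ssec:pseudo_quotients} enters: checking that the composition of the two quotient maps satisfies properties (1)--(4) of a pseudo-quotient for the full $N_I$-action reduces to the analogous properties for each factor, both of which are bona fide good quotients (the first in the usual reductive GIT sense, the second a geometric quotient by a finite group). Uniqueness of pseudo-quotients in $\KVar$ then forces the equality of virtual classes.

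For the global formula I would simply substitute the local identity term by term into the decomposition provided by Theorem \ref{thm:main-result}: since the semidirect product hypothesis is now assumed for every $I \subseteq \Delta$, each summand $\left[\mathcal{R}_{L_I}^\ast(\Gamma) \sslash N_I\right]$ is replaced by $\left[\mathcal{X}_{L_I}^\ast(\Gamma) \sslash W_I\right]$, and the claimed formula follows. I do not expect any genuine obstacle here; the only delicate point is the iteration-of-quotients argument in the pseudo-quotient framework, which is however a straightforward verification rather than a technical hurdle.
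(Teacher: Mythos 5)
Your proposal follows essentially the same route as the paper: apply Corollary \ref{cor:virtual-core}, rewrite $\mathcal{R}_{L_I}^\ast(\Gamma) \sslash N_I$ as the iterated quotient $\bigl(\mathcal{R}_{L_I}^\ast(\Gamma) \sslash L_I\bigr) \sslash W_I = \mathcal{X}_{L_I}^\ast(\Gamma) \sslash W_I$ using the splitting $N_I = L_I \rtimes W_I$, and substitute term by term into Theorem \ref{thm:main-result}. The paper treats the iteration-of-quotients step as an immediate identity of invariant rings, whereas you route it through the pseudo-quotient uniqueness in $\KVar$; both are valid and the argument is correct.
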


\section{Parabolic stratification for classical groups}
\label{sec:parabolic-stratification-classical-groups}

In this section, we will discuss how the stratification developed in Section \ref{sec:parabolic-stratification} instances for several classical groups. First we study the most common and general case $\GL_n$, then particularize the results for the Langland dual pair $\SL_n$ and $\PGL_n$ in type $A$. After that, we study the representatives of types $C$ and $B$, $\Sp_{2n}$ and $\SO_{2n+1}$, which are Langlands dual. Finally we cover the type $D$ case $\SO_{2n}$, which is Langlands self-dual. 

\subsection{Stratification for $G = \GL_n$}
\label{ssec:stratification_GLn}

In this case, the stratification of Theorem \ref{thm:main-result} captures the stratification of the representation variety by partition type, as constructed in \cite{FNZ2023generating} and outlined in the introduction. Recall that we are using the standard representation of $\GL_{n}$ in $\mathbb{C}^{n}$ and that a representation $\rho \in \mathcal{R}_{\GL_n}(\Gamma)$ is irreducible if there exists no non-trivial invariant subspace of $\CC^n$ or, equivalently, there is no proper parabolic subgroup where the image of $\rho$ is contained. Recall the notation of direct sum $\rho_{1}\oplus\rho_{2}\in\mathcal{R}_{\GL_{n_1+n_2}}(\Gamma)$ of representations $\rho_{1}:\Gamma\to \GL_{n_{1}}$ and $\rho_{2}:\Gamma\to \GL_{n_{2}}$.  

As explained in Section \ref{sec:lie-gl}, the simple positive roots of $\GL_n$ are $\Delta = \{\alpha_{12}, \ldots, \alpha_{(n-1)n}\}$, which we will simply denote by $\Delta = \{1, \ldots, n-1\}$, with $j$ corresponding to the root $\alpha_{j(j+1)}$. We choose as Borel subgroup $B$ the subgroup of upper triangular invertible matrices, which fixes a basis $e_1, e_2,\ldots, e_n$ of $\CC^n$. Then the standard parabolic subgroup $P_I$ associated to a subset $I \subseteq \Delta$ can be understood as follows. Suppose that the complement of $I$ is $\Delta\setminus I=\{i_1,i_2,\ldots,i_s\}$ with $i_1 < i_2 < \cdots < i_s$. Let us define the standard flag associated to $I$ by
\begin{equation}\label{eq:flag-parabolic}
    0 \subsetneq V_{1} \subsetneq V_{2} \subsetneq \cdots \subsetneq V_{s} \subsetneq \CC^n,
\end{equation}
where $V_j=\langle e_1,\ldots, e_{i_j} \rangle$. Note that the empty set $I=\emptyset$ corresponds to $\Delta\setminus I=\Delta=\{1,2,\ldots,n-1\}$ and to the full flag $0\subsetneq V_1\subsetneq V_2\subsetneq \cdots \subsetneq V_{n-1}\subsetneq \CC^n$, and that the whole $I=\Delta$ corresponds to $\Delta\setminus I=\emptyset$ and to the trivial flag $0\subsetneq \CC^n$. 

Then, the standard parabolic subgroup $P_I$ associated to $I \subseteq \Delta$ is the stabilizer of the flag (\ref{eq:flag-parabolic}), i.e.\ $P_I$ is the collection of $A \in \GL_n$ such that $A(V_j) \subseteq V_j$ for all $j = 1, \ldots, s$. In particular, the Borel subgroup $B$ of upper triangular matrices is the stabilizer of the full standard flag corresponding to $\emptyset\subseteq \Delta$, the whole $\GL_n$ is the stabilizer of the trivial flag $0\subsetneq \CC^n$ corresponding to the whole $\Delta$ and maximal standard parabolic subgroups coming from $I=\Delta\setminus\{i_1\}$ (removing one node $i_i$ in $\Delta$) correspond to one-step flags $0\subsetneq V_{1}\subsetneq \CC^n$. 

\begin{rem}
If we do  not fix a Borel $B\subseteq G$, subsets $I\subseteq \Delta$ correspond to conjugacy classes of parabolic subgroups $\mathcal{P}_I$. Each parabolic subgroup in the conjugacy class $\mathcal{P}$ stabilizes a flag of the same numerical invariants, i.e. stabilizes a flag of the form $0 \subsetneq V_{1} \subsetneq V_{2} \subsetneq \cdots \subsetneq V_{s} \subsetneq \CC^n$ where the dimensions of the terms are $\dim V_{j}=i_j$ and $I=\Delta\setminus \{i_1,\ldots, i_s\}$. 
\end{rem}

Furthermore, once fixed the flag (\ref{eq:flag-parabolic}) corresponding to a subset $I \subseteq \Delta$, the Levi subgroup $L_I$ associated to the standard parabolic subgroup $P_I$ can be obtained as follows. Consider the quotients of the flag (\ref{eq:flag-parabolic}) given by
$$
    W_j =  \langle e_{i_{j-1}+1}, \ldots, e_{i_j}\rangle=V_j / V_{j-1}
$$
for $j = 1, \ldots, s + 1$, where we set $V_0 = 0$ and $V_{s+1}=\CC^n$. Then, we have a splitting
$$
   \CC^n = \bigoplus_{j = 1}^{s+1} W_j, \qquad  V_k = \bigoplus_{j \leq k} W_j.
$$
The Levi subgroup $L_I$ is then the collection of $A\in \GL_n$ that preserve this splitting, i.e.\ such that $A(W_j) \subseteq W_j$ for all $1 \leq j \leq s+1$. In particular, setting $i_0=0$ and $i_{s+1} = n$, we have $\dim W_j = i_j - i_{j-1} =: \lambda_j$ and thus
\begin{equation}\label{eq:levi-gln}
    L_I \cong \prod_{j = 1}^{s+1} \GL_{\lambda_j}.
\end{equation}

\begin{example}\label{ex:GL_n}
Let us see this in the concrete case of $G=\GL_5$. The roots in this case are \linebreak $\Phi=\{\alpha_{ij}:1\leq i,j\leq 5, i\neq j\}$, the positive roots are $\Phi^+=\{\alpha_{ij}:1\leq i< j\leq 5\}$ and the simple roots are $\Delta = \{\alpha_{12} = 1, \alpha_{23} = 2, \alpha_{34} = 3, \alpha_{45} = 4\}$. Let us consider the subset $I=\{\alpha_{23} = 2, \alpha_{45} = 4\}\subseteq \Delta$. The corresponding kernels are
\[\ker \alpha_{23}=\left\{{\rm diag}(a_1, b, b, a_4, a_5)\right\}\; , \quad \ker \alpha_{45}=\left\{{\rm diag}(a_1, a_2, a_3, c, c)\right\}.\]
therefore 
\[T_I= \bigcap_{\alpha\in I} \ker \alpha=\left\{{\rm diag}(a, b, b, c, c):a,b,c\in \GG_m\right\}.\]

Since $\Delta \setminus I = \{1,3\}$, the parabolic subgroup $P_I$ is the stabilizer of the flag
$$
    0 \subsetneq V_1 = \langle e_1\rangle \subsetneq V_2 = \langle e_1, e_2, e_3 \rangle \subsetneq \CC^5,
$$
which corresponds exactly to matrices of the form
$$
    \left(\begin{array}{c|cc|cc}
        \ast & \ast & \ast & \ast & \ast \\\cline{1-1}
         & \ast & \ast & \ast & \ast \\
         & \ast & \ast & \ast & \ast \\\cline{2-3}
        \multicolumn{1}{r}{}  &  &  & \ast & \ast \\
        \multicolumn{1}{r}{}  &  &  & \ast & \ast \\\cline{4-5}
    \end{array}\right).
$$

The Levi subgroup is $L_I=Z_G(T_I)$. It corresponds to those matrices stabilizing the splitting
$$
    W_1 = \langle e_1\rangle, \quad W_2 = \langle e_2, e_3\rangle, \quad W_3 = \langle e_4, e_5\rangle.
$$
We have thus that $L_I \cong \GL_1 \times \GL_2 \times \GL_2$. Explicitly, it is given by block-diagonal matrices of the form
$$
    \left(\begin{array}{c|cc|cc}
        \ast & \multicolumn{4}{r}{} \\\cline{1-3}
         & \ast & \ast & \multicolumn{2}{r}{} \\
         & \ast & \ast & \multicolumn{2}{r}{} \\\cline{2-5}
        \multicolumn{1}{r}{}  &  &  & \ast & \ast \\
        \multicolumn{1}{r}{}  &  &  & \ast & \ast \\\cline{4-5}
    \end{array}\right).
$$
The roots $\Phi_I$ of $L_I$ are $\alpha_{23},\alpha_{32},\alpha_{45},\alpha_{54}$, and the positive ones come from $\Phi^{+}\cap \Phi_I = \{\alpha_{23}, \alpha_{45}\}$.
The cocharacter $\lambda_I$ associated to the subset $I$ is computed such that $(\alpha_{23}, \lambda_I)=(\alpha_{45}, \lambda_I)=0$. Therefore, it is given by 
\[\lambda_I:\GG_m \rightarrow T\;, \quad t\mapsto {\rm diag}(t^{\lambda},t^{\mu}, t^{\mu}, t^{\nu}, t^{\nu})\;, \lambda, \mu, \nu\in\mathbb{Z}.\]
Finally, the normalizer $N_{\GL_5}(T_I) = L_I \rtimes S_2$, where the symmetric group $S_2$ acts on $L_I$ by permuting the two blocks of rank two. Hence, the Weyl group is $W_I = N_{\GL_5}(T_I) / Z_{\GL_5}(T_I) = (L_I \rtimes S_2) / L_I = S_2$.

\end{example}

Notice that the subsets of $\Delta$ can be identified with tuples $\bm{\lambda} = (\lambda_1, \ldots, \lambda_r)$ of positive integers with $\sum_i \lambda_i = n$, i.e.\ ordered partitions of $n$. Explicitly, given $I \subseteq \Delta$, let $\Delta \setminus I = \{i_1, \ldots, i_s\}$ with \linebreak $0< i_1 < i_2 \ldots < i_s<n$ and set $i_0 = 0$ and $i_{s+1} = n$. Then the associated ordered partition is $\bm{\lambda} = (\lambda_1, \ldots, \lambda_{s+1})$ with $\lambda_j = i_j - i_{j-1}$ for $1 \leq j \leq s+1$, and the process is reversible. We shall denote the subset of $\Delta$ associated to an ordered partition $\bm{\lambda}$ by $I_{\bm{\lambda}} \subseteq \Delta$.

Since the Weil group of $\GL_n$ is the symmetric group $W = S_n$, we have that two subsets $I = \{\alpha_{\ell_1(\ell_1+1)}, \ldots, \alpha_{\ell_s(\ell_s+1)}\}$ and $I' = \{\alpha_{\ell_1'(\ell_1'+1)}, \ldots, \alpha_{\ell_s'(\ell_s'+1)}\}$ of $\Delta$ are equivalent under the action of $W$ if and only if the cycle structure, as a product of transpositions, of the permutation $(\ell_1\, \ell_1+1)\cdots (\ell_s\, \ell_s+1) \in S_n$ is the same as the one of $(\ell_1'\, \ell_1'+1)\cdots (\ell_s'\, \ell_s'+1) \in S_n$ or, equivalently, if they are conjugated permutations of $S_n$. A convenient way of capturing these equivalence classes is through unordered partitions (as in \cite{FNZ2023generating}) $[k]=[1^{k_{1}}\cdots j^{k_{j}}\cdots n^{k_{n}}]$ where
$k_{j}$ means that $[k]$ has $k_{j}\geq 0$ parts of
size $j\in\{1,\ldots,n\}$, $n=\sum_{j=1}^{n}j\cdot k_{j}$ and whose \emph{length} is the sum of the exponents $|[k]|:=\sum k_{j}$. We denote by $\mathcal{P}_{n}$ the set of partitions
of $n\in\mathbb{N}$.

We this notion at hand, we have a natural bijection between unordered partitions of $n$ and equivalence classes of $ 2^\Delta / \sim_W$. Indeed, given $[k] \in \mathcal{P}_n$, it defines a canonical ordered partition just by sorting the entries of $[k]$ increasingly, that we will also denote by $[k]$. This gives rise to a well-defined subset $I_{[k]} \subseteq \Delta$ giving a canonical representative of its $\sim_W$-equivalence class.

\begin{example}
    Consider $G = \GL_5$, so that the simple roots are $\Delta = \{1, 2,3,4\}$. Let us take the unordered partition $[k] = [1^12^2] \in \mathcal{P}_5$, with gives rise to the ordered partition $(1,2,2)$ of $5$. In this manner, $\Delta \setminus I_{[k]} = \{1,3\}$ and therefore $I_{[k]} = \{2,4\}$. The standard parabolic subgroup $P_{[k]}$ associated to the partition $[k]$ is thus exactly the one studied in Example \ref{ex:GL_n}. 
\end{example}

In the same vein, following (\ref{eq:levi-gln}) the associated Levi subgroup $L_{[k]} = L_{I_{[k]}}$ is isomorphic to
$$
    L_{[k]} = \prod_{j=1}^n \GL_{j}^{k_j}.
$$
Furthermore, if $S_{[k]} = S_{k_1} \times S_{k_2} \times \cdots \times S_{k_n}$ denotes the subgroup of $S_n$ permuting blocks of the same size, then we have that $N_{[k]} = N_{I_{[k]}} = L_{[k]} \rtimes S_{[k]}$, where $S_{[k]}$ acts on $L_{[k]}$ by permutation of blocks.

Hence, $\rho \in \mathcal{R}_{L_{[k]}}^\star(\Gamma)$ if and only if $\rho$ has the form
\begin{equation*}
    \rho = \bigoplus_{j=1}^{n}\bigoplus_{\ell = 1}^{k_j}\rho_{j,\ell},
\end{equation*}
where each $\rho_{j,\ell} \in \mathcal{R}^{\star}_{\GL_{j}}(\Gamma)$ is irreducible, that is, $\rho$ is a direct sum of $k_{j}$ irreducible representations of rank $j$, for $j=1,\ldots,n$. By convention, if some $k_{j}=0$, then $\rho_{j, \ell}$ is not present in the direct sum. In other words, we have that
$$
\mathcal{R}^{\star}_{L_{[k]}}(\Gamma) = \prod_{j=1}^n \mathcal{R}^{\star}_{\GL_j}(\Gamma)^{k_j}, \qquad \mathcal{X}^{\star}_{L_{[k]}}(\Gamma) = \prod_{j=1}^n \mathcal{X}^{\star}_{\GL_j}(\Gamma)^{k_j}.
$$

Therefore, we are in the situation of Corollary \ref{cor:main-result-good-situation} and thus we get that
\begin{align*}
    \left[\mathcal{R}_{L_{[k]}}^\star(\Gamma)\right] & =  \left[\mathcal{X}^{\star}_{L_{[k]}}(\Gamma) \sslash S_{[k]}\right] = \left[\left(\prod_{j=1}^n \mathcal{X}^{\star}_{\GL_j}(\Gamma)^{k_j}\right) \sslash S_{[k]}\right],
\end{align*}
Notice that these character varieties are actually geometric quotients, since the action of $\GL_j$ on $\mathcal{R}^{\star}_{\GL_j}(\Gamma)$ is free by the Schur's lemma (c.f.\ \cite[Lemma 1.7]{fulton2013representation}). Now, observe that we have a natural identification
$$
\left(\prod_{j=1}^n \mathcal{X}^{\star}_{\GL_j}(\Gamma)^{k_j}\right) \sslash S_{[k]}  = \prod_{j=1}^{n}\sym^{k_{j}}(\mathcal{X}^{\star}_{\GL_j}(\Gamma)),
$$
where $\sym^{k}(X) = X^k/S_k$ denotes the symmetric product of the variety $X$ (c.f.\ \cite[Proposition 4.5]{FNZ2023generating}). Hence, denoting this later space by $\mathcal{X}^{[k]}_{\GL_n}(\Gamma)$, Theorem \ref{thm:main-result} recovers the following result previously proven in \cite{FNZ2023generating}.

\begin{prop}\cite[Proposition 4.3]{FNZ2023generating}
The character variety $\mathcal{X}_{\GL_n}(\Gamma)$ can be written as a
disjoint union, labelled by partitions $[k]\in\mathcal{P}_{n}$, of
locally closed quasi-projective varieties as
\[
\mathcal{X}_{\GL_n}(\Gamma)=\bigsqcup_{[k]\in\mathcal{P}_{n}}\mathcal{X}^{[k]}_{\GL_n}(\Gamma).
\]
\end{prop}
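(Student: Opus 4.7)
The plan is to apply Corollary \ref{cor:main-result-good-situation} directly to $G = \GL_n$ and match the resulting pieces with the iterated symmetric product description. Four things need to be checked: that the hypothesis $N_I = L_I \rtimes W_I$ holds for every $I$, that the equivalence classes in $2^\Delta/\sim_W$ correspond bijectively to partitions of $n$, that the Levi quotient matches the stated symmetric product, and that each piece is locally closed and quasi-projective.

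For the hypothesis, recall from Section \ref{sec:lie-gl} and Example \ref{ex:GL_n} that if $I \subseteq \Delta$ determines the ordered partition $(\lambda_1, \ldots, \lambda_{s+1})$ of $n$, then $L_I$ is the block-diagonal subgroup of $\GL_n$ preserving the corresponding splitting of $\CC^n$, and an element of $\GL_n$ normalizes $T_I$ if and only if it permutes the equal-sized blocks on top of acting by $L_I$. Therefore $N_I = L_I \rtimes S_{[k]}$, where $[k]$ is the unordered partition associated to $I$ and $S_{[k]} = \prod_j S_{k_j}$, so the hypothesis of Corollary \ref{cor:main-result-good-situation} is satisfied, and $W_I = S_{[k]}$ is finite. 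Since the Weyl group of $\GL_n$ is the full symmetric group $S_n$, two subsets $I, I' \subseteq \Delta$ are $\sim_W$-equivalent precisely when their associated ordered partitions agree as multisets, i.e.\ when they determine the same unordered partition $[k] \in \mathcal{P}_n$. Sorting blocks by size picks a canonical representative $I_{[k]}$ and gives a bijection $2^\Delta/\sim_W \cong \mathcal{P}_n$.

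For the identification of the strata, the canonical Levi is $L_{[k]} \cong \prod_{j=1}^n \GL_j^{k_j}$, and a representation into $L_{[k]}$ is $L_{[k]}$-irreducible if and only if each of its $k_j$ rank-$j$ components is an irreducible $\GL_j$-representation; hence
$$
\mathcal{R}^\ast_{L_{[k]}}(\Gamma) = \prod_{j=1}^n \mathcal{R}^\ast_{\GL_j}(\Gamma)^{k_j}, \qquad \mathcal{X}^\ast_{L_{[k]}}(\Gamma) = \prod_{j=1}^n \mathcal{X}^\ast_{\GL_j}(\Gamma)^{k_j},
$$
the latter being a genuine geometric quotient by Schur's lemma (the $\GL_j$-stabilizers on $\mathcal{R}^\ast_{\GL_j}(\Gamma)$ are the constant scalars). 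Since $S_{[k]}$ permutes independently the $k_j$ factors of each rank, Corollary \ref{cor:main-result-good-situation} then yields
$$
\mathcal{X}^\ast_{L_{[k]}}(\Gamma) \sslash S_{[k]} \;=\; \prod_{j=1}^n \sym^{k_j}\!\bigl(\mathcal{X}^\ast_{\GL_j}(\Gamma)\bigr) \;=:\; \mathcal{X}^{[k]}_{\GL_n}(\Gamma).
$$

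Finally, to upgrade from equality of virtual classes to the stated disjoint union of locally closed subvarieties, I would invoke the geometric content underlying Theorem \ref{thm:main-result}: the pieces $\widehat{\mathcal{R}}^\ast_{P_I}(\Gamma)$ form a locally closed stratification of $\mathcal{R}_{\GL_n}(\Gamma)$, so after taking the GIT quotient the images $\mathcal{X}^{[k]}_{\GL_n}(\Gamma)$ are locally closed; quasi-projectivity is inherited since finite quotients of finite products of quasi-projective varieties are quasi-projective. The only delicate point is that distinct partitions yield disjoint images in $\mathcal{X}_{\GL_n}(\Gamma)$, and this is where I expect the main obstacle to lie in a direct approach, but here it is supplied by Proposition \ref{prop:conjugate-reps} together with Lemma \ref{lem:conjugate-levi-weil}, which force any $G$-identification between points of $\mathcal{R}^\ast_{L_{[k]}}(\Gamma)$ and $\mathcal{R}^\ast_{L_{[k']}}(\Gamma)$ to come from a Weyl-group element, and hence to imply $[k] = [k']$.
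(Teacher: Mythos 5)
Your proposal is correct and follows essentially the same route as the paper: both verify $N_{[k]} = L_{[k]} \rtimes S_{[k]}$, identify $2^\Delta/\sim_W$ with $\mathcal{P}_n$ via the Weyl group $S_n$, compute $\mathcal{X}^\ast_{L_{[k]}}(\Gamma) \sslash S_{[k]} = \prod_j \sym^{k_j}(\mathcal{X}^\ast_{\GL_j}(\Gamma))$, and then invoke the machinery of Theorem \ref{thm:main-result} (in particular Proposition \ref{prop:conjugate-reps} and Lemma \ref{lem:conjugate-levi-weil} for disjointness of the strata). Your explicit attention to upgrading the virtual-class identity to a genuine locally closed decomposition is a point the paper leaves implicit, but it rests on the same underlying geometric facts.
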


\subsection{Stratification for $G = \SL_n$ and $\PGL_n$ (Dynkin diagram $A_{n-1}$)}
These cases are very similar to the one of $\GL_n$, as expected from the fact that their simple roots form the same root system $A_{n-1}$. Langlands duality of the groups will be reflected in a symmetry in the decomposition.

As before, in both cases the collection of Weyl-equivalence classes of subsets of simple positive roots is in bijection with partitions $[k] = [1^{k_{1}}\cdots j^{k_{j}}\cdots n^{k_{n}}] \in \mathcal{P}_n$ of $n$. 
The unique subtlety is that, now, all the elements must have determinant $1$ for $\SL_n$ and are determined up to re-scaling for $\PGL_n$. In this way, the Levi subgroups $L_{[k]}^{\SL_n}$ and $L_{[k]}^{\PGL_n}$ of $\SL_n$ and $\PGL_n$ respectively are now
\begin{align*}
    L_{[k]}^{\SL_n} & = \left\{(A_{j,\ell}) \in \prod_{j=1}^n \GL_{j}^{k_j} \,\left|\, \prod_{j,\ell} \det(A_{j,\ell}) = 1\right.\right\} \subseteq \prod_{j=1}^n \GL_{j}^{k_j},\\
    L_{[k]}^{\PGL_n} & = \left\{(A_{j,\ell}) \in \prod_{j=1}^n \GL_{j}^{k_j}\right\}/\GG_m,
\end{align*}
where $\GG_m$ acts by simultaneous re-scaling of the matrices. The normalizer of the Levi subgroup has the same form $N_{[k]}^{\SL_n} = L_{[k]}^{\SL_n} \rtimes S_{[k]}$ (resp.\ $N_{[k]}^{\PGL_n} = L_{[k]}^{\PGL_n} \rtimes S_{[k]}$), where $S_{[k]}$ acts on the Levi subgroup by permutation of blocks.

\begin{rem}
    These Levi subgroups agree with the fact that the group $L_{[k]}^{\SL_n}$ (resp.\ $L_{[k]}^{\PGL_n}$) is the unique reductive group with root datum $(\ZZ^n/(1,\ldots,1), \Phi_{I_{[k]}}, (\ZZ^n/(1,\ldots,1))^\vee), \Phi_{I_{[k]}}^\vee)$ (resp.\ with root datum $((\ZZ^n/(1,\ldots,1))^\vee, \Phi_{I_{[k]}}^\vee, \ZZ^n/(1,\ldots,1), \Phi_{I_{[k]}})$), as claimed by Remark \ref{rem:root-datum-levi}.
\end{rem}

In this way, a representation $\rho \in \mathcal{R}_{L_{[k]}^{\SL_n}}^\ast(\Gamma) $ (resp.\ $\rho \in \mathcal{R}_{L_{[k]}^{\PGL_n}}^\ast(\Gamma) $) if and only if $\rho$ has the form
\begin{equation*}
    \rho = \bigoplus_{j=1}^{n}\bigoplus_{\ell = 1}^{k_j}\rho_{j,\ell}, \qquad \left(\textrm{resp.\ } \rho = \left(\bigoplus_{j=1}^{n}\bigoplus_{\ell = 1}^{k_j}\rho_{j,\ell}\right)/\GG_m \right)
\end{equation*}
where each $\rho_{j,\ell} \in \mathcal{R}^{\star}_{\GL_{j}}(\Gamma)$ is irreducible. In the case $G = \SL_n$, we must have $\prod_{j,\ell}\det(\rho_{j, \ell}) = 1$, and in the case $G = \PGL_n$ we have a re-scaling action of $\GG_m$. Therefore, we have
\begin{align*}
    \mathcal{X}_{L_{[k]}^{\SL_n}}^\star(\Gamma) & \sslash S_{[k]} = \left\{(\rho_{j,\ell}) \in \prod_{j=1}^n \mathcal{X}^{\star}_{\GL_j}(\Gamma)^{k_j} \,\left|\, \prod_{j,\ell} \det(\rho_{j,\ell}) = 1\right.\right\} \sslash S_{[k]},\\
    \mathcal{X}_{L_{[k]}^{\PGL_n}}^\star(\Gamma) & \sslash S_{[k]} = \left(\left(\prod_{j=1}^n \mathcal{X}^{\star}_{\GL_j}(\Gamma)^{k_j}\right) / \GG_m \right) \sslash S_{[k]}.
\end{align*}
Notice that the former is naturally the subset of representations of $\prod_{j=1}^{n}\sym^{k_{j}}(\mathcal{X}^{\star}_{\GL_j}(\Gamma))$ with determinant $1$. Denoting these subsets by $\mathcal{X}^{[k]}_{\SL_n}(\Gamma)$ and $\mathcal{X}^{[k]}_{\PGL_n}(\Gamma)$ respectively, we get the analogous decompositions
$$
\mathcal{X}_{\SL_n}(\Gamma)=\bigsqcup_{[k]\in\mathcal{P}_{n}}\mathcal{X}^{[k]}_{\SL_n}(\Gamma), \qquad \mathcal{X}_{\PGL_n}(\Gamma)=\bigsqcup_{[k]\in\mathcal{P}_{n}}\mathcal{X}^{[k]}_{\PGL_n}(\Gamma).
$$

\begin{rem}
    The previous decomposition shows that the expected equality of $E$-polynomials predicted by the Langlands duality 
    $$
    e\left(\mathcal{X}_{\SL_n}(\Gamma)\right) = e\left(\mathcal{X}_{\PGL_n}(\Gamma)\right)
    $$
    holds provided that it also holds the equality stratum by stratum
    $$
    e\left(\mathcal{X}^{[k]}_{\SL_n}(\Gamma)\right) = e\left(\mathcal{X}^{[k]}_{\PGL_n}(\Gamma)\right)
    $$
    for all $[k] \in \mathcal{P}_n$. This equality was proven in \cite{florentino2021serre} by using the stratification and equalities strata-by-strata, for the $\SL_n$ and $\PGL_n$-character varieties of the free group. 
\end{rem}

\subsection{Stratification for $G = \Sp_{2n}$ (Dynkin diagram $C_n$)}

In this case, with the notation of Section \ref{sec:lie-sp}, the simple roots are $\Delta=\{\beta_{12,+ -}, \beta_{23,+-}, \beta_{34,+-},\ldots, \beta_{(n-1) n, +-}, \alpha_{n, +}\}$, the last one being the unique long root in the Dynkin diagram. We index them as $\Delta = \{1,\ldots, n\}$ with $j$ corresponding to $\beta_{j(j+1),+-}$ for $1 \leq j \leq n-1$ and $n$ corresponding to $\alpha_{n,+}$.

If $\omega$ is the symplectic for in $\CC^{2n}$, given a subspace $W\subseteq \CC^{2n}$, define its $\omega$-orthogonal as \linebreak $W^{\perp}=\{v\in \CC^{2n} \,\mid \omega(w,v)=0, \; \forall w\in W\}$. A subspace $W\subseteq \CC^{2n}$ is called isotropic if $W\subseteq W^{\perp}$, i.e. if $\omega|_{W\times W}=0$. An isotropic subspace has dimension at most $n$ and is contained in a maximal isotropic subspace. These maximal isotropic subspaces $W$, known as lagrangian subspaces, are exactly the $n$-dimensional isotropic subspaces and satisfy $W^\perp = W$.

Given a subset $I \subseteq \Delta$, let $\Delta \setminus I = \{i_1, \ldots, i_s\}$ with $i_1 < i_2 < \cdots < i_s$. We choose a basis of $\CC^n$ of the form $\langle x_1, \ldots, x_n, y_1, \ldots, y_n\rangle$ for which the symplectic form is the standard one. In this situation, the standard parabolic subgroup $P_I$ associated to $I$ is the subgroup of symplectic matrices stabilizing the flag
$$
    0 \subsetneq V_{1}  \subsetneq \cdots \subsetneq V_{j} \subsetneq \cdots \subsetneq V_{s} \subsetneq \CC^{2n}.
$$
where $V_{j} = \langle x_{1}, \ldots, x_{i_j}\rangle$ for $1 \leq j \leq s$ are isotropic subspaces. Notice that, since a symplectic map preserving an isotropic space $V$ then it also preserves its orthogonal $V^\perp$, we get that $P_I$ is also the stabilizer of the flag
\begin{equation}\label{eq:flag-simplectic}
    0 \subsetneq V_{1} \subsetneq \cdots \subsetneq V_{j} \subsetneq \cdots \subsetneq V_{s} \subsetneq V_{s}^\perp \subsetneq V_{{s-1}}^\perp \subsetneq \cdots \subsetneq V_{1}^\perp \subsetneq \CC^{2n}.
\end{equation}
where, explicitly, $V_j^\perp = \langle x_1, \ldots, x_n, y_{i_j+1}, \ldots, y_n\rangle$.

\begin{rem}
    The empty set $I=\emptyset$ corresponds to $\Delta\setminus I=\Delta=\{1,2,\ldots,n\}$ and to the maximal/full isotropic flag $0\subsetneq V_1\subsetneq V_2\subsetneq \cdots \subsetneq V_{n-1}\subsetneq V_{n}\subsetneq \CC^{2n}$, and the whole $I=\Delta$ corresponds to $\Delta\setminus I=\emptyset$ and to the trivial flag $0\subsetneq \CC^{2n}$. There is one special choice of node, the long root $n$, yielding the Siegel parabolics associated to $\Delta\setminus \{n\}$ which corresponds to the minimal flag $0\subsetneq V_1=\langle x_1,\ldots, x_n\rangle \subsetneq \CC^{2n}$, where note that $V_1$ is lagrangian (i.e.\ maximal isotropic).
Each other choice of node $1\leq i_1\leq n-1$ yields a maximal standard parabolic  associated to $\Delta\setminus \{i_1\}$ corresponding to the minimal flag $0\subsetneq V_{1}= \langle x_1,\ldots, x_{i_1} \rangle \subsetneq \CC^{2n}$, where note that $V_{1}$ is isotropic but not lagrangian. 
\end{rem}

Similarly, for the Levi subgroup $L_I$ associated to $I$, we have that $L_I$ is the collection of symplectic maps preserving the graded pieces of the flag (\ref{eq:flag-simplectic}). To describe it explicitly, we have two situations.
\begin{itemize}
    \item If $n \notin I$, then $\Delta \setminus I = \{i_1, \ldots, i_s = n\}$ with $i_1 < \ldots < i_s = n$. This means that the flag (\ref{eq:flag-simplectic}) contains the lagrangian subspace $V_{i_s} = \langle x_1, \ldots, x_n\rangle=V_{i_s}^{\perp}$. Now, if we set $i_0 = 0$ and $V_{0}=0$ we can consider
    $$
        W_j = \langle x_{i_{j-1}+1}, \ldots, x_{i_j}\rangle=V_j / V_{j-1}, \qquad W_j' = \langle y_{i_{j-1}+1}, \ldots, y_{i_j}\rangle=V_{j-1}^{\perp}/V_{j}^{\perp}, \qquad j=1,\ldots, s,
    $$
    which yield a splitting of the flag (\ref{eq:flag-simplectic}) in the sense that
    $$
        \CC^{2n}= \bigoplus_{j \leq s} W_j 
        \oplus \bigoplus_{j \leq s} W'_j,\qquad
        V_k=\bigoplus_{j \leq k} W_j, \qquad V_{i_k}^\perp = \bigoplus_{j \leq s} W_j \oplus \bigoplus_{j \leq k} W_j'=\langle x_1, \ldots, x_n\rangle\oplus \bigoplus_{j \leq k} W_j'.
    $$
    Then, $L_I$ is the subspace of symplectic maps preserving $W_j$ and $W_j'$ for all $1 \leq j \leq s$. However, since both $W_j$ and $W_j'$ are isotropic spaces, the symplectic form vanishes on them, so the only constraint is that if $A$ acts on $W_j$ as a linear endomorphism $A_j \in \GL_{\lambda_j}$, where $\lambda_j = i_{j} -i_{j-1}$, then $A$ acts on $W_j'$ as $(A_j^t)^{-1}$. Hence, we get
    \begin{equation}\label{eq:levi-sp-not-n}
        L_I \cong \prod_{j=1}^s \GL_{\lambda_j},
    \end{equation}
    where the isomorphism is given by $\prod_{j=1}^s \GL_{\lambda_j} \ni (A_1, \ldots, A_s) \mapsto (A_1, \ldots, A_s, (A_s^t)^{-1}, \ldots, (A_1^t)^{-1}) \in L_I$.
    \item If $n \in I$, then $\Delta \setminus I = \{i_1, \ldots, i_s\}$ with $i_s < n$. This means that the flag (\ref{eq:flag-simplectic}) does not contain the lagrangian subspace $\langle x_1, \ldots, x_n\rangle$. The splitting of (\ref{eq:flag-simplectic}) is now given again by
    $$
        W_j = \langle x_{i_{j-1}+1}, \ldots, x_{i_j}\rangle, \qquad W_j' = \langle y_{i_{j-1}+1}, \ldots, y_{i_j}\rangle. 
    $$
    for $1 \leq j \leq s$, but we must also add the new subspace
    $$
        W^\star = \langle x_{i_s+1}, \ldots, x_n, y_{i_s+1}, \ldots, y_n\rangle
    $$
    such that 
    \[\CC^{2n}= \bigoplus_{j \leq s} W_j 
        \oplus \bigoplus_{j \leq s} W'_j\oplus W^{\ast},\]
    Then, $L_I$ is the subspace of symplectic maps preserving $W_j$ and $W_j'$ for all $1 \leq j \leq s$, as well as $W^\star$. Again, since $W_j$ and $W_j'$ are isotropic spaces, they impose no extra conditions. But in $W^\star$ the situation is different since there the restriction is a genuine symplectic form. Hence, we have that in this case
    \begin{equation}\label{eq:levi-sp-n}
        L_I \cong \prod_{j=1}^s \GL_{\lambda_j} \times \Sp_{2(n-i_s)},
    \end{equation}
    where $\lambda_j = i_{j} -i_{j-1}$.
\end{itemize}

\begin{example}
Consider the case $n = 5$, so $\Delta = \{1, \ldots, 5\}$. Take $I = \{1,4\}$ so that $\Delta \setminus I = \{i_1 = 2, i_2 = 3, i_3 = 5\}$. Then, the flag (\ref{eq:flag-simplectic}) corresponds to
\begin{align*}
    0 \subsetneq V_{1} = \langle x_1, x_2\rangle & \subsetneq V_{2} = \langle x_1, x_2, x_3\rangle \subsetneq V_{3} = \langle x_1, \ldots, x_5\rangle = V_{3}^\perp \\
    &\subsetneq V_{2}^\perp = \langle x_1, \ldots, x_5, y_4, y_5\rangle \subsetneq V_{1}^\perp = \langle x_1, \ldots, x_5, y_3, y_4, y_5\rangle \subsetneq \CC^{10}.
\end{align*}
Then, the associated splitting is
\begin{align*}
    W_{1} & = \langle x_1, x_2\rangle, \quad W_{2} = \langle x_3\rangle, \quad W_{3} = \langle x_4, x_5\rangle \\
    W_{1}' & = \langle y_1, y_2\rangle, \quad W_{2}' = \langle y_3\rangle, \quad W_{3}' = \langle y_4, y_5\rangle.
\end{align*}
Therefore, taking into account that the action on the $W_1$, $W_2$ and $W_3$ determines the one on $W_1'$, $W_2'$ and $W_3'$, we have
$$
    L_I = \GL_2 \times \GL_1 \times \GL_2.
$$
\end{example}

\begin{example}
Now, consider again $n=5$ but the subset $I = \{1,4,5\}$ instead, so that $\Delta \setminus I = \{i_1 = 2, i_2 = 3\}$. Then, the flag (\ref{eq:flag-simplectic}) corresponds to
\begin{align*}
    0 \subsetneq V_{1} = \langle x_1, x_2\rangle \subsetneq V_{2} = \langle x_1, x_2, x_3\rangle \subsetneq V_{2}^\perp = \langle x_1, \ldots, x_5, y_4, y_5\rangle \subsetneq V_{1}^\perp = \langle x_1, \ldots, x_5, y_3, y_4, y_5\rangle \subsetneq \CC^{10}.
\end{align*}
Then, the associated splitting is
\begin{align*}
    W_{1} = \langle x_1, x_2\rangle, \quad W_{2} = \langle x_3\rangle, \quad W_{1}' = \langle y_1, y_2\rangle, \quad W_{2}' = \langle y_3\rangle,\quad W^\star = \langle x_4, x_5, y_4, y_5\rangle.
\end{align*}
Therefore, we have
$$
    L_I = \GL_2 \times \GL_1 \times \Sp_{4}.
$$
\end{example}

\begin{rem}
    The distinction above into the cases $n \not\in I$ and $n \in I$ is very natural if we think on the fact that the Dynkin diagram of $L_I$ is exactly $I$ with the edges between vertices in $I$. Recall that the Dynkin diagram $C_n$ has a double link between the edges $n-1$ and $n$.
    \[
    \dynkin[labels={1,2,n-2,n-1,n},label directions={,,,,,},scale=2.3]C{}
    \]
    Therefore, if $n \not\in I$, then we remove the unique double edge in the Dynkin diagram and we end up with a disjoint union of diagrams of type $A_{\lambda_{j}-1}$, justifying the fact that $L_I$ is a product of groups $\GL_{\lambda_{j}}$ in (\ref{eq:levi-sp-not-n}). Notice that if $n-1 \not\in I$, we still get a union of Dynkin diagram of type $A_{\lambda_{j}-1}$ and $A_1$, and this agrees with the fact that in (\ref{eq:levi-sp-n}) we get $L_I = \prod_{j} \GL_{\lambda_j} \times \Sp_{2}$ and $\Sp_2 = \SL_2$.
\end{rem}

Regarding the action of the Weyl group $W = \ZZ_2^n \rtimes S_n$ on $2^\Delta$, notice that if $n \not\in I$, say $I = \{m_1, \ldots, m_t\}$ with $m_t<n$, then the associated roots are $ \{\beta_{m_1(m_1+1), +-},\ldots, \beta_{m_t(m_t)+1, +-}\}$; and otherwise if $I = \{m_1, \ldots, m_t, n\}$, then the associated roots are $ \{\beta_{m_1(m_1+1), +-},\ldots, \beta_{m_t(m_t)+1, +-}, \alpha_{n,+}\}$. This means that we can decompose $2^\Delta = \Omega_{n} \sqcup \overline{\Omega}_n$, where $\Omega_n$ are the subsets of $\Delta$ containing $n$ and $\overline{\Omega}_n$ those subsets not containing $n$. Observe that both $\Omega_n$ and $\overline{\Omega}_n$ have $2^{n-1}$ elements and the equivalence relation $\sim_W$ can only identify elements in the same stratum.

Hence, the equivalence classes $\overline{\Omega}_n / \sim_W$ are given by unordered partitions $[k] = [1^{k_{1}}\cdots j^{k_{j}}\cdots n^{k_{n}}] \in \mathcal{P}_n$ of $n$, which determine a subset $I_{[k]} \in \overline{\Omega}_n$ giving rise to the Levi subgroup
$$
    L_{[k]} = L_{I_{[k]}} = \prod_{j=1}^n \GL_{j}^{k_j}.
$$
Notice that in this case we have Weyl group $W_{[k]} = \ZZ_2^{|[k]|} \rtimes S_{[k]}$, where we recall that $|[k]|$ is the length of the partition $[k]$, and we have a splitting $N_{[k]} = L_{[k]} \rtimes W_{[k]}$. Therefore, the strata of the character variety corresponding to these Levi subgroups are 
$$
    \mathcal{X}_{L_{[k]}}^\ast \sslash (\ZZ_2^{|[k]|} \rtimes S_{[k]}), 
$$
where $S_{[k]}$ acts by permuting blocks of the same dimension and $\ZZ_2^{|[k]|}$ acts by permuting pairs of blocks $A_j \leftrightarrow (A_j^t)^{-1}$ corresponding to an orthogonal pair of subspaces.

Analogously, the equivalence classes $\Omega_n / \sim_W$ are also parametrized by unordered partitions $[k] = [1^{k_{1}}\cdots j^{k_{j}}\cdots m^{k_{m}}] \in \mathcal{P}_m$ of $m$ for some $m < n$, which determine a subset $I_{n, [k]} = I_{[k]} \cup \{n\} \in \Omega_n$ giving rise to the Levi subgroup
$$
    L_{n,[k]} = L_{I_{[k]}} = \prod_{j=1}^m \GL_{j}^{k_j} \times \Sp_{2(n-m)}.
$$
Again, the Weyl group $W_{n,[k]} = \ZZ_2^{|[k]|} \rtimes S_{[k]}$ and we have a splitting $N_{n,[k]} = L_{n,[k]} \rtimes W_{n,[k]}$. Therefore, the strata of the character variety corresponding to these Levi subgroups are 
$$
    \mathcal{X}_{L_{n,[k]}}^\ast \sslash (\ZZ_2^{|[k]|} \rtimes S_{[k]}).
$$

Therefore, by the results above, we get a decomposition into simpler pieces
\begin{equation}\label{eq:decomposition-Sp}
    \left[ \mathcal{X}_{\Sp_{2n}}(\Gamma) \right] = \sum_{[k]\in\mathcal{P}_{n}} \left[\mathcal{X}_{L_{[k]}}^\ast(\Gamma) \sslash (\ZZ_2^{|[k]|} \rtimes S_{[k]}) \right] + \sum_{m=1}^{n-1} \sum_{[k] \in \mathcal{P}_m} \left[\mathcal{X}_{L_{n,[k]}}^\ast(\Gamma) \sslash (\ZZ_2^{|[k]|} \rtimes S_{[k]}) \right].
\end{equation}
\subsection{Stratification for $G = \SO_{2n+1}$ (Dynkin diagram $B_n$)}\label{sec:stratification-so-odd}

Recall that this case is Langlands dual to $\Sp_{2n}$. Indeed, with the notation of Section \ref{sec:lie-so-odd} and in agreement with the previous case, the simple roots are $\Delta=\{\beta_{12,+ -}, \beta_{23,+-}, \beta_{34,+-},\ldots, \beta_{(n-1) n, +-}, \alpha_{n, +}\}$, the last one being the unique long root. Again, we index them as $\Delta = \{1,\ldots, n\}$ with $j$ corresponding to $\beta_{j(j+1),+-}$ for $1 \leq j \leq n-1$ and $n$ corresponding to $\alpha_{n,+}$.

Recall that we are considering as quadratic form
$$
    Q(a) = a_1a_{n+2} + a_2a_{n+3} + \cdots + a_{n}a_{2n+1} + a_{n+1}^2,
$$
where $a = (a_1, \ldots, a_{2n+1})$. For convenience, let us relabel the standard basis $e_1, \ldots, e_{2n+1}$ of $\CC^{2n+1}$ as
$$
    x_1 = e_1, \ldots, x_{n} = e_{n}, \quad y_1 = e_{n+2}, \ldots, y_{n} = e_{2n+1}, \quad z = e_{n+1}.
$$

Given a subset $I \subseteq \Delta$, let $\Delta \setminus I = \{i_1, \ldots, i_s\}$ with $i_1 < i_2 < \cdots < i_s$. In this situation, the standard parabolic subgroup $P_I$ associated to $I$ is the subgroup of orthogonal matrices stabilizing the flag
$$
    0 \subsetneq V_{1}  \subsetneq \cdots \subsetneq V_{j} \subsetneq \cdots \subsetneq V_{s} \subsetneq \CC^{2n+1}.
$$
where $V_{j} = \langle x_{1}, \ldots, x_{i_j}\rangle$ for $1 \leq j \leq s$ are isotropic subspaces. But, again, the elements of $P_I$ also preserve the orthogonal $V_j^\perp = \{w \in \CC^{2n+1}\,\mid Q(w, v) = 0,\forall v\in V_j\}$, so $P_I$ is also the stabilizer of the flag
\begin{equation}\label{eq:flag-orthogonal-odd}
    0 \subsetneq V_{1} \subsetneq \cdots \subsetneq V_{j} \subsetneq \cdots \subsetneq V_{s} \subsetneq V_{s}^\perp \subsetneq V_{{s-1}}^\perp \subsetneq \cdots \subsetneq V_{1}^\perp \subsetneq \CC^{2n+1}.
\end{equation}
Notice that, explicitly, we have $V_j^\perp = \langle x_1, \ldots, x_n, y_{i_j+1}, \ldots, y_n, z\rangle$.

The Levi subgroup $L_I$ associated to $I$ is the collection of orthogonal maps preserving the graded pieces of the flag (\ref{eq:flag-orthogonal-odd}). Again, we have two situations.
\begin{itemize}
    \item If $n \notin I$, then $\Delta \setminus I = \{i_1, \ldots, i_s = n\}$ with $i_1 < \ldots < i_s = n$. This means that the flag (\ref{eq:flag-orthogonal-odd}) contains the subspace $V_{s} = \langle x_1, \ldots, x_n\rangle$ whose orthogonal is $V_{s}^{\perp} = \langle x_1, \ldots, x_n, z\rangle$. Now, if we set $i_0=0$ and $V_0 = 0$, we can consider the splitting
    $$
        W_j = \langle x_{i_{j-1}+1}, \ldots, x_{i_j}\rangle=V_j / V_{j-1}, \qquad W_j' = \langle y_{i_{j-1}+1}, \ldots, y_{i_j}\rangle=V_{j-1}^{\perp}/V_{j}^{\perp}, \qquad W^\ast = \langle z \rangle = V_{s}^\perp / V_{s}.
    $$
    for $j=1,\ldots, s$. In this manner, since both $W_j$ and $W_j'$ are isotropic spaces, the quadratic form vanishes on them, so the only constraint is that if $A$ acts on $W_j$ as a linear endomorphism $A_j \in \GL_{\lambda_j}$, where $\lambda_j = i_{j} -i_{j-1}$, then $A$ acts on $W_j'$ as $(A_j^t)^{-1}$. Moreover, since the orthogonal map is orientation preserving, it must act trivially on $W^\ast$. Hence, we get
    $$
        L_I \cong \prod_{j=1}^s \GL_{\lambda_j},
    $$
    where the isomorphism is given by $\prod_{j=1}^s \GL_{\lambda_j} \ni (A_1, \ldots, A_s) \mapsto (A_1, \ldots, A_s, (A_s^t)^{-1}, \ldots, (A_1^t)^{-1}) \in L_I$.

    \item If $n \in I$, then $\Delta \setminus I = \{i_1, \ldots, i_s\}$ with $i_s < n$. This means that the flag (\ref{eq:flag-orthogonal-odd}) does not contain the subspace $\langle x_1, \ldots, x_n\rangle$. The splitting of (\ref{eq:flag-orthogonal-odd}) is now given again by
    $$
        W_j = \langle x_{i_{j-1}+1}, \ldots, x_{i_j}\rangle, \qquad W_j' = \langle y_{i_{j-1}+1}, \ldots, y_{i_j}\rangle, \qquad    W^\star = \langle x_{i_s+1}, \ldots, x_n, z,  y_{i_s+1}, \ldots, y_n\rangle,
    $$
    for $1 \leq j \leq s$.
    Then, $L_I$ is the subspace of orthogonal maps preserving $W_j$ and $W_j'$ for all $1 \leq j \leq s$, as well as $W^\star$. Now, the restriction of the quadratic form to $W^\star$ gives again an euclidean space of dimension $2(n-i_s)+1$. Hence, we get
    $$
        L_I \cong \prod_{j=1}^s \GL_{\lambda_j} \times \SO_{2(n-i_s)+1},
    $$
    where $\lambda_j = i_{j} -i_{j-1}$.
\end{itemize}

\begin{rem}
    Again, the special cases where $\{n-1, n\} \subseteq I$ can be seen from the fact that the double edge survives in the Dynkin diagram $I$ of $L_I$.
    \[
    \dynkin[labels={1,2,n-2,n-1,n},label directions={,,,,,},scale=2.3]B{}
    \]
\end{rem}

Regarding the Weil group, as in Section \ref{sec:lie-so-odd}, we have that $W = \ZZ_2^n \rtimes S_n$. Again, we can decompose subsets of simple roots as $2^\Delta = \Omega_{n} \sqcup \overline{\Omega}_n$, where $\Omega_n$ are the subsets of $\Delta$ containing $n$ and $\overline{\Omega}_n$ those subsets not containing $n$. 

As in the previous case, the equivalence classes of $\overline{\Omega}_n / \sim_W$ are given by unordered partitions $[k] = [1^{k_{1}}\cdots j^{k_{j}}\cdots n^{k_{n}}] \in \mathcal{P}_n$ of $n$, which determine a subset $I_{[k]} \in \overline{\Omega}_n$ giving rise to the Levi subgroup
$$
    L_{[k]} = L_{I_{[k]}} = \prod_{j=1}^n \GL_{j}^{k_j}.
$$
The strata of the character variety corresponding to these Levi subgroups are thus
$$
    \mathcal{X}_{L_{[k]}}^\ast \sslash (\ZZ_2^{|[k]|} \rtimes S_{[k]}), 
$$
where $S_{[k]}$ acts by permuting blocks of the same dimension and $\ZZ_2^{|[k]|}$ acts by permuting pairs of blocks $A_j \leftrightarrow (A_j^t)^{-1}$ corresponding to an orthogonal pair of subspaces.

Again, equivalence classes $\Omega_n / \sim_W$ are parametrized by unordered partitions $[k] = [1^{k_{1}}\cdots j^{k_{j}}\cdots m^{k_{m}}] \in \mathcal{P}_m$ of $m$ for some $m < n$, which determine a subset $I_{n, [k]} = I_{[k]} \cup \{n\} \in \Omega_n$ giving rise to the Levi subgroup
$$
    L_{n,[k]} = L_{I_{[k]}} = \prod_{j=1}^m \GL_{j}^{k_j} \times \SO_{2(n-m)+1}.
$$
The strata of the character variety corresponding to these Levi subgroups are 
$$
    \mathcal{X}_{L_{n,[k]}}^\ast \sslash (\ZZ_2^{|[k]|} \rtimes S_{[k]}).
$$
Therefore, by the results above, we get a decomposition of the character variety into simpler pieces
\begin{equation}\label{eq:decomposition-SO-odd}
    \left[ \mathcal{X}_{\SO_{2n+1}}(\Gamma) \right] = \sum_{[k]\in\mathcal{P}_{n}} \left[\mathcal{X}_{L_{[k]}}^\ast(\Gamma) \sslash (\ZZ_2^{|[k]|} \rtimes S_{[k]}) \right] + \sum_{m=1}^{n-1} \sum_{[k] \in \mathcal{P}_m} \left[\mathcal{X}_{L_{n,[k]}}^\ast(\Gamma) \sslash (\ZZ_2^{|[k]|} \rtimes S_{[k]}) \right].
\end{equation}

\begin{rem}
    Decomposition (\ref{eq:decomposition-SO-odd}) for $\SO_{2n+1}$ is completely analogous to decomposition (\ref{eq:decomposition-Sp}) for $\Sp_{2n}$, as predicted by the fact that these groups are Langlands duals. In particular, the $2^{n-1}$ terms corresponding to the strata $L_{[k]}$ are literal equal in both cases. For the other terms, if we denote $\GL_{[k]} = \prod_{j=1}^m \GL_{j}^{k_j}$, then a classical homological mirror symmetry statement reduces to show that the strata
    $$
    \mathcal{X}_{\GL_{[k]} \times \Sp_{2(n-m)}}^\ast(\Gamma) \sslash (\ZZ_2^{|[k]|} \rtimes S_{[k]})\quad \textrm{and}\quad \mathcal{X}_{\GL_{[k]}\times \SO_{2(n-m)+1}}^\ast(\Gamma) \sslash (\ZZ_2^{|[k]|} \rtimes S_{[k]}).
    $$
    have related $E$-polynomials.
\end{rem}

\subsection{Stratification for $G = \SO_{2n}$ (Dynkin diagram $D_n$)} With the notation of Section \ref{sec:lie-so-even}, the simple roots are $\Delta=\{\beta_{12,+ -}, \beta_{23,+-}, \beta_{34,+-},\ldots, \beta_{(n-1) n, +-}, \beta_{(n-1) n, ++}\}$, the last two roots being the branching in the Dynkin diagram. As always, we relabel them as $\Delta = \{1, \ldots, n\}$ with $j$ corresponding to $\beta_{j(j+1),+-}$ for $j \leq n-1$ and $n$ corresponding to $\beta_{(n-1) n, ++}$.

In this case, recall that we are considering as quadratic form
$$
    Q(a) = a_1a_{n+1} + a_2a_{n+2} + \cdots + a_{n}a_{2n},
$$
where $a = (a_1, \ldots, a_{2n})$. We relabel the standard basis $e_1, \ldots, e_{2n}$ as
$$
    x_1 = e_1, \ldots, x_{n} = e_{n}, \quad y_1 = e_{n+1}, \ldots, y_{n} = e_{2n}.
$$

Recall that the main feature in this case is that there no longer exists a bijection between parabolic subgroups of $\SO_{2n}$ and isotropic flags, so a modification of the description in Section \ref{sec:stratification-so-odd} must be taken into account. Following \cite{conrad2020reductive}, the main difficulty is the following. Suppose that we have a flag
$$
    F: 0 \subsetneq V_{1}  \subsetneq \cdots \subsetneq V_{j} \subsetneq \cdots \subsetneq V_{s} \subsetneq \CC^{2n},
$$
where each $V_j$ is isotropic and $\dim V_{s} = n-1$. Now, notice that $V_s^\perp/V_s$ is an euclidean plane and thus contains exactly two (different) isotropic lines. Hence, $V_s$ can be completed intro an $n$-dimensional isotropic space in two different ways, let us call them $V'$ and $V''$. Now, the trouble appears from the fact that the stabilizer of the flag $F$ coincides with the stabilizer of the flags
$$
    F': 0 \subsetneq V_{1}  \subsetneq \cdots \subsetneq V_{j} \subsetneq \cdots \subsetneq V_{s} \subseteq V' \subsetneq \CC^{2n},
$$
$$
    F'': 0 \subsetneq V_{1}  \subsetneq \cdots \subsetneq V_{j} \subsetneq \cdots \subsetneq V_{s} \subseteq V'' \subsetneq \CC^{2n}.
$$
On the other hand, if we remove the $V_s$ space of dimension $n-1$, then the two resulting flags
$$
    \tilde{F}': 0 \subsetneq V_{1}  \subsetneq \cdots \subsetneq V_{j} \subsetneq \cdots \subsetneq V_{s-1} \subseteq V' \subsetneq \CC^{2n},
$$
$$
    \tilde{F}'': 0 \subsetneq V_{1}  \subsetneq \cdots \subsetneq V_{j} \subsetneq \cdots \subsetneq V_{s-1} \subseteq V'' \subsetneq \CC^{2n},
$$
induce different parabolic subgroups that are not conjugated.

There are several way of circumventing this issue. Consider a subset $I \subseteq \Delta$ and let $\Delta \setminus I = \{i_1, \ldots, i_s\}$. Then we have the following.
\begin{enumerate}
    \item If $n \in I$, then $i_s \leq n-1$. In this case, $P_I$ is just the parabolic subgroup which is the stabilizer of the flag
    $$
        0 \subsetneq V_{1} \subsetneq \cdots \subsetneq V_{j} \subsetneq \cdots \subsetneq V_{s-1} \subsetneq V_{s} \subsetneq V_{s}^\perp \subsetneq V_{{s-1}}^\perp \subsetneq \cdots \subsetneq V_{1}^\perp \subsetneq \CC^n.
    $$
    Here, $V_j = \langle x_1, \ldots, x_{i_j}\rangle$ and thus its orthogonal complement is $V_j^\perp = \langle x_1, \ldots, x_n, y_{i_j+1}, \ldots, y_n\rangle$. We can easily compute the associated Levi subgroup from this, since
    $$
        W_j = \langle x_{i_{j-1}+1}, \ldots, x_{i_j}\rangle, \qquad W_j' = \langle y_{i_{j-1}+1}, \ldots, y_{i_j}\rangle, \qquad  W^\star = \langle x_{i_s+1}, \ldots, x_n, y_{i_s+1}, \ldots, y_n\rangle
    $$
    for $1 \leq j \leq s$ provide a splitting of the flag. Therefore, we get that the associated Levi subgroup is
    $$
        L_I \cong \prod_{j=1}^s \GL_{\lambda_j} \times \SO_{2(n-i_s)},
    $$
    where $\lambda_j = i_{j} -i_{j-1}$.
    \item If $n \not\in I$ and $n-1 \in I$, then $i_{s-1} < n-1$ and $i_s = n$. Again, $P_I$ is the stabilizer of the flag
    $$
        0 \subsetneq V_{1} \subsetneq \cdots \subsetneq V_{j} \subsetneq \cdots \subsetneq V_{s-1} \subsetneq V_{s} = V_{s}^\perp \subsetneq V_{{s-1}}^\perp \subsetneq \cdots \subsetneq V_{1}^\perp \subsetneq \CC^n,
    $$
    where $V_j = \langle x_1, \ldots, x_{i_j}\rangle$. However, since $V_s = \langle x_1, \ldots, x_n\rangle = V_s^\perp$, we have that a splitting is just
    $$
        W_j = \langle x_{i_{j-1}+1}, \ldots, x_{i_j}\rangle, \qquad W_j' = \langle y_{i_{j-1}+1}, \ldots, y_{i_j}\rangle,
    $$
    for $1 \leq j \leq s$. Thus, the associated Levi subgroup is
    $$
        L_I \cong \prod_{j=1}^s \GL_{\lambda_j},
    $$
    where $\lambda_j = i_{j} -i_{j-1}$.
    \item If $n \not\in I$ and $n-1 \not\in I$, then $i_{s-1} = n-1$ and $i_s = n$. This is the case that must be modified to get a different parabolic subgroup. Now, we consider $V_j = \langle x_1, \ldots, x_{i_j}\rangle$ for $j < s-1$ and set $V_s' = \langle x_1, \ldots, x_{n-1}, y_n \rangle$, which is a self-orthogonal subspace. We get that $P_I$ is the stabilizer of the flag
    $$
        0 \subsetneq V_{1} \subsetneq \cdots \subsetneq V_{j} \subsetneq \cdots \subsetneq V_{s-2} \subsetneq V_s' = V_{s}'^\perp \subsetneq V_{{s-2}}^\perp \subsetneq \cdots \subsetneq V_{1}^\perp \subsetneq \CC^n.
    $$
    Notice that we removed $V_{s-1}$ from the flag (if we kept it, the stabilizer would coincide with the one of $I \cup \{n\}$ as in the case (1)). Now, the splitting of the flag is given by
    $$
        W_j = \langle x_{i_{j-1}+1}, \ldots, x_{i_j}\rangle, \qquad W_j' = \langle y_{i_{j-1}+1}, \ldots, y_{i_j}\rangle,
    $$
    for $j \leq s-2$, as well as the spaces
    $$
        W^\ast = \langle x_{i_{s-2}+1}, \ldots, x_{n-1}, y_{n}, \rangle, \qquad W^{\ast\ast} = \langle x_{n}, y_{i_{s-2}+1}, \ldots, y_{n-1} \rangle.
    $$
    The situation is thus the same as in (2), so the associated Levi subgroup is
    $$
        L_I \cong \prod_{j=1}^{s-1} \GL_{\lambda_j},
    $$
    where $\lambda_j = i_{j} -i_{j-1}$ for $j \leq s-2$ and $\lambda_{s-1} = i_s - i_{s-2}$.
\end{enumerate}

\begin{rem}
    The division in the cases above can be easily understood from the shape of the Dynkin diagram $D_n$, which is a chain of the roots $1, 2, \ldots, n-2$ branching at this last vertex so that $n-2$ is connected to both the roots $n-1$ and $n$. 
    \[
    \dynkin[labels={1,2,n-3,n-2,n-1,n},label directions={,,,right,,},scale=2.3]D{}
    \]
    In this manner, if we remove $n$ or $n-1$ from $I \subseteq \Delta$, the resulting Dynkin diagram is a disjoin union of diagrams of type $A_{\lambda_j-1}$, agreeing with the shape of the Levi subgroup. The only genuinely new Levi subgroups arise when $\{n-1,n\} \subseteq I$, corresponding to case (1), in which a new factor $\SO_{2m}$ appears with $m \geq 2$.
\end{rem}

Now, let us split $2^{\Delta} = \Omega_{(1)} \sqcup \Omega_{(2)} \sqcup \Omega_{(3)}$, where $\Omega_{(1)} = \{I \subseteq \Delta\,\mid\, n \in I\}$, $\Omega_{(2)} = \{I \subseteq \Delta\,\mid\, n \not\in I, n-1 \in I\}$ and $\Omega_{(3)} = \{I \subseteq \Delta\,\mid\, n \not\in I, n-1 \not\in I\}$, corresponding to the cases (1), (2) and (3) above respectively. Notice that $|\Omega_{(1)}| = 2^{n-1}$ and $|\Omega_{(2)}| = |\Omega_{(3)}| = 2^{n-2}$. The equivalence relation $\sim_W$ with respect to the Weyl group preserves this stratification. 

For $\Omega_{(1)}$, each element of $\Omega_{(1)} / \sim_W$ is characterized by an unordered partition $[k] = [1^{k_{1}}\cdots j^{k_{j}}\cdots m^{k_{m}}] \in \mathcal{P}_m$ of $m$ for some $m < n$, which determine a subset $I_{(1), [k]} = I_{[k]} \cup \{n\} \in \Omega_{(1)}$ giving rise to the Levi subgroup
$$
    L_{(1),[k]} = L_{I_{(1), [k]}} = \prod_{j=1}^m \GL_{j}^{k_j} \times \SO_{2(n-m)}.
$$

For $\Omega_{(2)}$, the situation is analogous since $\Omega_{(2)} /\sim_W$ is characterized by unordered partitions $[k] = [1^{k_{1}}\cdots j^{k_{j}}\cdots m^{k_{m}}] \in \mathcal{P}_m$ of $m$ for some $m < n$. With this partition, we form the subset $I_{(2), [k]} = I_{[k]} \cup \{n-1\} \in \Omega_{(2)}$ giving rise to the Levi subgroup
$$
    L_{(2),[k]} = L_{I_{(2), [k]}} = \prod_{j=1}^m \GL_{j}^{k_j} \times \GL_{n-m}.
$$

Finally, the subset $\Omega_{(3)}$ is slightly different. Since we must remove $n-1$ from $\Delta \setminus I$, we can characterize an element of $\Omega_{(3)} / \sim_W$ by an unordered partition $[k] = [1^{k_{1}}\cdots j^{k_{j}}\cdots m^{k_{m}}] \in \mathcal{P}_{m}$ of $m < n-1$, which determines a subset $I_{(3),[k]} \in \overline{\Omega}_{(3)}$ and the Levi subgroup
$$
    L_{(3),[k]} = L_{I_{(3), [k]}} = \prod_{j=1}^m \GL_{j}^{k_j} \times \GL_{n-m}.
$$
Notice that $n-m \geq 2$ since $m < n-1$. In the three cases, the associated Weyl group is $H_{|[k]|} \rtimes S_{[k]}$, where $H_{|[k]|}$ is subgroup of $\ZZ_2^{|[k]|}$ with even number of non-identity elements and $S_{[k]} = \prod_j S_{k_j}$.

Therefore, by the previous results, we get the decomposition 
\begin{align*}
    \left[ \mathcal{X}_{\SO_{2n}}(\Gamma) \right] &= \sum_{m = 1}^{n-1}\sum_{[k]\in\mathcal{P}_{m}} \left(\left[\mathcal{X}_{L_{(1),[k]}}^\ast(\Gamma) \sslash (H_{|[k]|} \rtimes S_{[k]}) \right] + \left[\mathcal{X}_{L_{(2),[k]}}^\ast(\Gamma) \sslash (H_{|[k]|} \rtimes S_{[k]}) \right]\right) \\
    &+ \sum_{m=1}^{n-2} \sum_{[k] \in \mathcal{P}_m} \left[\mathcal{X}_{L_{(3),[k]}}^\ast(\Gamma) \sslash (H_{|[k]|} \rtimes S_{[k]}) \right].
\end{align*}

\bibliographystyle{plain} %
\bibliography{bibliography}

\end{document}